\documentclass[12pt]{amsart}
\usepackage{graphicx}
\usepackage{here}


\newtheorem{theorem}[subsection]{Theorem}
\newtheorem{lemma}[subsection]{Lemma}

\newtheorem{question}[subsection]{Question}
\newtheorem{remark}[subsection]{Remark}

\theoremstyle{definition}
\newtheorem{definition}[subsection]{Definition}

\title{Minimal coloring number for $\mathbb{Z}$-colorable links}

\author{Kazuhiro Ichihara}
\address{Department of Mathematics, 
College of Humanities and Sciences, Nihon University,
3-25-40 Sakurajosui, Setagaya-ku, Tokyo 156-8550, Japan}
\email{ichihara@math.chs.nihon-u.ac.jp}

\author{Eri Matsudo}
\address{Graduate School of Integrated Basic Sciences, Nihon University,
3-25-40 Sakurajosui, Setagaya-ku, Tokyo 156-8550, Japan}
\email{cher16001@g.nihon-u.ac.jp}

\subjclass[2010]{57M25}
\keywords{$\mathbb{Z}$-coloring, link}
\date{\today}

\begin{document}

\maketitle


\begin{abstract}
For a link with zero determinants, a $\mathbb{Z}$-coloring is defined as a generalization of Fox coloring. 
We call a link having a diagram which admits a non-trivial $\mathbb{Z}$-coloring a \textit{$\mathbb{Z}$-colorable link}. 
The \textit{minimal coloring number} of a $\mathbb{Z}$-colorable link is the minimal number of colors for non-trivial $\mathbb{Z}$-colorings on diagrams of the link.
We give sufficient conditions for non-splittable $\mathbb{Z}$-colorable links to have the least minimal coloring number.
\end{abstract}

\section{Introduction}

In \cite{Fox}, Fox introduced one of the most well-known invariants for knots and links, 
now called \textit{the Fox $n$-coloring}, or simply $n$-coloring for $n\ge 2$. 

For a link $L$, if the determinant of $L$ is 0, 
then it is shown that $L$ admits no non-trivial $n$-coloring for any $n\ge 2$. 
Please refer to \cite{IchiharaMatsudo} for the definition of the determinant of a link for example. 
In that case, $L$ admits a $\mathbb{Z}$-coloring defined as follows.

\begin{definition}\label{def1}
Let $L$ be a link and $D$ a regular diagram of $L$. 
We consider a map $\gamma:\{$arc of $D\}\rightarrow \mathbb{Z}$. 
If $\gamma$ satisfies the condition 
$2\gamma(a)= \gamma(b)+\gamma(c)$ 
at each crossing of $D$ 
with the over arc $a$ and the under arcs $b$ and $c$, 
then $\gamma$ is called 
a \textit{$\mathbb{Z}$-coloring} on $D$. 
A $\mathbb{Z}$-coloring which assigns the same color to all the arcs of the diagram 
is called the \textit{trivial $\mathbb{Z}$-coloring}. 
A link is called \textit{$\mathbb{Z}$-colorable} if it has a diagram admitting a non-trivial
$\mathbb{Z}$-coloring. 
\end{definition}

The links illustrated in Figure \ref{L8n6ex} and \ref{pretzelex} are examples those are $\mathbb{Z}$-colorable.
Throughout this paper, we adopt the names of links as those given in \cite{linkinfo}.

\begin{center}
\begin{figure}[H]
\includegraphics[width=8cm,clip,bb = 0 0 476 355]{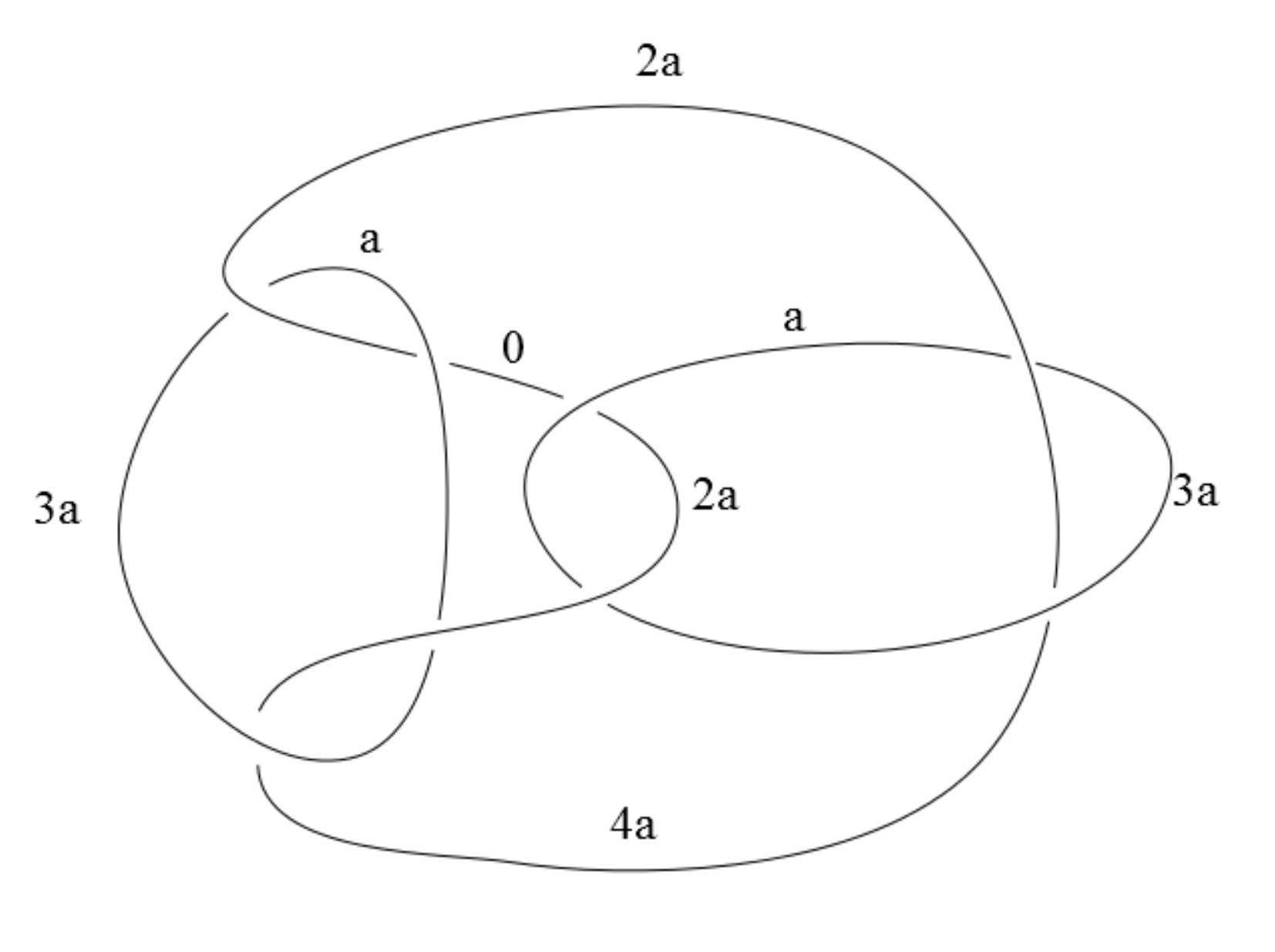}
\caption{$L8n6 \ (a\ge1)$}\label{L8n6ex}
\includegraphics[width=10cm,clip, bb= 0 0 600 450]{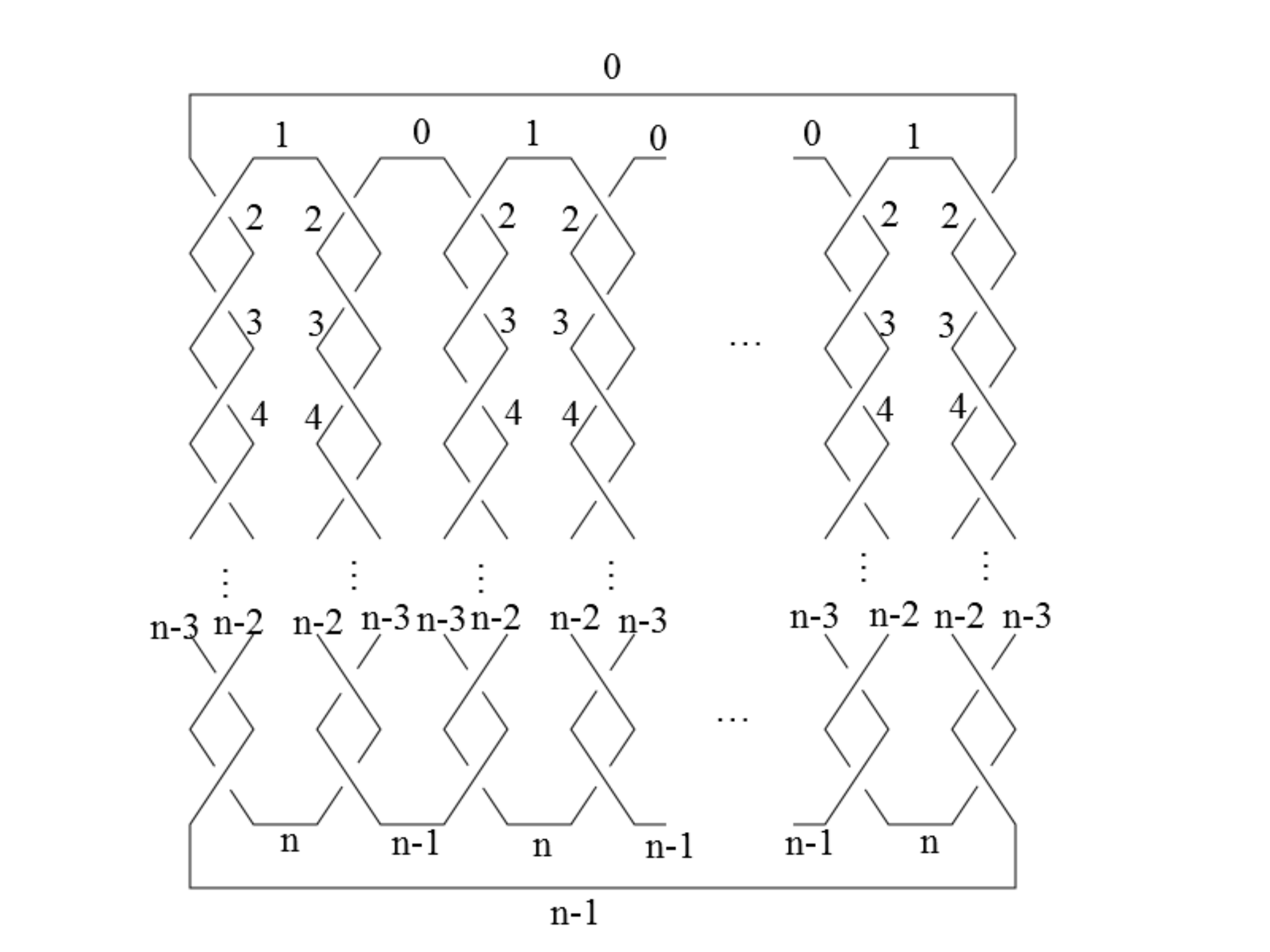}
\caption{Pretzel link $P(n,-n,\cdots,n,-n)$ for $n\ge1$}\label{pretzelex}
\end{figure}
\end{center}

In \cite{HararyKauffman}, 
Harary and Kauffman defined the minimal coloring number for links with Fox colorings. 
Here we define 
the minimal coloring number for $\mathbb{Z}$-colorable links as a generalization.

\begin{definition}\label{def2}
Let us consider the cardinality of the image of 
a non-trivial $\mathbb{Z}$-coloring on a diagram 
of a $\mathbb{Z}$-colorable link $L$. 
We call the minimum of such cardinalities among 
all non-trivial $\mathbb{Z}$-colorings on all diagrams of $L$ 
the \textit{minimal coloring number} of $L$, 
and denote it by $mincol_\mathbb{Z}(L)$. 
\end{definition}

In this paper, we give sufficient conditions for non-splittable $\mathbb{Z}$-colorable links to have the least minimal coloring number. 
First, after some preliminaries in Section \ref{sec:pre}, for a non-splittable $\mathbb{Z}$-colorable link $L$, 
we show that $mincol_\mathbb{Z}(L)\geq4$ in Section \ref{sec:4colors}. 
Next, we introduce a \textit{simple} $\mathbb{Z}$-coloring in Section \ref{sec:simple}, and show that if a link $L$ admits a simple $\mathbb{Z}$-coloring, then $mincol_\mathbb{Z}(L)=4$. 
Also, in the case that a link $L$ admits a $\mathbb{Z}$-coloring by five colors, then we have $mincol_\mathbb{Z}(L)=4$, which is shown in Section \ref{sec:five}.


\section{Preliminaries}\label{sec:pre}

Throughout this paper, 
for a $\mathbb{Z}$-coloring $\gamma$, 
we represent that \textit{the colors at a crossing are $\{a|b|c\}$} 
if the over arc is colored by $b$ and the under arcs are colored by $a$ 
and $c$ by $\gamma$ at the crossing. 

In this section, we prepare some basic 
properties of $\mathbb{Z}$-colorings.

\begin{lemma}\label{lem0}
For any $\mathbb{Z}$-colorable link, 
there exists a $\mathbb{Z}$-coloring $\gamma$ 
such that \textit{Im}$(\gamma)=\{0,a_1,a_2,\cdots,a_n\}$ 
with $a_i >0$ $(i=1,2,\cdots,n)$ 
for some positive integer $n$.
\end{lemma}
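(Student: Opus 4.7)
The plan is to start with an arbitrary non-trivial $\mathbb{Z}$-coloring of a diagram of $L$ and modify it by a constant shift so that its minimum value is $0$.

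First I would observe that the defining relation $2\gamma(a) = \gamma(b) + \gamma(c)$ is invariant under adding a constant: if $\gamma$ is a $\mathbb{Z}$-coloring and $k \in \mathbb{Z}$, then $\gamma + k$ (defined by $(\gamma+k)(x) = \gamma(x) + k$ on every arc $x$) satisfies $2(\gamma(a)+k) = (\gamma(b)+k) + (\gamma(c)+k)$ at each crossing, so $\gamma + k$ is again a $\mathbb{Z}$-coloring. Moreover, $\gamma + k$ is non-trivial if and only if $\gamma$ is non-trivial, since adding a constant does not collapse distinct values.

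Next, since $L$ is $\mathbb{Z}$-colorable, choose a diagram $D$ admitting a non-trivial $\mathbb{Z}$-coloring $\gamma_0$. Because $D$ has finitely many arcs, $\text{Im}(\gamma_0)$ is a finite subset of $\mathbb{Z}$, say $\{m_0 < m_1 < \cdots < m_n\}$ with $n \geq 1$ (the case $n = 0$ is excluded by non-triviality). Set $\gamma := \gamma_0 - m_0$. By the observation above $\gamma$ is a non-trivial $\mathbb{Z}$-coloring on $D$, and its image is $\{0, m_1 - m_0, m_2 - m_0, \ldots, m_n - m_0\}$, where each $a_i := m_i - m_0$ is strictly positive for $i = 1, \ldots, n$. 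Setting this $\gamma$ as the desired coloring yields the statement.

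There is essentially no serious obstacle here; the only thing to verify carefully is the translation invariance of the coloring condition, which is immediate from the symmetry of the relation $2a = b + c$ under simultaneous additive shift of all three values. The lemma simply normalizes a $\mathbb{Z}$-coloring so that $0$ is its minimum value, which will be convenient for later counting arguments about the image size.
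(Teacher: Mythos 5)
Your proposal is correct and follows essentially the same route as the paper: subtract the minimum value of an arbitrary non-trivial $\mathbb{Z}$-coloring and check that the crossing relation $2a=b+c$ is preserved under a simultaneous constant shift. The only (harmless) difference is that you explicitly note the preservation of non-triviality and the finiteness of the image, which the paper leaves implicit.
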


\begin{proof}
We assume that there exists 
a $\mathbb{Z}$-coloring $\gamma'$ such that \textit{Im}$(\gamma')=$
$\{\alpha,b_1,b_2,\cdots,b_n\}$ 
with $b_i >\alpha$ $(i=1,2,\cdots,n)$ for some positive integer $n$.  
We consider a map $\gamma:\{$arc of the diagram$\}\rightarrow \mathbb{Z}$ with \textit{Im}$(\gamma)=\{0,a_1,a_2,\cdots,a_n\}$ obtained by setting $a_i=b_i-\alpha$. 
Then at a crossing on a diagram of the link, the colors $\{p|q|r\}$ 
are transformed to $\{p-\alpha|q-\alpha|r-\alpha\}$. 
We see $(p-\alpha)+(r-\alpha)=2(q-\alpha)$ from $p+r=2q$. 
Therefore the map $\gamma$ is a $\mathbb{Z}$-coloring such that \textit{Im}$(\gamma)=\{0,a_1,a_2,\cdots,a_n\}$ 
with $a_i >0$ $(i=1,2,\cdots,n)$.
\end{proof}

\begin{lemma}\label{lem1}
For a $\mathbb{Z}$-coloring $\gamma$ with $0=\min$\textit{Im}$(\gamma)$,
if an over arc at a crossing is colored by $0$, 
then the under arcs at the crossing are colored by $0$.
\end{lemma}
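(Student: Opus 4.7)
The plan is to argue directly from the defining equation of a $\mathbb{Z}$-coloring at the crossing, combined with the hypothesis that $0 = \min\,\text{Im}(\gamma)$.

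First I would set up notation: let the crossing in question have over arc $a$ and under arcs $b$ and $c$, with $\gamma(a) = 0$ by assumption. The $\mathbb{Z}$-coloring condition from Definition \ref{def1} gives $2\gamma(a) = \gamma(b) + \gamma(c)$, which immediately reduces to $\gamma(b) + \gamma(c) = 0$, i.e.\ $\gamma(c) = -\gamma(b)$.

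Next I would use the minimality hypothesis. Since $0 = \min \text{Im}(\gamma)$, every value in the image is nonnegative, so in particular $\gamma(b) \ge 0$ and $\gamma(c) \ge 0$. Combined with $\gamma(c) = -\gamma(b)$, this forces $\gamma(b) \le 0$ as well, hence $\gamma(b) = 0$, and then $\gamma(c) = 0$.

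The main obstacle is essentially nothing: the statement is a one-line consequence of the averaging relation at a crossing together with the sign constraint imposed by the normalization $\min\,\text{Im}(\gamma)=0$ provided by Lemma \ref{lem0}. The only point worth flagging is that this argument relies on having \emph{already} normalized so that $0$ is the minimum (rather than merely being in the image); without the sign restriction, $\gamma(b) + \gamma(c) = 0$ would of course allow nonzero solutions.
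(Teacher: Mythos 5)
Your proof is correct and takes essentially the same route as the paper: both arguments combine the crossing relation $2\gamma(a)=\gamma(b)+\gamma(c)$ with the normalization $0=\min\mathrm{Im}(\gamma)$, the only difference being that you argue directly ($\gamma(b)+\gamma(c)=0$ with both terms nonnegative) while the paper phrases it as a contradiction via the ordering of the three colors at a crossing.
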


\begin{proof}
Suppose that a crossing has colors 
$\{a|b|c\}$ with $a\neq b\neq c$ for a $\mathbb{Z}$-coloring $\gamma$ with $0=\min$\textit{Im}$(\gamma)$.
Then we obtain $a>b>c$ or $c>b>a$. 
If we suppose that $b=0$, then it gives $0>c$ or $0>a$, 
contradicting to $0=\min$\textit{Im}$(\gamma)$.
\end{proof}

\begin{lemma}\label{lem2}
For a $\mathbb{Z}$-coloring $\gamma$ with $M=\max$ \textit{Im}$(\gamma)$,
if an over arc at a crossing is colored by $M$, 
then the under arcs at the crossing are colored by $M$.
\end{lemma}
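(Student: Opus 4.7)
The statement of Lemma \ref{lem2} is exactly dual to Lemma \ref{lem1}: instead of looking at the minimum of the image, we look at the maximum. My plan is to mirror the argument of Lemma \ref{lem1} almost verbatim, exploiting the fact that the defining relation $2\gamma(a)=\gamma(b)+\gamma(c)$ forces the over-arc's color to be the average, hence to lie strictly between the two under-arc colors whenever the crossing is not monochromatic.

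More precisely, I would begin by supposing, for contradiction, that at some crossing the colors are $\{a\,|\,b\,|\,c\}$ with the over arc colored $b=M$ and at least one of the under arcs colored differently from $M$. The $\mathbb{Z}$-coloring condition gives $2b=a+c$. A short case analysis on the triple $(a,b,c)$ shows that if the three colors are not all equal, then (using the condition $a\neq b\neq c$ exactly as in the proof of Lemma \ref{lem1}) either $a>b>c$ or $c>b>a$. In either event one of $a$ or $c$ strictly exceeds $b=M$, which contradicts $M=\max \textit{Im}(\gamma)$. Hence the under arcs must both be colored by $M$.

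The only small wrinkle, and the one step that needs a line of justification, is ruling out the intermediate case where one under arc equals $b=M$ and the other does not: but the relation $2M=M+c$ forces $c=M$ as well, so this case is immediate. The rest is a direct transposition of the proof of Lemma \ref{lem1} with inequalities reversed. I do not anticipate any real obstacle; the lemma is essentially a one-line observation, and the main thing to be careful about in the write-up is making the case analysis parallel to the proof of Lemma \ref{lem1} so that the symmetry between the two statements is transparent.
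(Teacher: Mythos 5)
Your proposal is correct and follows essentially the same argument as the paper: at a non-monochromatic crossing the over-arc color $b=(a+c)/2$ lies strictly between the under-arc colors, so $b=M$ would force one of $a,c$ to exceed $M$. The extra remark about the case where exactly one under arc equals $M$ is a harmless (and slightly more careful) addition that the paper folds into the same inequality argument.
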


\begin{proof}
Suppose that a crossing has colors 
$\{a|b|c\}$ with $a\neq b\neq c$ for a $\mathbb{Z}$-coloring $\gamma$ with $M=\max$\textit{Im}$(\gamma)$.
Then we obtain $a>b>c$ or $c>b>a$. 
If we suppose that $b=M$, then it gives $M<c$ or $M<a$, 
contradicting to $M=\max$\textit{Im}$(\gamma)$.
\end{proof}


\section{$\mathbb{Z}$-colorings by four colors}\label{sec:4colors}

In this section, we consider the case that a link can be colored by four colors. 
First, we can see that 
any splittable link $L$ is $\mathbb{Z}$-colorable and $mincol_\mathbb{Z}(L)=2$. 
On the other hand, the next holds for non-splittable links.

\begin{theorem}\label{thm1}
Let $L$ be a non-splittable $\mathbb{Z}$-colorable link. 
Then $mincol_\mathbb{Z}(L)\geq4$.
\end{theorem}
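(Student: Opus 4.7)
The plan is to prove the contrapositive: if a $\mathbb{Z}$-colorable link $L$ admits a $\mathbb{Z}$-coloring $\gamma$ with $|\textit{Im}(\gamma)|\le 3$, then $L$ is splittable. By Lemma \ref{lem0} I may assume $\min \textit{Im}(\gamma)=0$, so $\textit{Im}(\gamma)$ equals either $\{0,a\}$ with $a>0$, or $\{0,a,b\}$ with $0<a<b$.

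The strategy is to enumerate the possible colorings at crossings in each case. For two colors, Lemmas \ref{lem1} and \ref{lem2} applied to $0$ and $a$ force every crossing to be monochromatic, since the only pair in $\{0,a\}$ summing to $2a$ is $(a,a)$. For three colors, Lemmas \ref{lem1} and \ref{lem2} restrict over-arcs colored $0$ or $b$; at an over-arc colored $a$, the under-arcs must sum to $2a$, and the only options inside $\{0,a,b\}$ are $(a,a)$, plus $\{0,2a\}$ precisely when $b=2a$.

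When every crossing is monochromatic, the sub-diagrams determined by the individual colors have pairwise disjoint projections, so the corresponding sub-links lie in disjoint regions of the plane and $L$ admits a splitting sphere in $S^3$. In the remaining three-color case $b=2a$, I would use the undercrossing transition rule $c\mapsto 2c_{\mathrm{over}}-c$, together with Lemmas \ref{lem1} and \ref{lem2}, to show that each component of $L$ is either entirely $a$-colored or has all its arcs in $\{0,2a\}$: starting from an $a$-arc, the only admissible over-color at the next undercrossing is $a$ itself, so the color cannot escape $\{a\}$. This gives a decomposition $L=L_a\sqcup L_{02}$ with both sub-links non-empty. At any crossing where the two sub-links meet, the under-arcs are $0$ and $2a$, which forces the over-arc to be $a$; in particular, $L_a$ lies over $L_{02}$ at every mutual crossing.

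The final step is to convert the condition ``$L_a$ always over $L_{02}$'' into an actual splitting of $L$. I would realize $L$ in $\mathbb{R}^3$ by placing $L_{02}$ in a thin slab about the plane $z=0$ and $L_a$ in a thin slab about $z=1$, with only small perturbations needed to record the self-crossings inside each sub-link. The required over/under behaviour at mutual crossings is then automatic from the heights, so this realization is ambient isotopic to $L$, and the plane $z=1/2$, compactified in $S^3$, gives a separating sphere. This final ``height-function'' step is the part I expect to require the most care; everything else reduces to routine case analysis using Lemmas \ref{lem0}, \ref{lem1}, and \ref{lem2}.
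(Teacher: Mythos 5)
Your proposal is correct and follows essentially the same route as the paper: reduce to $\min\textit{Im}(\gamma)=0$ via Lemma \ref{lem0}, use Lemmas \ref{lem1} and \ref{lem2} to force all crossings to be monochromatic except possibly $\{0|a|2a\}$, and conclude splittability. The only difference is that you spell out the final step (the $a$-colored sublink always crossing over the rest, hence liftable to a higher level to produce a splitting sphere), which the paper asserts with ``we see that the link is splittable''; your added detail is welcome but not a different argument.
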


\begin{proof}
Let $\gamma$ be a non-trivial $\mathbb{Z}$-coloring for a $\mathbb{Z}$-colorable link $L$. 
We will show that if $mincol_\mathbb{Z} (L)\leq3$, 
then $L$ is splittable.

Since $\gamma$ is non-trivial, 
the cardinality of \textit{Im}$(\gamma)$ is greater than $1$. 

In the case 
that the cardinality of \textit{Im}$(\gamma)$ is $2$, 
we can assume that \textit{Im}$(\gamma)=\{0,a\}$ by Lemma \ref{lem0}. 
By Lemma \ref{lem2} 
a diagram of $L$ has no crossings with over arcs labeled by $a$ other than $\{a|a|a\}$. 
And it also has no crossings with over arcs labeled by $0$ other than $\{0|0|0\}$. 
Therefore the diagram is splittable, 
and we see that the link is splittable. 

In the case that 
the cardinality of \textit{Im}$(\gamma)$ is $3$, 
we can assume \textit{Im}$(\gamma)=\{0,a,b\}$ by Lemma \ref{lem0}. 
Let $b$ be the maximum of \textit{Im}$(\gamma)$. 
By Lemma \ref{lem2}, 
a diagram of the link has no crossings with over arcs labeled by $b$ other than $\{b|b|b\}$. 
If the diagram has no crossing colored by $\{0|a|b\}$, 
it has only crossings colored by $\{0|0|0\}$, 
$\{a|a|a\}$ or $\{b|b|b\}$. 
Then we obtain the link is splittable.
If the diagram has a vertex colored by $\{0|a|b\}$, 
we see $b=2a$.
From Lemma \ref{lem1} and \ref{lem2}, 
the diagram has no crossings labeled by $\{a|*|*\}$ 
with the exception of crossings labeled by $\{a|a|a\}$.
Then we see that the link is splittable.
\end{proof}

If a link is $\mathbb{Z}$-colorable with four colors, 
we can show the following.

\begin{theorem}\label{thm2}
If $mincol_\mathbb{Z}(L)=4$ 
for a $\mathbb{Z}$-colorable link $L$, 
then there exists a diagram $D$ of $L$ and a $\mathbb{Z}$-coloring $\gamma$ on $D$ 
such that \textit{Im}$(\gamma)=\{0,1,2,3\}$.
\end{theorem}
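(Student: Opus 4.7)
The plan is to start from any $\mathbb{Z}$-coloring realizing the minimum, normalize it, and then use a local analysis at non-trivial crossings to force the image to be $\{0,1,2,3\}$.

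Let $\gamma$ be a non-trivial $\mathbb{Z}$-coloring on a diagram $D$ of $L$ with $|\mathrm{Im}(\gamma)|=4$. By Lemma \ref{lem0} we may assume $\mathrm{Im}(\gamma)=\{0,a_1,a_2,a_3\}$ with $0<a_1<a_2<a_3$. Because the defining relation $2\gamma(a)=\gamma(b)+\gamma(c)$ is linear and homogeneous, dividing all colors by $d=\gcd(a_1,a_2,a_3)$ produces another $\mathbb{Z}$-coloring on $D$ with the same four colors, so we may further assume $\gcd(a_1,a_2,a_3)=1$. Since $mincol_\mathbb{Z}(L)=4>2$, Theorem \ref{thm1} forces $L$ to be non-splittable; in particular $D$ may be taken connected, and then each of the four colors must occur at some non-trivial crossing (one whose three arcs do not all share the same color), for otherwise the arcs of that color would form a sub-diagram of $D$ disconnected from the rest.

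The heart of the proof is a case analysis. By Lemmas \ref{lem1} and \ref{lem2}, the over arc at a non-trivial crossing lies in $\{a_1,a_2\}$. Enumerating the pairs of under arcs $\{p,r\}\subset\{0,a_1,a_2,a_3\}$ with $p+r=2q$ for $q\in\{a_1,a_2\}$ shows that the only available non-trivial crossings are $\{0\,|\,a_1\,|\,2a_1\}$ (requiring $2a_1\in\{a_2,a_3\}$), $\{0\,|\,a_2\,|\,a_3\}$ (requiring $a_3=2a_2$), and $\{a_1\,|\,a_2\,|\,a_3\}$ (requiring $a_3=2a_2-a_1$). Imposing that each of $0,a_1,a_2,a_3$ actually appears at a non-trivial crossing, and combining with $\gcd(a_1,a_2,a_3)=1$, I expect these conditions to force $(a_1,a_2,a_3)=(1,2,3)$, giving $\mathrm{Im}(\gamma)=\{0,1,2,3\}$ already on $D$.

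The main obstacle is that the bare local analysis also admits the candidate triples $(a_1,a_2,a_3)=(1,2,4)$ (where relations $a_2=2a_1$ and $a_3=2a_2$ both hold) and $(2,3,4)$ (where $a_3=2a_1$ and $a_3=2a_2-a_1$ both hold), each of which satisfies every crossing equation and the "every color appears" requirement. To dispose of them one must argue beyond the individual crossing level: either a global application of non-splittability that excludes these images outright, or a diagram modification producing a new diagram of $L$ whose $\mathbb{Z}$-coloring has image exactly $\{0,1,2,3\}$. A natural tool is a Reidemeister II move that threads an arc of color $a_2$ over an arc of color $a_1$, inserting a new arc of color $2a_2-a_1$ (which equals $3$ when $(a_1,a_2)=(1,2)$); the case $(2,3,4)$ reduces to $(1,2,4)$ under the reflection $\gamma\mapsto a_3-\gamma$. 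Carrying out these modifications while ensuring the final image is exactly $\{0,1,2,3\}$ is the delicate technical step.
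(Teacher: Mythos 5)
Your setup and enumeration follow the paper's proof closely (normalize by Lemma \ref{lem0}, use Lemmas \ref{lem1} and \ref{lem2} to restrict the over arcs of non-trivial crossings to $\{a_1,a_2\}$, and list the admissible crossing types), and in fact your enumeration is more careful than the paper's: the paper's list of crossings through the color $0$ omits the possibility $\{0\,|\,a_1\,|\,a_3\}$ with $a_3=2a_1$, which is precisely what produces your candidate $(2,3,4)$. But the proposal has a genuine gap exactly where you flag one: the candidates $(a_1,a_2,a_3)=(1,2,4)$ and $(2,3,4)$ are never actually eliminated. You only say that one would need ``either a global application of non-splittability\ldots or a diagram modification,'' and the Reidemeister~II suggestion is the wrong tool here --- it is the technique of Theorems \ref{simplethm} and \ref{thm42}, and no diagram modification is needed for Theorem \ref{thm2}.

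The missing argument (the one the paper uses to kill $\{0,a,2a,4a\}$, and which also disposes of $(2,3,4)$) is this: in each bad case there is a color that never occurs as an \emph{under} arc at any non-trivial crossing --- for $\{0,a,2a,4a\}$ the only non-trivial crossings are $\{0|a|2a\}$ and $\{0|2a|4a\}$, so $a$ is never an under arc; for $\{0,2,3,4\}$ the only non-trivial crossings are $\{0|2|4\}$ and $\{2|3|4\}$, so $3$ is never an under arc. An arc of that color therefore meets only trivial crossings when it passes under, so every component carrying that color is monochromatic and only ever goes under strands of the same color; the union of these components can be lifted off the rest of the diagram, so $L$ is splittable --- a contradiction. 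Your weaker observation that ``each color occurs at some non-trivial crossing'' does not suffice, because a color can occur only as an over arc (as $a$ does in $\{0|a|2a\}$) and still force splittability. With this one lemma-level argument inserted, your outline closes and coincides with the paper's proof; the final division by $a$ (or your $\gcd$ normalization) then yields $\mathrm{Im}(\gamma)=\{0,1,2,3\}$.
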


\begin{proof}
Suppose that there exists a $\mathbb{Z}$-coloring $\gamma'$ with four colors.
From Lemma \ref{lem0}, we can suppose \textit{Im}$(\gamma')=\{0,a,b,c\}$ with $a>0, b>0$ and $c>0$. 
We assume that 
$c$ is the maximum of \textit{Im}$(\gamma')$.
Since $L$ is a non-splittable link, 
we assume $0<a<b<c$. 
From Lemma \ref{lem1} and Lemma \ref{lem2}, $D$ has a crossing colored by $\{0|a|b\}$ or a crossing colored by $\{0|b|c\}$. 
If $D$ has both of the crossings, by the definition of $\mathbb{Z}$-coloring, we obtain $a-0=b-a$ and $b-0=c-b$. 
We see $b=2a$ and $c=2b$, and then we obtain \textit{Im}$(\gamma')=\{0,a,2a,4a\}$. 
Then the diagram has no crossings with colors $\{a|*|*\}$ other than $\{a|a|a\}$. 
This gives a contradiction to that $L$ is a non-splittable link.
If $D$ has only either of the crossings 
colored by $\{0|a|b\}$ or $\{0|b|c\}$, 
then $D$ has only 3 colors, 
contradicting to Theorem \ref{thm1}.
From the above, $D$ has either of the crossings and another crossing colored by $\{a|b|c\}$. 
If $D$ has a crossing colored by $\{0|b|c\}$ and a crossing colored by $\{a|b|c\}$, we obtain $b-0=c-b$ and $b-a=c-b$. 
We see $a=0$, and this is contradictory to the assumption. 
If $D$ has a crossing colored by $\{0|a|b\}$ and a crossing colored by $\{a|b|c\}$, we obtain $a-0=b-a$ and $b-a=c-b$. 
We see $b=2a$ and $c=3a$. 
Therefore, we obtain \textit{Im}($\gamma'$)$=\{0,a,2a,3a\}$. 
By dividing by $a$, 
we have a $\mathbb{Z}$-coloring with colors $\{0,1,2,3\}$ as desired.
\end{proof}

Among links of crossing numbers at most $9$, 
there are $5$ links with zero determinant. 
For the $\mathbb{Z}$-colorable links, 
the colorings on the diagrams in \cite{linkinfo} are quite distinctive. 
See figures in Section 6. 
In the next section, we consider such distinctive colorings. 


\section{Reduction of colors}\label{sec:simple}

In this section, 
we focus on the \lq\lq simplest\rq\rq  \ 
$\mathbb{Z}$-coloring 
found for the links with at most 9 crossings. 
Based on such examples, 
we introduce the following notion. 

\begin{definition}
Let $L$ be a non-trivial $\mathbb{Z}$-colorable link, 
and $\gamma$ a $\mathbb{Z}$-coloring on a diagram $D$ of $L$. 
Suppose that there exists a natural number $d$ such that, at all the crossings in $D$, 
the differences between the colors of the over arcs and the under arcs are $d$ or $0$.
Then we call $\gamma$ a \textit{simple} $\mathbb{Z}$-coloring.
\end{definition}

For example, a pretzel knot $P(n,-n,n,-n,\cdots,n,-n)$ with integer $n$ admits a simple $\mathbb{Z}$-coloring. 
See Figure \ref{pretzelex}. 

The next is our first main result in this paper.

\begin{theorem}\label{simplethm}
Let $L$ be a non-splittable $\mathbb{Z}$-colorable link.
If there exists a simple $\mathbb{Z}$-coloring on a diagram of $L$,
then $mincol_\mathbb{Z}(L)=4$.
\end{theorem}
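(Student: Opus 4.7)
The lower bound $mincol_\mathbb{Z}(L)\ge 4$ is supplied by Theorem~\ref{thm1}, so what remains is to exhibit a diagram of $L$ together with a non-trivial $\mathbb{Z}$-coloring whose image has cardinality exactly four. I would first normalise the hypothesised simple coloring $\gamma$ on a diagram $D$, with common difference $d$: Lemma~\ref{lem0} shifts its minimum to $0$, so every color lies in $d\mathbb{Z}_{\ge 0}$, and dividing every color by $d$ preserves the relation $2a=b+c$ and produces a simple $\mathbb{Z}$-coloring $\bar\gamma$ of common difference $1$, with image contained in $\{0,1,\dots,N\}$ for $N := \max\mathrm{Im}(\bar\gamma)$. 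If $N \le 3$, then $\bar\gamma$ already uses at most four colors, and combined with Theorem~\ref{thm1} we obtain $mincol_\mathbb{Z}(L) = 4$.

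When $N \ge 4$, I plan to exploit the rigid local structure forced by Lemmas~\ref{lem1} and~\ref{lem2}: an arc colored $N$ never appears as the over arc of a non-trivial crossing, and whenever it is an under arc, the over arc is colored $N-1$ and the opposite under arc is colored $N-2$. The reduction is inductive on $N$: produce another diagram $D_1$ of $L$, via a finite sequence of Reidemeister moves, on which $L$ admits a simple $\mathbb{Z}$-coloring of common difference $1$ with maximum value at most $N-1$, and repeat. Concretely, each arc colored $N$ is sandwiched between two undercrossings whose over arcs are both colored $N-1$; when those over arcs are segments of a common strand of $D$, an inverse Reidemeister~II move collapses the two undercrossings, merges the flanking $(N-2)$-colored arcs, and eliminates the $N$-arc. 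The resulting coloring remains simple with the same common difference, and iterating removes every $N$-colored arc, dropping the maximum to $N-1$.

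The principal obstacle is to guarantee that the configuration needed for the inverse R2 move can always be produced. In general, the two $(N-1)$-colored over arcs bounding a given $N$-arc need not belong to the same strand of $D$, and the surrounding tangle may be complicated. The technical heart of the argument should therefore be a demonstration that, using Reidemeister~III (and perhaps R1) manipulations, every $N$-colored arc can be brought into a neighborhood where the inverse R2 move applies, all without spoiling the simple character of the coloring elsewhere. Carrying out this case analysis and verifying the preservation of simplicity under the intermediate moves is where I expect the substance of the proof to lie.
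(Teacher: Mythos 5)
Your overall strategy---normalize the coloring and then induct downward on the maximum color by modifying the diagram---is the same as the paper's, and the lower bound via Theorem~\ref{thm1} is handled correctly. But there is a genuine gap exactly where you say you expect ``the substance'' to lie, and the move you propose is not the one that closes it. An inverse Reidemeister~II move needs the two $(N-1)$-colored over arcs flanking an $N$-colored arc to be parallel segments of a single strand cobounding an empty bigon with it; there is no reason such a configuration is always attainable by R1/R3 manipulations, and you give no argument that it is, so the induction step is not established. The paper's reduction is local and unconditional: after first eliminating all crossings colored $\{M|M|M\}$ (a case your sketch overlooks---an arc colored $N$ may terminate at such a crossing, where the over arc is colored $N$ rather than $N-1$, so your description of the local structure around a maximal arc is incomplete), it transforms the diagram \emph{near each individual crossing} colored $\{M|M-d|M-2d\}$ by a finger-type move that \emph{inserts} crossings rather than cancelling them, recoloring the maximal arc with colors $\le M-d$ while keeping the coloring simple with the same difference $d$. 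No hypothesis relating the two over arcs at the ends of the maximal arc is required, so the maximum strictly decreases at every stage and one terminates at $\{0,d,2d,3d\}$.

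Two secondary points. Your renormalization to common difference $1$ (dividing all colors by $d$) is valid only because every diagram of a non-splittable link is connected, so that adjacent arcs differ by $0$ or $\pm d$ and all colors are multiples of $d$; this deserves a sentence (the paper avoids the issue by working with $d$ throughout). And in the base case $N\le 3$ you should invoke Theorem~\ref{thm1} to rule out the coloring using fewer than four colors, which you do only implicitly.
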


Throughout the rest of the paper, 
we depict a crossing colored by $\{a|a|a\}$ with an integer $a$ as shown in Figure \ref{aaa}.

\begin{figure}[H]
\begin{center}
\includegraphics[width=3.5cm,clip,bb=0 0 185 186]{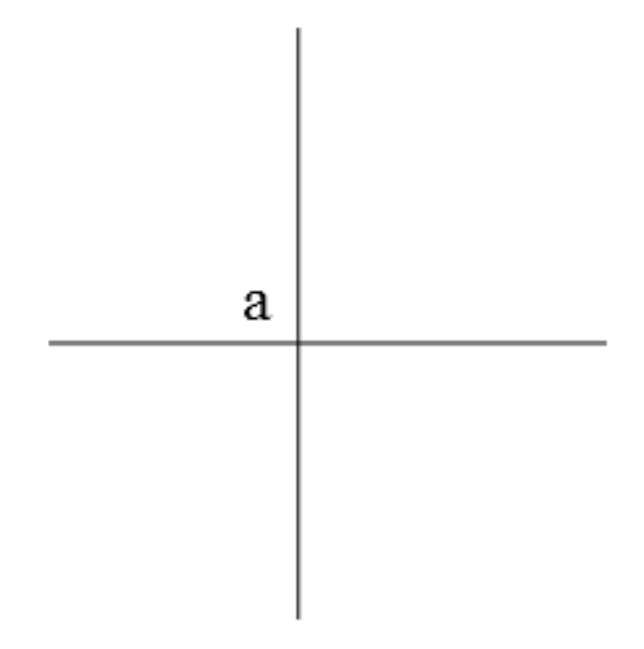}
\caption{}\label{aaa}
\end{center}
\end{figure}

\begin{proof}[Proof of Theorem \ref{simplethm}]
Let $\gamma$ be a simple $\mathbb{Z}$-coloring on a diagram of a non-splittable $\mathbb{Z}$-colorable link $L$. 
Then there exists a natural number $d$ such that, 
at all the crossing in $D$, 
the differences between the colors of the over arcs 
and the under arcs are $d$ or $0$. 
From Lemma \ref{lem1}, 
we regard that $0$ is the minimum of \textit{Im}$(\gamma)$.
For the maximum $M$ of \textit{Im}$(\gamma)$, 
by Lemma \ref{lem2}, the diagram has only crossings colored by $\{M|M|M\}$ or $\{M|M-d|M-2d\}$. 
First we delete a crossing colored by $\{M|M|M\}$ as shown in Figure \ref{reductionM}.

\begin{figure}[H]
\begin{center}
\includegraphics[width=11cm,clip,bb=0 0 740 273]{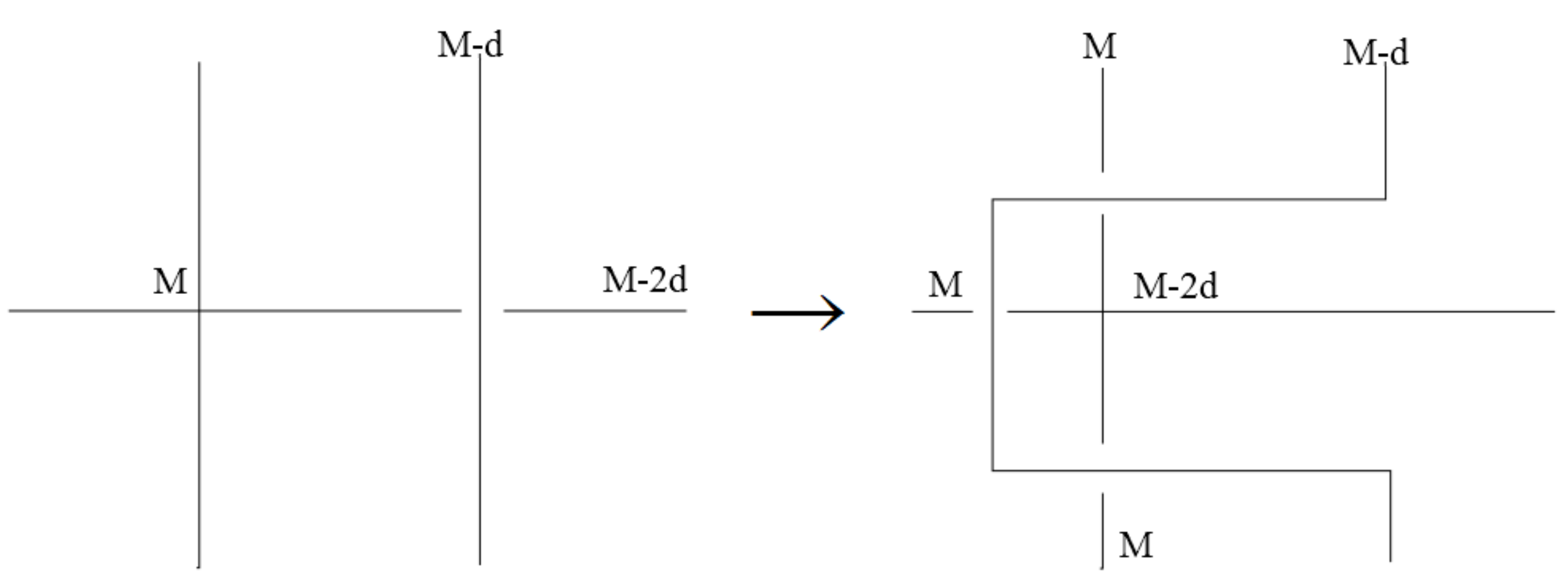}
\caption{}\label{reductionM}
\end{center}
\end{figure}

Next, we transform the diagram inductively near the crossings colored by $\{M|M-d|M-2d\}$ as shown in Figure \ref{reduction}. 

\begin{figure}[H]
\begin{center}
\includegraphics[width=4.6cm,clip,bb=0 0 333 225]{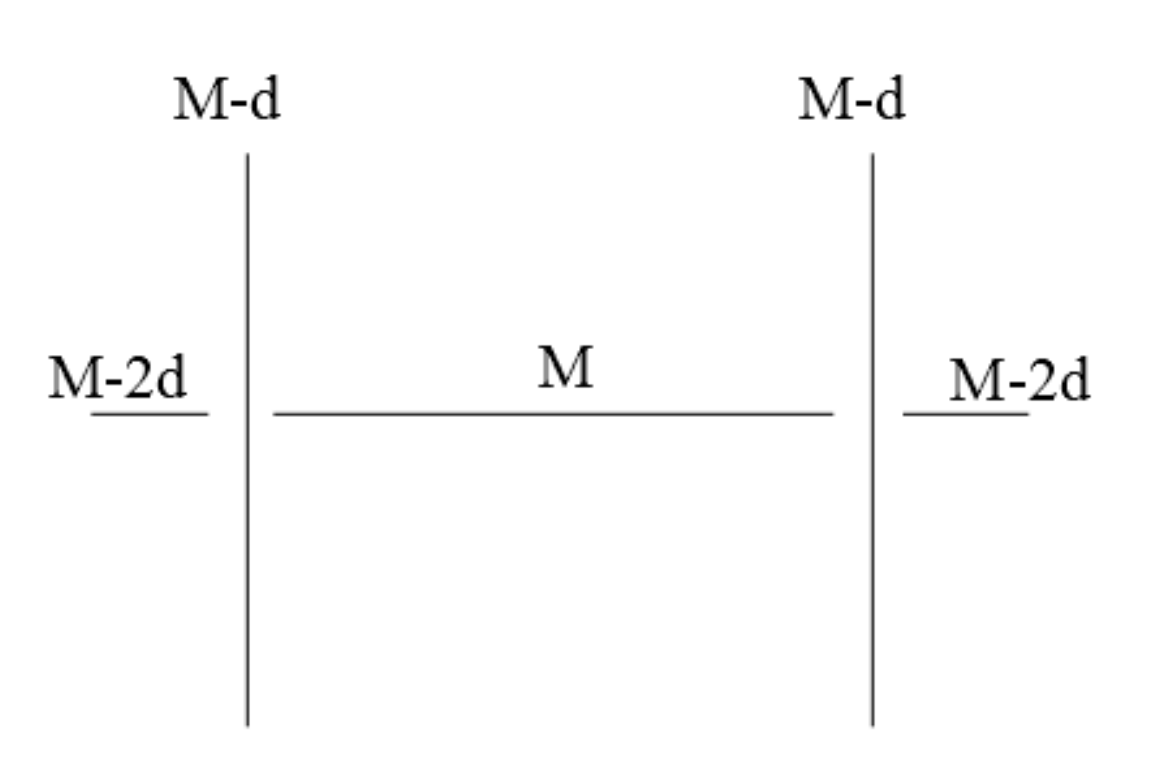}
\includegraphics[height=3cm,clip,bb=0 0 80 365]{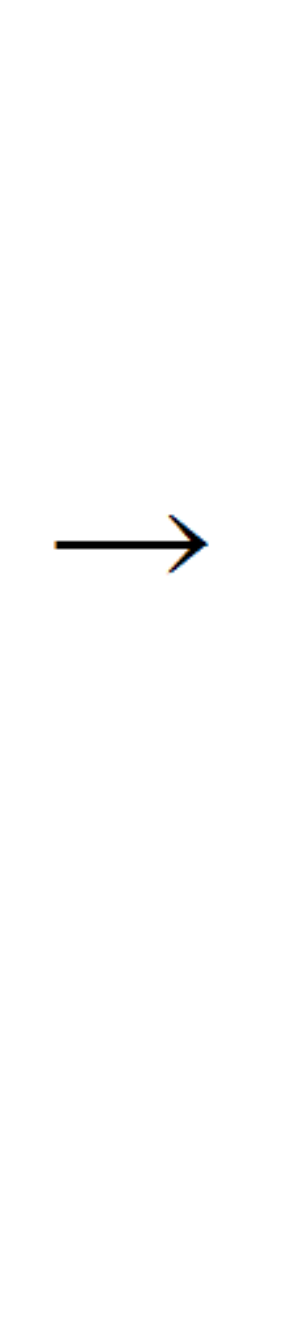}
\includegraphics[width=7cm,clip,bb=0 0 586 340]{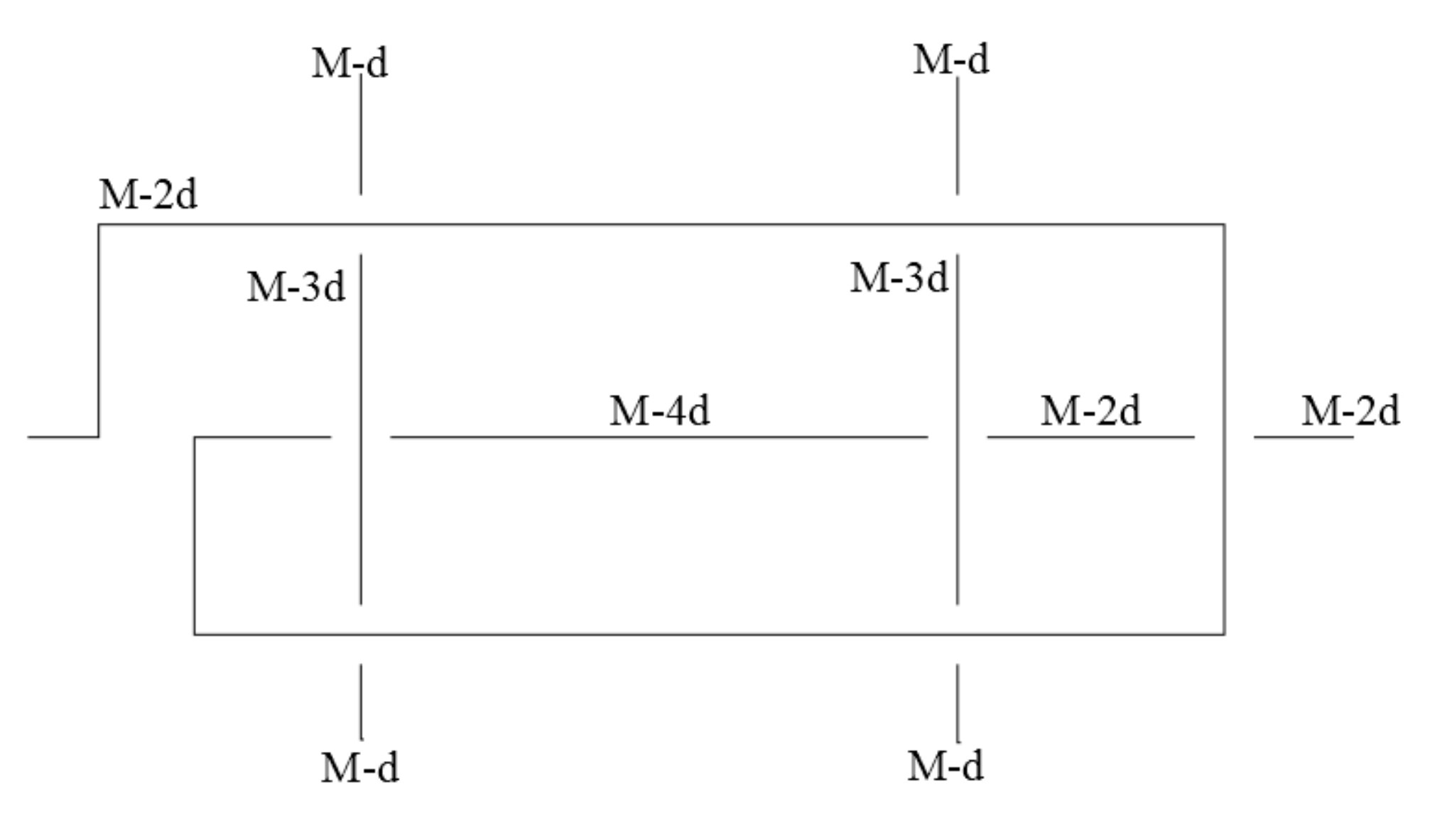}
\caption{}\label{reduction}
\end{center}
\end{figure}

Then we obtain a simple $\mathbb{Z}$-coloring such that the maximum of colors is less than that for the previous one.

Note that after changing the diagram, the common difference is still $d$ or $0$. 
If the maximum of colors for the obtained $\mathbb{Z}$-coloring is at least $4d$, 
then we perform the transformation repeatedly. 
After all, we obtain a $\mathbb{Z}$-coloring with $\{0,d,2d,3d\}$. 
Therefore $mincol_\mathbb{Z}(L)$ is at most $4$.
From Lemma \ref{lem1}, we see $mincol_\mathbb{Z}(L)=4$.
\end{proof}

%


\section{$\mathbb{Z}$-coloring by five colors}\label{sec:five}

In the previous section, we investigated the case that 
a diagram with a simple $\mathbb{Z}$-coloring. 
However, there are many diagrams of $\mathbb{Z}$-colorable links without simple $\mathbb{Z}$-colorings. 
See Figure \ref{L10n32ex} for example.
In this section, 
we focus on the case that a diagram is colored by five colors. 
We first see the following in this case.

\begin{figure}[H]
\begin{center}
\includegraphics[width=8cm,clip,bb=0 0 453 414]{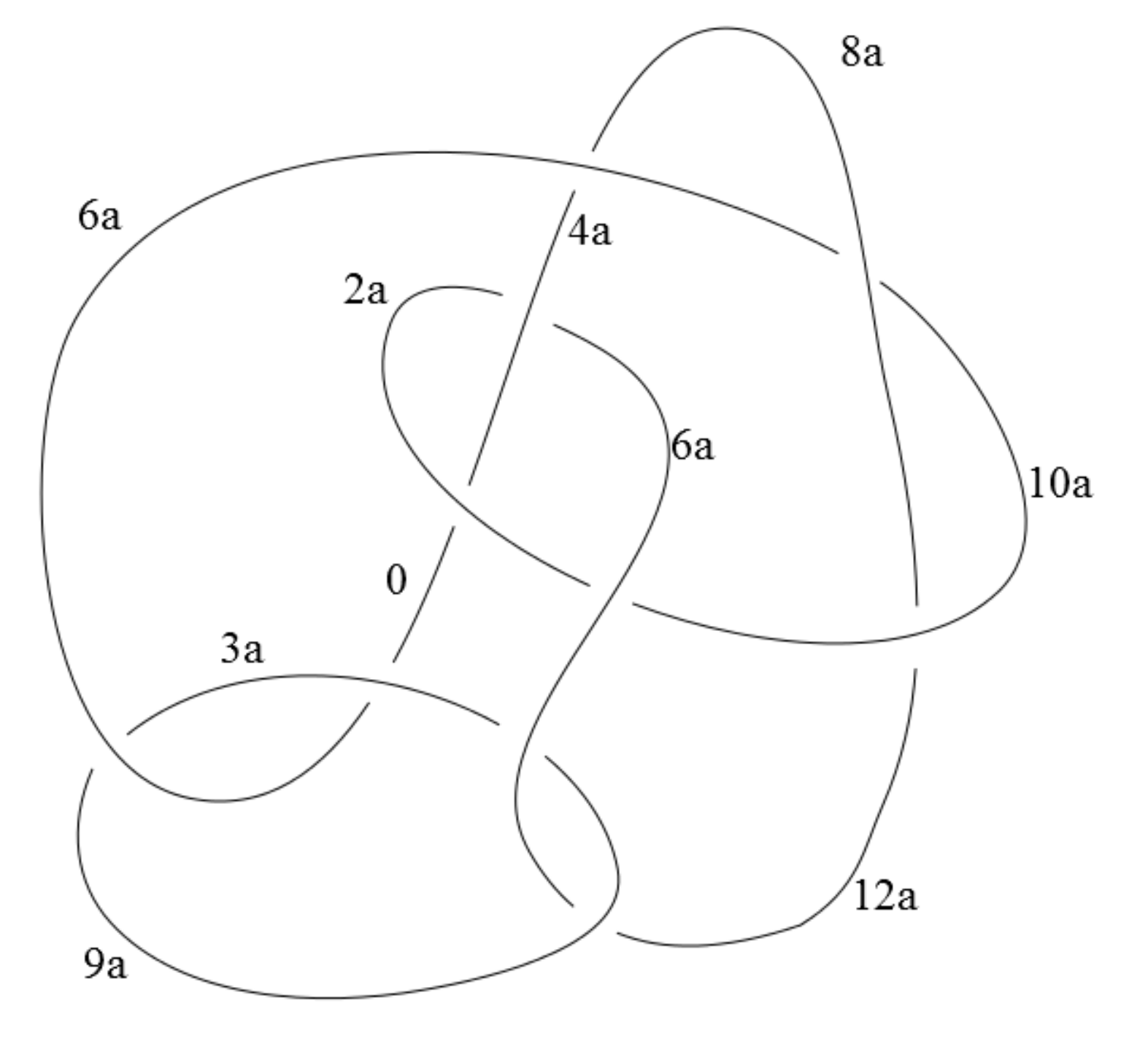}
\caption{Non-simple $\mathbb{Z}$-coloring for $L10n32$}\label{L10n32ex}
\end{center}
\end{figure}

\begin{theorem}\label{thm41}
If a non-splittable $\mathbb{Z}$-colorable link admits a $\mathbb{Z}$-coloring with five colors,
then there exists a $\mathbb{Z}$-coloring $\gamma$ for $L$ 
with {\it Im}$(\gamma)=\{0,1,2,3,4\}$, $\{0,1,2,3,5\}$, $\{0,1,2,3,6\}$, $\{0,1,2,4,7\}$, $\{0,2,3,4,5\}$, $\{0,3,4,5,6\}$ or $\{0,3,5,6,7\}$.
\end{theorem}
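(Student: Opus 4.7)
The plan is to mimic the enumeration strategy used in the proof of Theorem \ref{thm2}. Starting from a non-trivial $\mathbb{Z}$-coloring $\gamma'$ with five colors, I would first invoke Lemma \ref{lem0} to write $\text{Im}(\gamma') = \{0, a, b, c, d\}$ with $0 < a < b < c < d$, and then divide all four positive integers by $\gcd(a,b,c,d)$ so that they share no common factor. The goal is then to show that the normalized tuple $(a,b,c,d)$ coincides with one of $(1,2,3,4)$, $(1,2,3,5)$, $(1,2,3,6)$, $(1,2,4,7)$, $(2,3,4,5)$, $(3,4,5,6)$, or $(3,5,6,7)$.

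By Lemmas \ref{lem1} and \ref{lem2}, at any non-trivial crossing the colors $0$ and $d$ occur only as under arcs, and such a crossing must carry a pattern $\{0|m|2m\}$ or $\{2m-d|m|d\}$ with $m \in \text{Im}(\gamma')$. The splitting argument used in the proof of Theorem \ref{thm1} also shows, for a non-splittable link, that every color $x \in \text{Im}(\gamma')$ must occur at some non-trivial crossing; otherwise the $x$-colored arcs meet the rest of the diagram only at $\{x|x|x\}$ crossings, and an $x$-colored sublink splits off. Thus each of $0, a, b, c, d$ lies in at least one three-term arithmetic progression $\{x, y, z\}$ with $2y = x + z$ inside $\text{Im}(\gamma')$.

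The next step is a case analysis. I would enumerate the at most ten possible AP equations (such as $b = 2a$, $c = 2a$, $c = 2b$, $d = 2c$, $c = 2b-a$, $d = 2b-a$, $d = 2c-a$, $d = 2c-b$), branch first on the AP that contains $0$ (six cases) and then on the AP that contains $d$ (six further cases), and in each branch impose the participation requirement for $a$, $b$, and $c$. The resulting linear system either is inconsistent---two colors collide or a color becomes negative---or it forces a specific ratio $(a:b:c:d)$; rescaling to $\gcd = 1$ produces one of the seven listed tuples. The involution $\gamma \mapsto d - \gamma$ is itself a $\mathbb{Z}$-coloring of $L$ and interchanges the roles of $0$ and $d$; it identifies the pairs $\{0,1,2,3,5\} \leftrightarrow \{0,2,3,4,5\}$, $\{0,1,2,3,6\} \leftrightarrow \{0,3,4,5,6\}$, and $\{0,1,2,4,7\} \leftrightarrow \{0,3,5,6,7\}$, with $\{0,1,2,3,4\}$ self-dual, so only four essential branches need to be worked out in full.

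The main obstacle will be the volume of the case analysis: although each subcase is a routine linear computation, there are on the order of a few dozen branches to verify, and particular care is needed when a color participates in several APs simultaneously, since the overlapping equations can force unexpected coincidences among the $a,b,c,d$ that must be checked by hand to either produce one of the seven listed tuples or yield a contradiction.
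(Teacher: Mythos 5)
Your overall strategy (normalize the image, use non-splittability to force every color to participate in a non-trivial crossing, then enumerate) is the same route the paper takes; the paper just packages the participation condition as the \emph{palette graph}, whose vertices are the colors and whose edges record pairs of under-arc colors at a crossing. But there is a genuine gap in the key step: the necessary condition you extract is too weak. You conclude only that each color ``lies in'' some three-term arithmetic progression inside the image, which allows a color to appear solely as the \emph{middle} term, i.e.\ solely as an over-arc at non-trivial crossings. That case does not save non-splittability: if a color $x$ is never an under-arc at a non-trivial crossing, then every under-crossing of an $x$-colored arc is $\{x|x|x\}$, so the $x$-colored arcs form whole components of $L$ that are always the over-strand wherever they meet the rest of the diagram, and such components can be lifted off --- this is exactly the splitting argument in the proof of Theorem \ref{thm1}. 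The correct condition is therefore that every color occurs as an \emph{under-arc} color at some non-trivial crossing, i.e.\ is an endpoint (not merely a member) of a progression; in palette-graph language, every vertex has positive degree.

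With your weaker filter the enumeration does not close up to the stated list. For example, $\{0,1,2,4,6\}$ has $\gcd=1$ and every element lies in one of the progressions $(0,1,2)$, $(0,2,4)$, $(2,4,6)$, so it survives your test, yet it is not among the seven images: the color $1$ occurs only as the middle of $(0,1,2)$, never as an endpoint, so the $1$-colored components split off. (Similarly $\{0,2,3,4,6\}$ slips through.) The paper kills such cases by a parity observation that you also do not use: two under-arc colors at a crossing must have the same parity (their average is a color, hence an integer), so the palette graph decomposes into an even part containing $0$ and an odd part, and non-splittability forces each part to contain at least two vertices, giving a $2+3$ split that makes the enumeration short. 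In $\{0,1,2,4,6\}$ the odd part is the single vertex $1$, which is immediately fatal. Your $\gcd$ normalization and the involution $\gamma\mapsto d-\gamma$ are both fine and do match reductions the paper uses in the proof of Theorem \ref{thm42}; to repair the argument you need to replace ``lies in an AP'' by ``is an endpoint of an AP'' and, ideally, add the parity decomposition to keep the case analysis manageable.
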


To prove the theorem above, we use the following notion, 
which is originally introduced for Fox coloring in \cite{NakamuraNakanishiSatoh2}.

\begin{definition}
For a $\mathbb{Z}$-coloring $\gamma$ 
of a diagram of a $\mathbb{Z}$-colorable link, 
let $c_1,c_2,\cdots,c_l$ be the distinct colors 
on the arcs $x_1,x_2,\cdots,x_n$. 
We define {\it the palette graph} $G$ associated with $\gamma$ 
as follows;\\
(1) The vertices of $G$ are the colors $c_1,c_2,\cdots,c_l$.\\
(2) Two vertices $c_i$ and $c_j$ are connected by an edge 
if and only if 
there is a crossing of the diagram whose under arcs are colored 
by $c_i$ and $c_j$.\\
We label the edge in (2) by $(c_i+c_j)/2$, 
which is coincident with one of $c_1,c_2,\cdots,c_l$ 
assigned to the over arc.
\end{definition}

For a $\mathbb{Z}$-coloring on a diagram of a link with at least 2 components, 
the palette graph can be disconnected. 
For example, Figure \ref{l10n36pallet} illustrate the palette graph for the link $L10n36$ with a $\mathbb{Z}$-coloring.

\begin{figure}[H]
\begin{center}
\includegraphics[width=12cm,clip,bb=0 0 757 366]{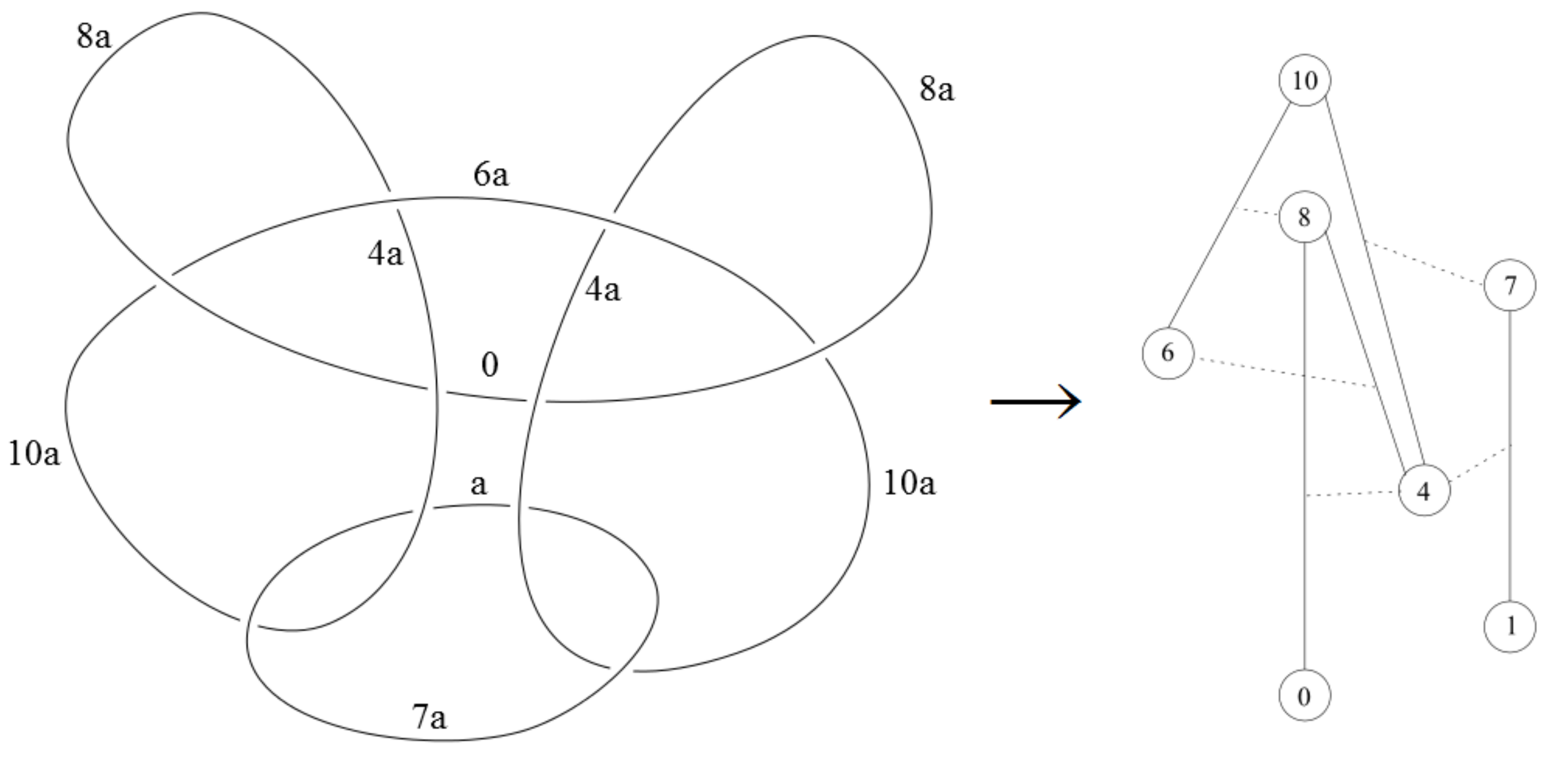}
\caption{Palette graph for L10n36}\label{l10n36pallet}
\end{center}
\end{figure}

\begin{proof}[Proof of Theorem \ref{thm41}]
We suppose that a non-splittable $\mathbb{Z}$-colorable link admits a $\mathbb{Z}$-coloring $\gamma$ with five colors.
By Lemma \ref{lem1}, without loss of generality, we assume that the image of $\gamma$ is $\{0,a,b,c,d\}$ with $0\neq a\neq b\neq c\neq d$. 
There are even numbers and odd numbers in the image of $\gamma$. 
A vertex labeled by an even number and a vertex labeled by an odd number are not connected by definition of the palette graph. 
Since the link is non-splittable, there are at least 2 vertices with even numbers and at least 2 vertices with odd numbers. 
Furthermore any vertex is connected to another vertex. 
Therefore the palette graph of the diagram is a $2$-component graph. 

In the case that the vertices colored by $0,a$ are connected and the vertices colored by $b,c,d$ are connected 
as shown in Figure \ref{0a}, we obtain the cases that {\it Im}$(\gamma)=\{0,1,2,3,5\}$ and $\{0,3,5,6,7\}$.

\begin{figure}[H]
\begin{center}
\includegraphics[width=8cm,clip,bb=0 0 325 185]{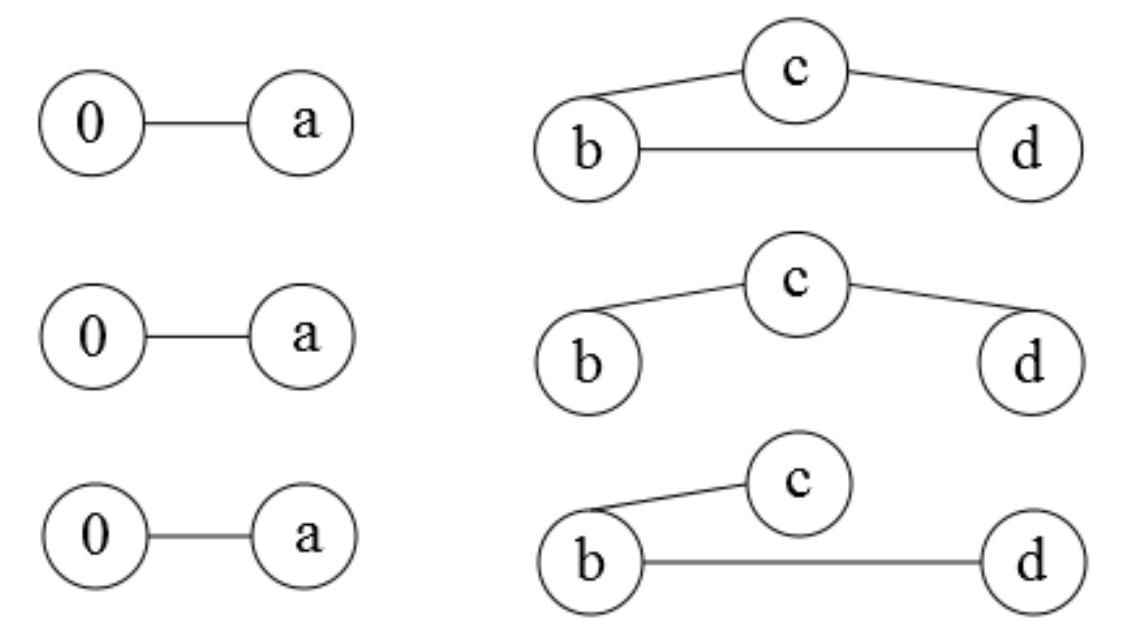}
\caption{ }\label{0a}
\end{center}
\end{figure}

In the case the vertices colored by $0,a,b$ 
are connected and the vertices colored by $c,d$ 
are connected as shown in Figure \ref{0ab}, 
we obtain the cases that {\it Im}$(\gamma)=\{0,1,2,3,4\}$, $\{0,1,2,3,6\}$, $\{0,2,3,4,5\}$, $\{0,3,4,5,6\}$ and $\{0,1,2,4,7\}$.

\begin{figure}[H]
\begin{center}
\includegraphics[width=8cm,clip,bb=0 0 327 188]{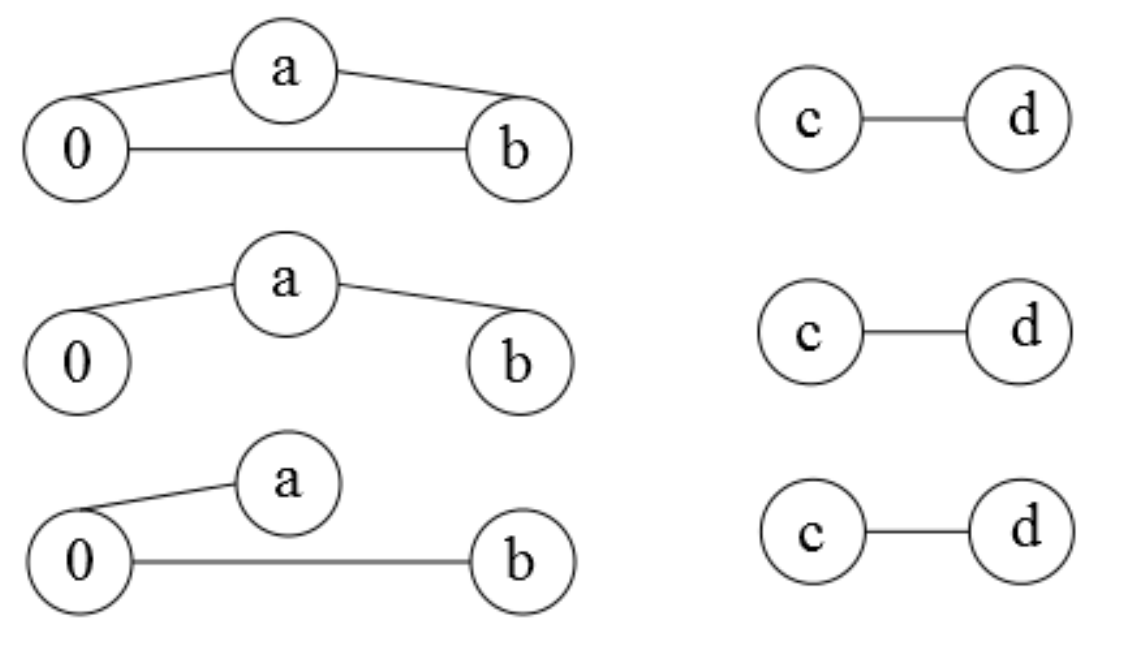}
\caption{ }\label{0ab}
\end{center}
\end{figure}

\end{proof}

\begin{remark}\label{rem01234}
There exist four palette graphs 
associated with a $\mathbb{Z}$-coloring $\gamma$ 
with {\it Im}$(\gamma)=\{0,1,2,3,4\}$ 
as shown in Figure \ref{01234}. 

\begin{figure}[H]
\begin{center}
\includegraphics[width=10cm,clip,bb=0 0 456 183]{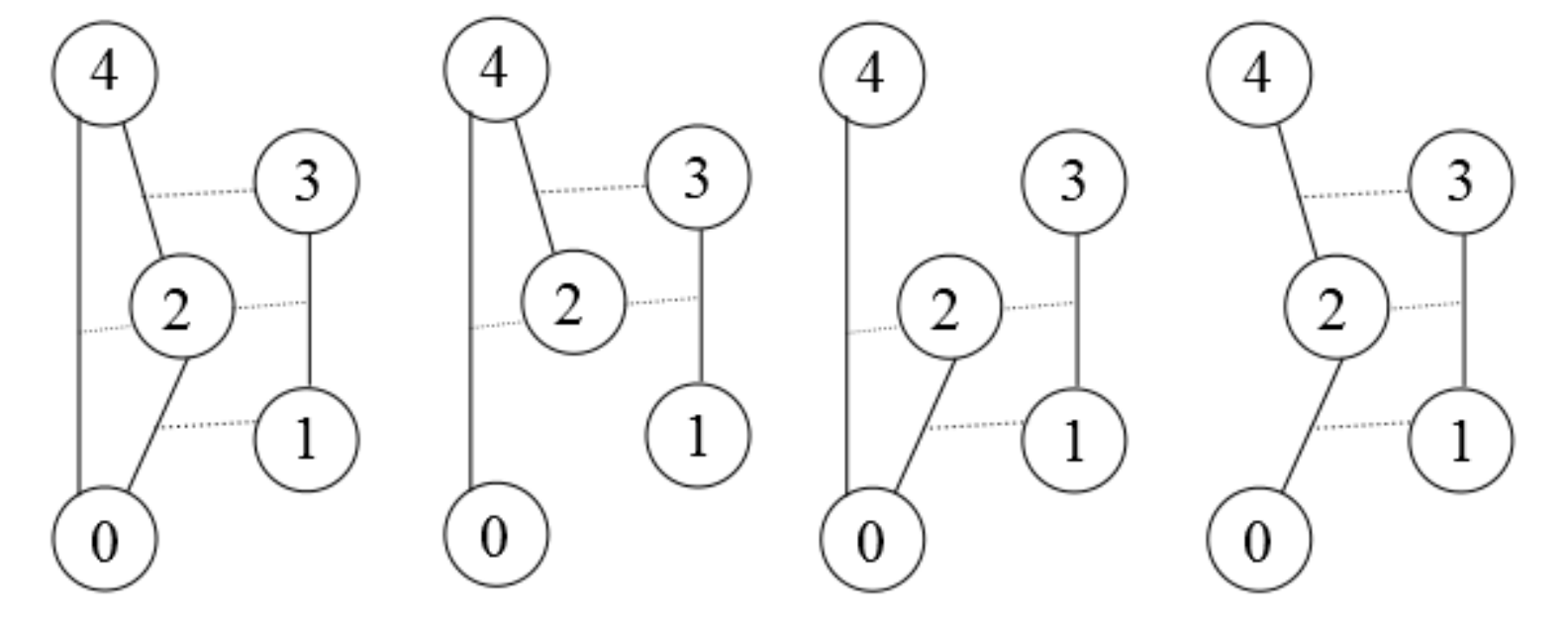}
\caption{Palette graphs with the image $\{0,1,2,3,4\}$}\label{01234}
\end{center}
\end{figure}
\end{remark}

The following is our second main theorem.

\begin{theorem}\label{thm42}
If a non-splittable link $L$ admits a $\mathbb{Z}$-coloring $\gamma$ 
with five colors,  
then $mincol_\mathbb{Z}(L)=4$.
\end{theorem}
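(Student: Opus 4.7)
The plan is to establish $mincol_\mathbb{Z}(L)\le 4$ and combine this with Theorem \ref{thm1} to obtain equality. By Theorem \ref{thm41}, after the normalization of Lemma \ref{lem0}, I may assume the given five-color $\mathbb{Z}$-coloring $\gamma$ has \textit{Im}$(\gamma)$ equal to one of the seven explicit sets listed in Theorem \ref{thm41}. Let $M=\max$\textit{Im}$(\gamma)$. By Lemma \ref{lem2} together with the defining relation $2b=a+c$ at a crossing, the only crossings whose over- or under-arc is colored $M$ are the trivial $\{M|M|M\}$ together with a very short list of non-trivial triples $\{a|b|M\}$ in which $a,b\in$\textit{Im}$(\gamma)$ and $2b=a+M$. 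I would first compile this list case by case; for instance, for $\textit{Im}(\gamma)=\{0,1,2,3,4\}$ only $\{0|2|4\}$ and $\{2|3|4\}$ arise (beyond $\{4|4|4\}$); for $\{0,1,2,3,5\}$ only $\{1|3|5\}$; for $\{0,1,2,3,6\}$ only $\{0|3|6\}$; for $\{0,1,2,4,7\}$ only $\{1|4|7\}$; for $\{0,2,3,4,5\}$ only $\{3|4|5\}$; for $\{0,3,4,5,6\}$ only $\{0|3|6\}$ and $\{4|5|6\}$; and for $\{0,3,5,6,7\}$ only $\{3|5|7\}$ and $\{5|6|7\}$.

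Next, for each non-trivial crossing type $\{a|b|M\}$ appearing in a given case, I would exhibit a local diagram modification in the spirit of Figure \ref{reduction} from the proof of Theorem \ref{simplethm}. Each of the listed triples has the form $M-2d,M-d,M$ for a suitable $d$, so the same \textquotedblleft unthreading\textquotedblright{} picture applies: one slides the $M$-colored strand past the $(M-d)$-colored over-arc to replace the crossing by a configuration whose arcs receive only colors in \textit{Im}$(\gamma)\setminus\{M\}$, while the crossings of type $\{M|M|M\}$ are eliminated as in Figure \ref{reductionM}. Once every crossing involving $M$ has been treated, the resulting diagram represents the same link $L$ and carries a $\mathbb{Z}$-coloring whose image is contained in a four-element subset of $\mathbb{Z}$, yielding $mincol_\mathbb{Z}(L)\le 4$; combined with Theorem \ref{thm1}, this gives the claim.

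The main obstacle is verifying, case by case, that the local modifications genuinely preserve the link type and that they can be applied simultaneously at all crossings involving $M$ without reintroducing $M$ or spawning a new, sixth color on some neighboring arc. The delicate instances are $\{0,1,2,3,4\}$ and $\{0,3,4,5,6\}$, where $M$ participates in two distinct non-trivial crossing types; here one has to run the reduction on, say, all $\{0|2|4\}$-type crossings first and then on the $\{2|3|4\}$-type, checking that the colors propagated by the move remain inside the four-element target set. Because in each of the seven cases the list of admissible crossings is so short, this bookkeeping is manageable, and the argument from Theorem \ref{simplethm} adapts essentially verbatim within each case.
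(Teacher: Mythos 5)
Your reduction to the seven image sets via Theorem \ref{thm41} and your case-by-case lists of the possible crossings involving $M$ are all correct, and your overall skeleton (eliminate the extremal color by local moves, then invoke Theorem \ref{thm1} for the lower bound) matches the paper's strategy in spirit. However, the central step is asserted rather than proved, and as stated it cannot work: the claim that each crossing of type $\{M-2d|M-d|M\}$ can be unthreaded ``essentially verbatim'' as in Figure \ref{reduction} so that all new arcs receive colors in \textit{Im}$(\gamma)\setminus\{M\}$ is false. The move of Figure \ref{reduction} operates on an entire arc colored $M$ and uses both of its terminal crossings; it succeeds in the proof of Theorem \ref{simplethm} precisely because a simple coloring forces both ends of an $M$-colored arc to pass under arcs with the same color $M-d$. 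In the five-color cases this fails: for \textit{Im}$(\gamma)=\{0,1,2,3,4\}$ an arc colored $4$ may terminate at a $\{0|2|4\}$ crossing on one side and a $\{2|3|4\}$ crossing on the other (two different values of $d$), and this is exactly where the paper needs the additional constructions of Figures \ref{Red-a}--\ref{Red-c} and \ref{Red02}--\ref{Red03}. Moreover, the color bookkeeping does not close up inside a four-element set: the relation $2b=a+c$ at the crossings created by the rerouting forces colors outside \textit{Im}$(\gamma)\setminus\{M\}$. The paper's own computation for $\{0,1,2,4,7\}$ ends with the \emph{six}-color image $\{0,1,2,3,4,5\}$, and the case $\{0,3,4,5,6\}$ passes through $\{2,3,4,5,6\}$; the argument is saved not by landing on four colors directly, but by landing on a \emph{simple} coloring and then invoking Theorem \ref{simplethm}, or by an affine change of colors reducing to a previously handled image set.

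In short, you have the right normalization and the right inventory of crossings, but the body of the proof --- exhibiting explicit link-preserving diagram moves for each crossing type, tracking the new colors they introduce, and explaining why the result is nevertheless simple or reducible to an earlier case --- is exactly the content you have deferred to ``manageable bookkeeping,'' and your proposed shortcut of remaining within the original four non-extremal colors is contradicted by what those moves actually produce.
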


\begin{remark}
In the case that \textit{Im}$(\gamma)=\{0,1,2,3,4\}$, 
it cannot be assumed that $\gamma$ is a simple $\mathbb{Z}$-coloring, as seen in Remark \ref{rem01234}.
\end{remark}

\begin{proof}[Proof of Theorem \ref{thm42}]
Let $D$ be a diagram of a non-splittable link $L$ 
and $\gamma$ a $\mathbb{Z}$-coloring on $D$.

We first consider the case that \textit{Im}$(\gamma)=\{0,1,2,3,4\}$. 

We start with deleting the crossings colored by $\{4|4|4\}$ 
as illustrated in Figure \ref{Red01}. 
In the figure, $3(2)$ and $2(0)$ indicate two cases of the colors of the corresponding arcs, 
that is, when one is $3$ (resp. $2$), then the other is $2$ (resp. $0$). 

\begin{figure}[H]
\begin{center}
\includegraphics[width=11cm,clip,bb=0 0 740 273]{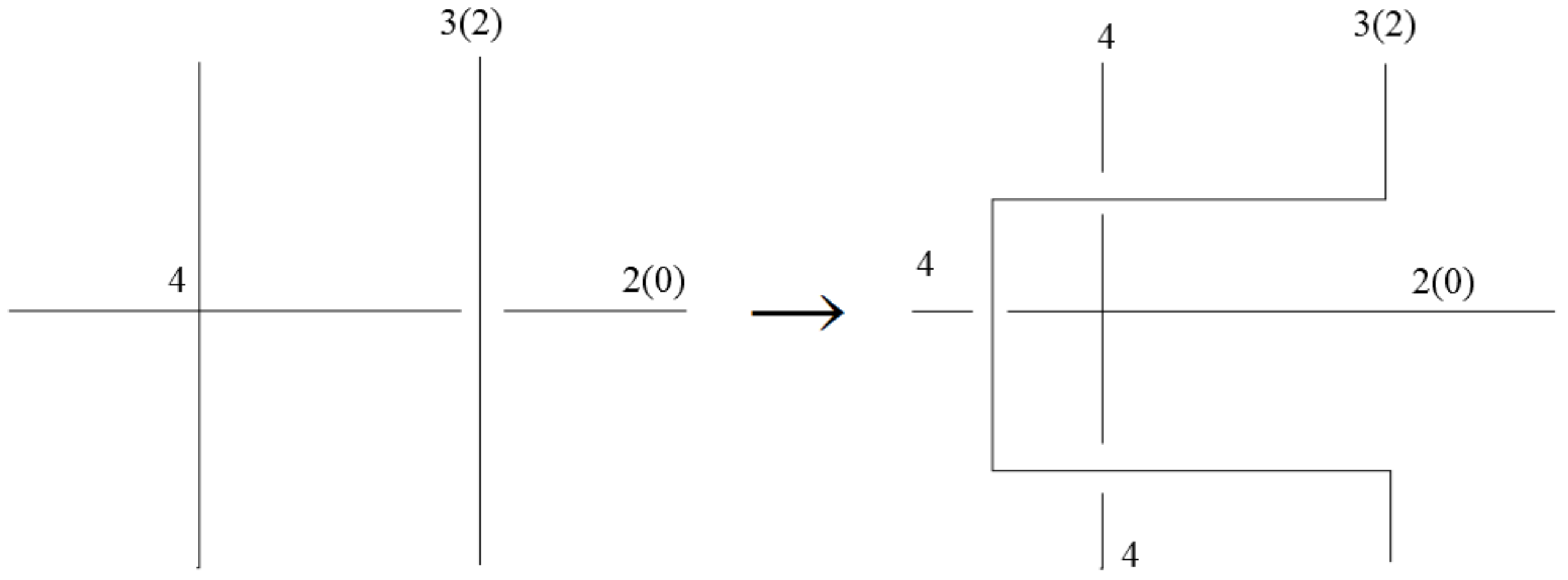}
\caption{}\label{Red01}
\end{center}
\end{figure}

Now, since $4$ is the maximum in {\it Im}$(\gamma)$, the arc colored by $4$ must be an under arc at any crossing. 
Then we will delete the crossings colored by $\{4|*|*\}$ as follows. 

In the case that the diagram has colored crossings shown in Figure \ref{Red-a}, we transform the diagram and the coloring as shown in Figure \ref{Red-b} or \ref{Red-c}.

\begin{figure}[H]
\begin{center}
\includegraphics[width=5cm,clip,bb=0 0 388 260]{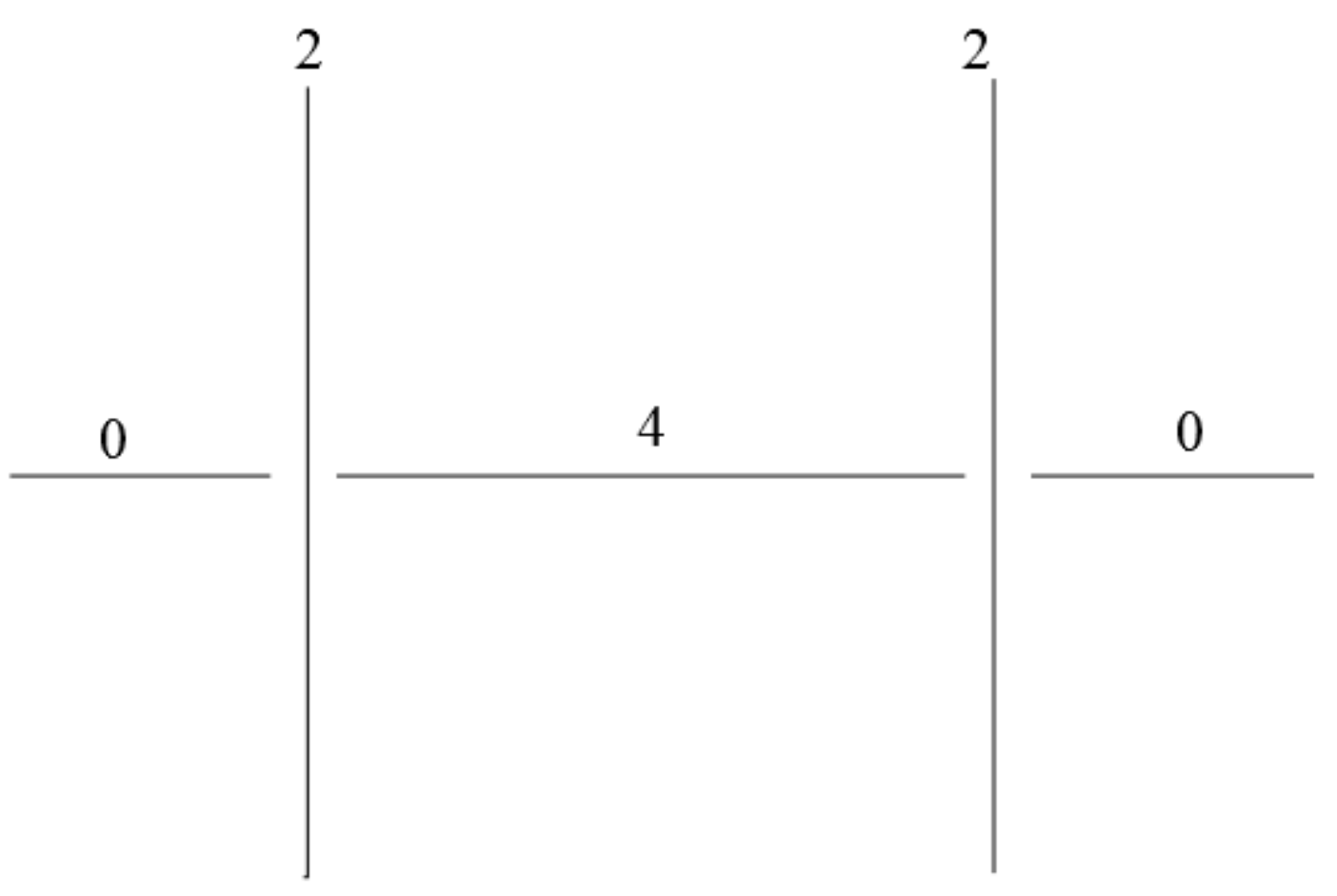}
\caption{}\label{Red-a}
\end{center}
\end{figure}

\begin{figure}[H]
\begin{center}
\includegraphics[width=9cm,clip,bb=0 0 487 242]{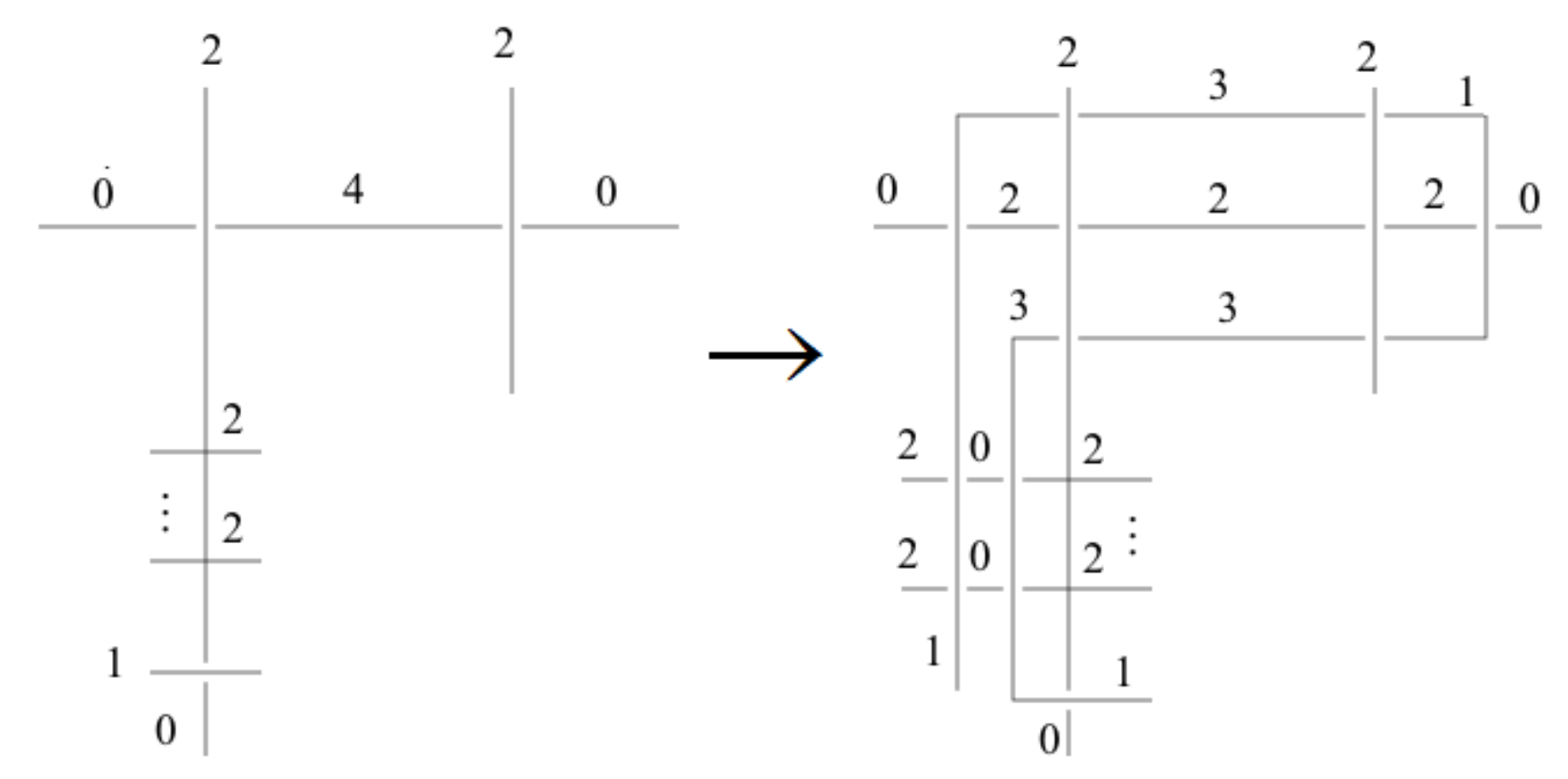}
\caption{}\label{Red-b}
\end{center}
\end{figure}

\begin{figure}[H]
\begin{center}
\includegraphics[width=9cm,clip,bb=0 0 441 242]{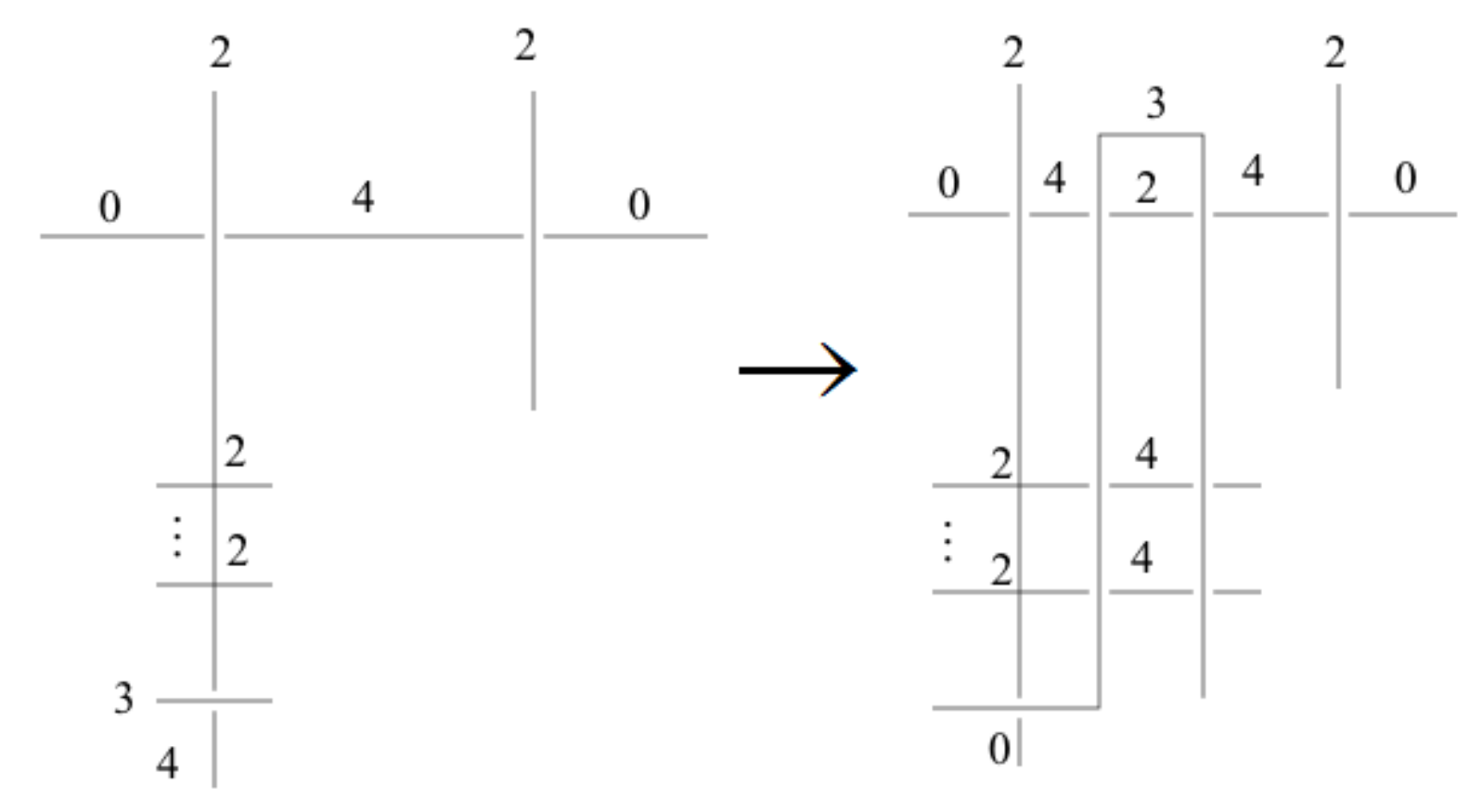}
\caption{}\label{Red-c}
\end{center}
\end{figure}

In the case that a crossing colored by $\{4|3|2\}$ exists, 
we transform the diagram and the coloring 
as illustrated in Figure \ref{Red02} and \ref{Red03}.

\begin{figure}[H]
\begin{center}
\includegraphics[width=11cm,clip,bb=0 0 765 294]{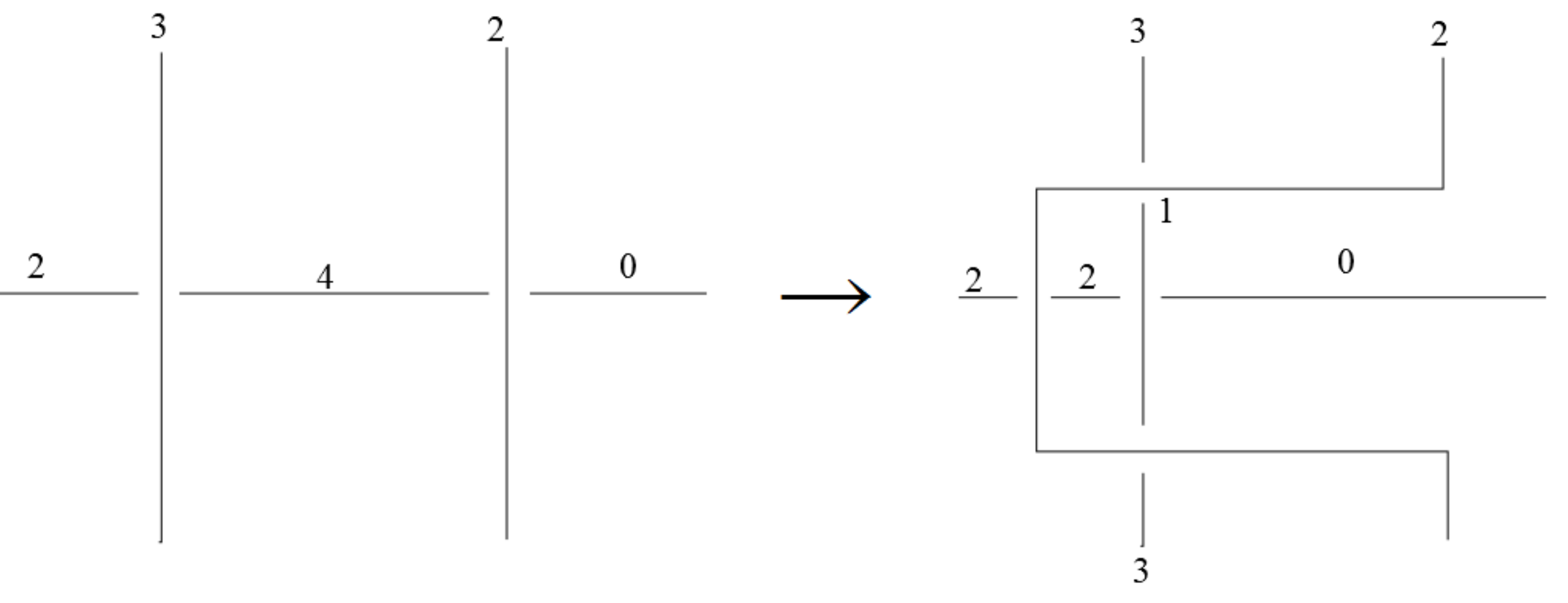}
\caption{}\label{Red02}
%
\includegraphics[width=13cm,clip,bb=0 0 846 272]{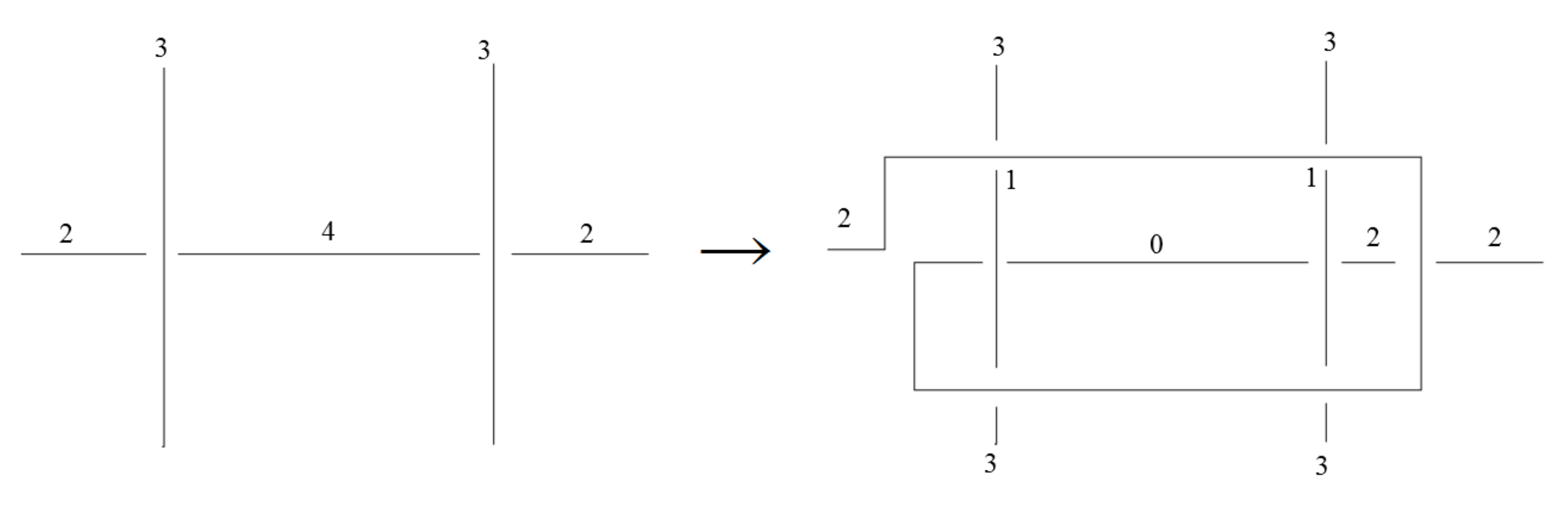}
\caption{}\label{Red03}
\end{center}
\end{figure}

Consequently we obtain a simple $\mathbb{Z}$-coloring. 
From Theorem \ref{simplethm}, 
we see that $mincol_\mathbb{Z}(L)=4$.


In the case that {\it Im}$(\gamma)=\{0,1,2,3,5\}$, first we delete crossings colored by $\{5|5|5\}$ as shown in Figure \ref{Red12}.

\begin{figure}[H]
\begin{center}
\includegraphics[width=12cm,clip,bb=0 0 712 270]{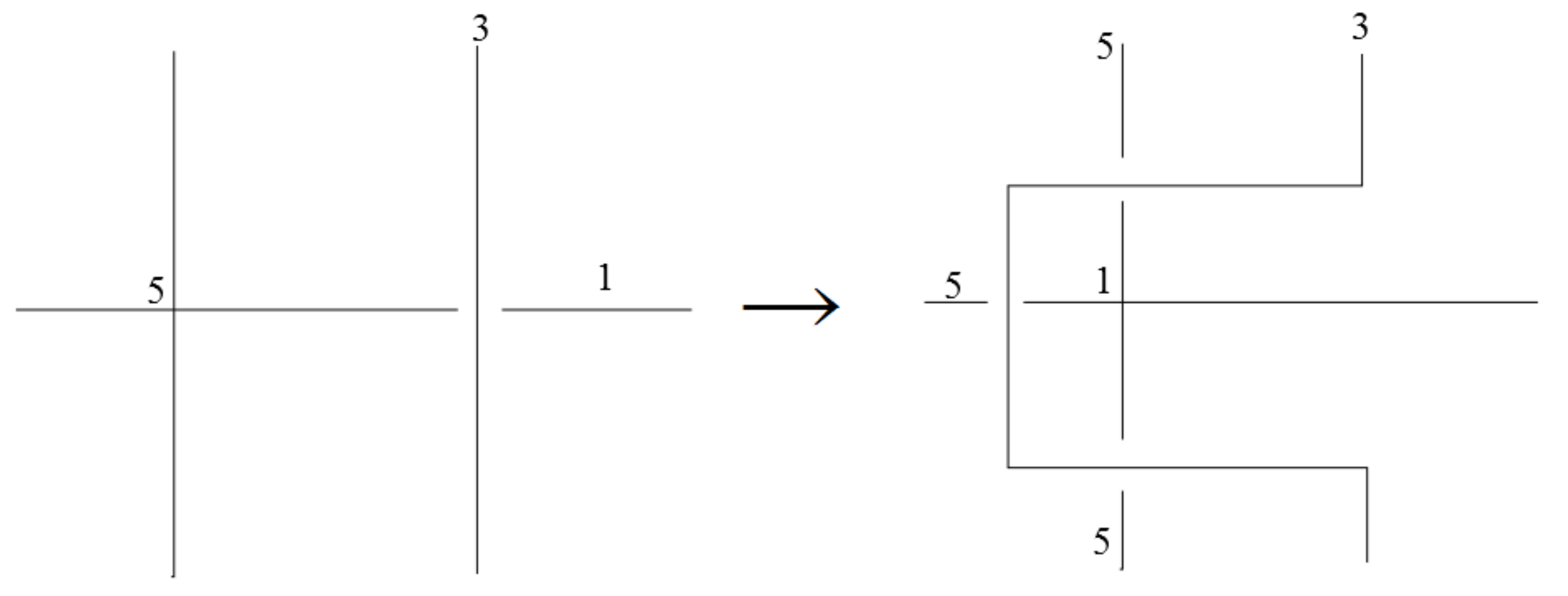}
\caption{}\label{Red12}
\end{center}
\end{figure}

Next we focus on a crossing colored by $\{1|3|5\}$. 
On the extension of the arc colored by $3$, 
a crossing colored by $\{3|2|1\}$ exists. 
Then we transform the diagram as shown in Figure \ref{Red13}.

\begin{figure}[H]
\begin{center}
\includegraphics[width=13cm,clip,bb=0 0 847 406]{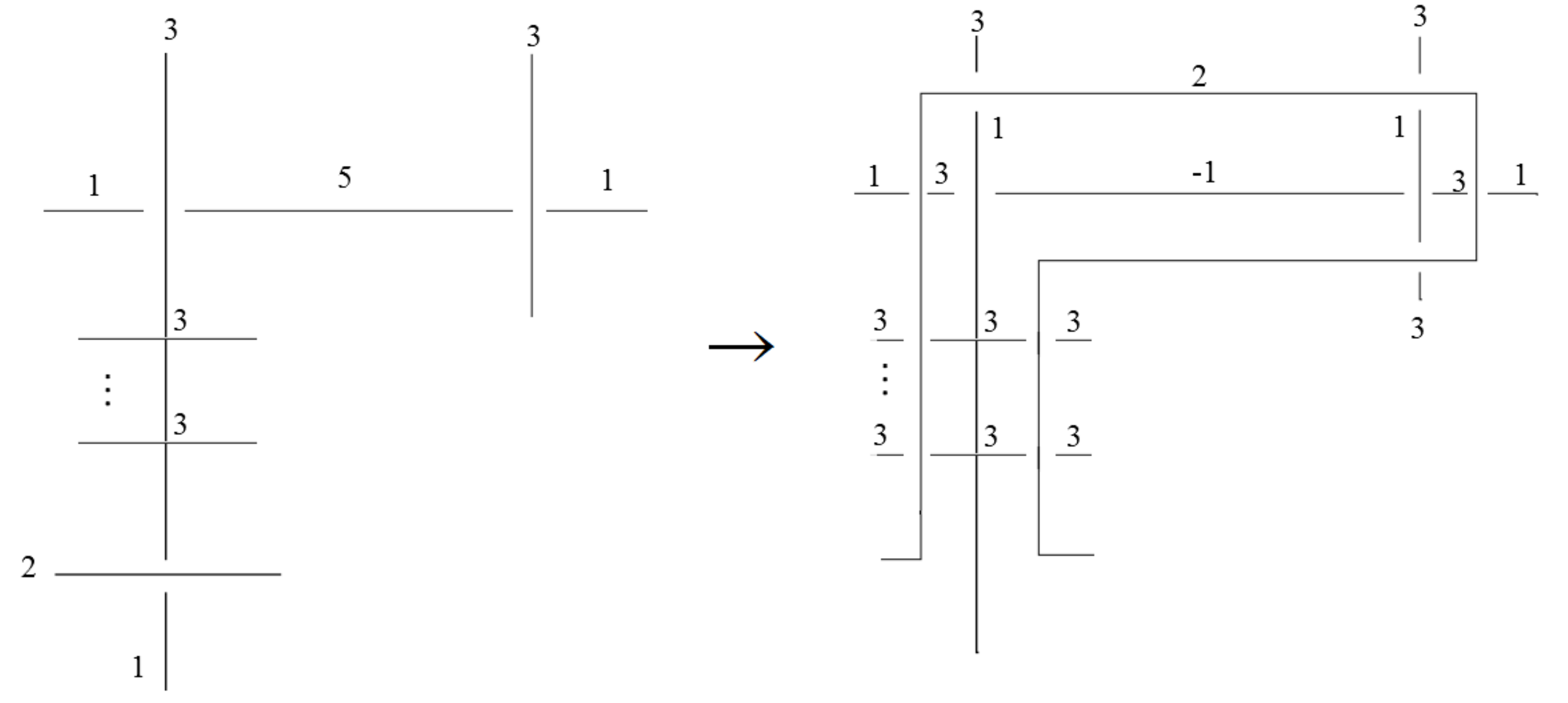}
\caption{}\label{Red13}
\end{center}
\end{figure}

Then we see that the diagram is colored by $\{-1,0,1,2,3\}$. 
We add $1$ to all the colors, and then we obtain the colors $\{0,1,2,3,4\}$. 
Then we reduce this case to the case of {\it Im}$(\gamma)=\{0,1,2,3,4\}$, and we see $mincol_\mathbb{Z}(L)=4$.\\

In the case that {\it Im}$(\gamma)=\{0,2,3,4,5\}$, 
we add $-5$ and multiply $-1$ to all colors for $\gamma$. 
Then this case is reduced to the case of {\it Im}$(\gamma)=\{0,1,2,3,5\}$.\\

In the case that {\it Im}$(\gamma)=\{0,3,4,5,6\}$, 
the palette graph associated with $\gamma$ is shown in Figure \ref{palette03456}.

\begin{figure}[H]
\begin{center}
\includegraphics[width=3.5cm,clip,bb=0 0 225 333]{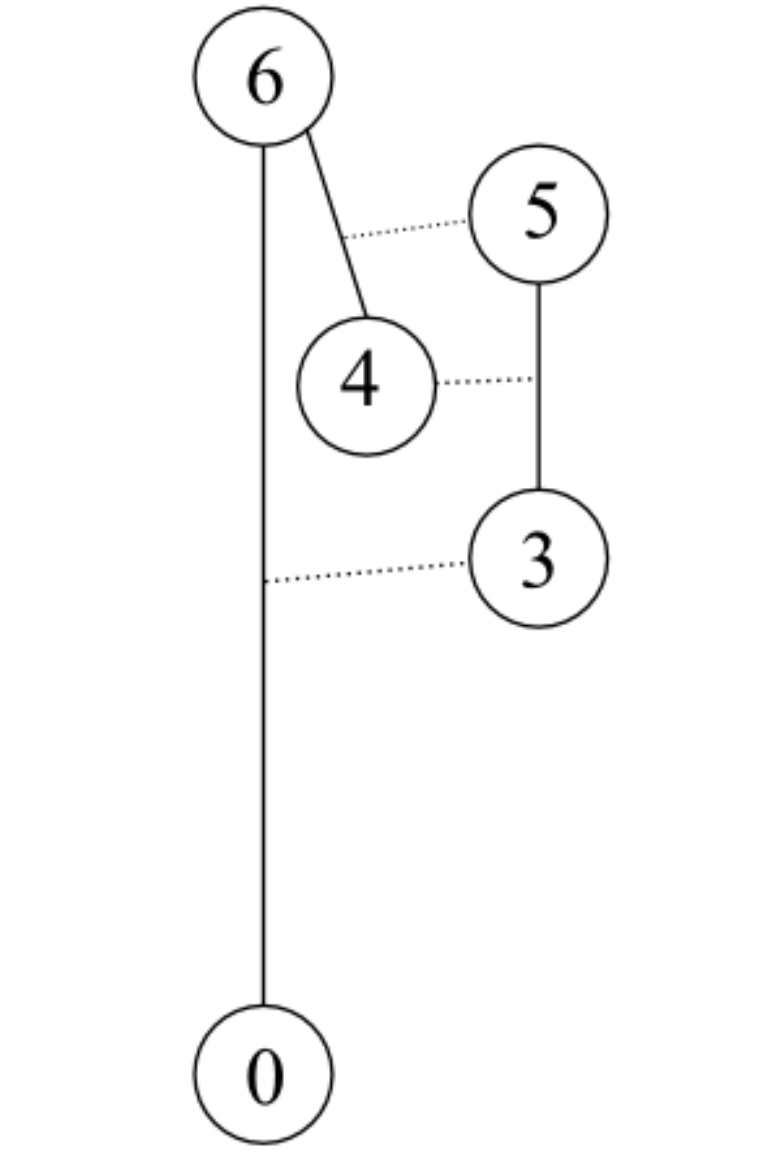}
\caption{}\label{palette03456}
\end{center}
\end{figure}

We focus on an arc colored by $0$. 
First we delete a crossing colored by $\{0|0|0\}$ as shown in Figure \ref{Red14}.

\begin{figure}[H]
\begin{center}
\includegraphics[width=12cm,clip,bb=0 0 688 263]{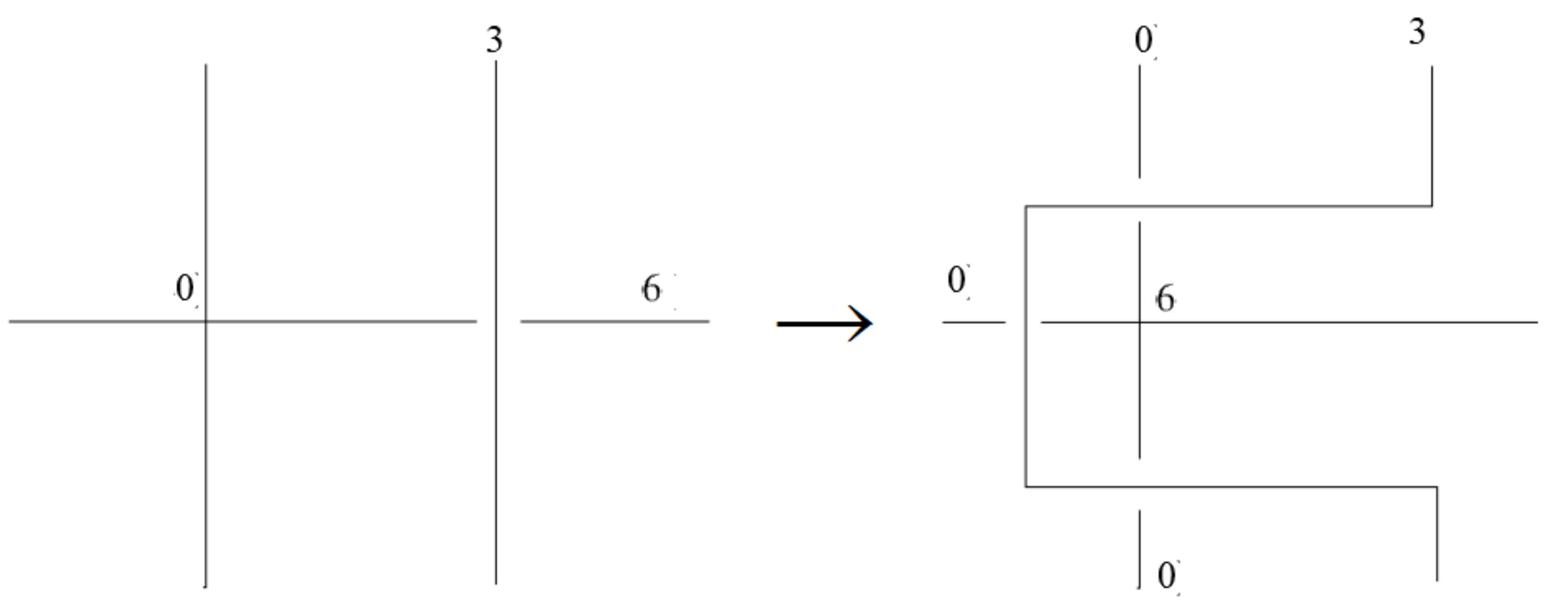}
\caption{}\label{Red14}
\end{center}
\end{figure}

Next we delete a crossing colored by $\{0|3|6\}$ as shown in Figure \ref{Red15}.

\begin{figure}[H]
\begin{center}
\includegraphics[width=13cm,clip,bb=0 0 566 273]{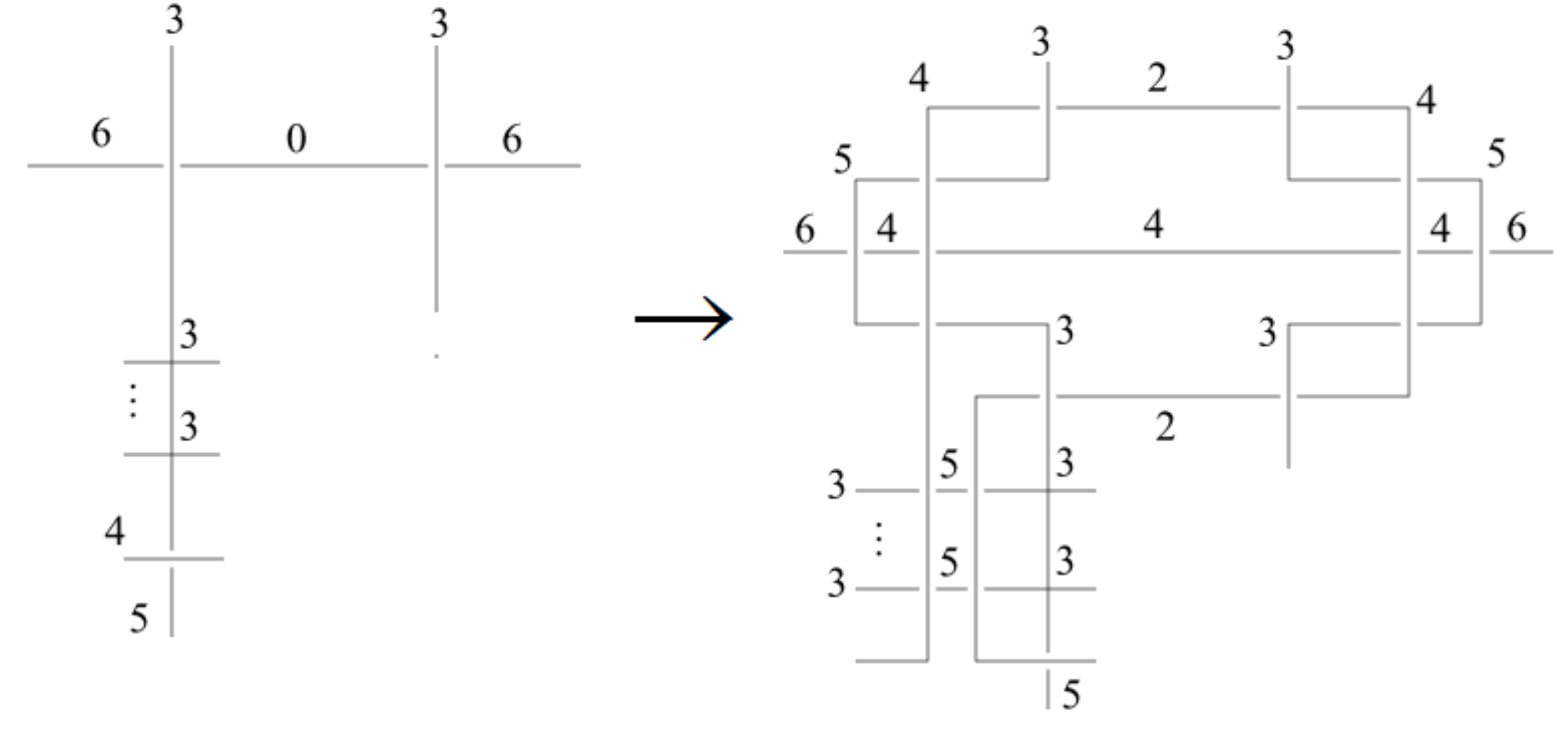}
\caption{}\label{Red15}
\end{center}
\end{figure}

Then we obtain another $\mathbb{Z}$-coloring $\gamma'$ such that {\it Im}$(\gamma')=\{2,3,4,5,6\}$. 
We add $-2$ to all the colors for $\gamma'$. 
Then we regard that this case is equivalence to the case {\it Im}$(\gamma)=\{0,1,2,3,4\}$.

In the case that {\it Im}$(\gamma)=\{0,1,2,3,6\}$, 
we add $-6$ and multiply $-1$ to all colors for $\gamma$. 
Then this case is reduced to the case of {\it Im}$(\gamma)=\{0,3,4,5,6\}$.
\bigskip


In the case that {\it Im}$(\gamma)=\{0,1,2,4,7\}$, 
the palette graph associated with $\gamma$ is shown in Figure \ref{palette01247a}.

\begin{figure}[H]
\begin{center}
\includegraphics[width=3.5cm,clip,bb=0 0 270 382]{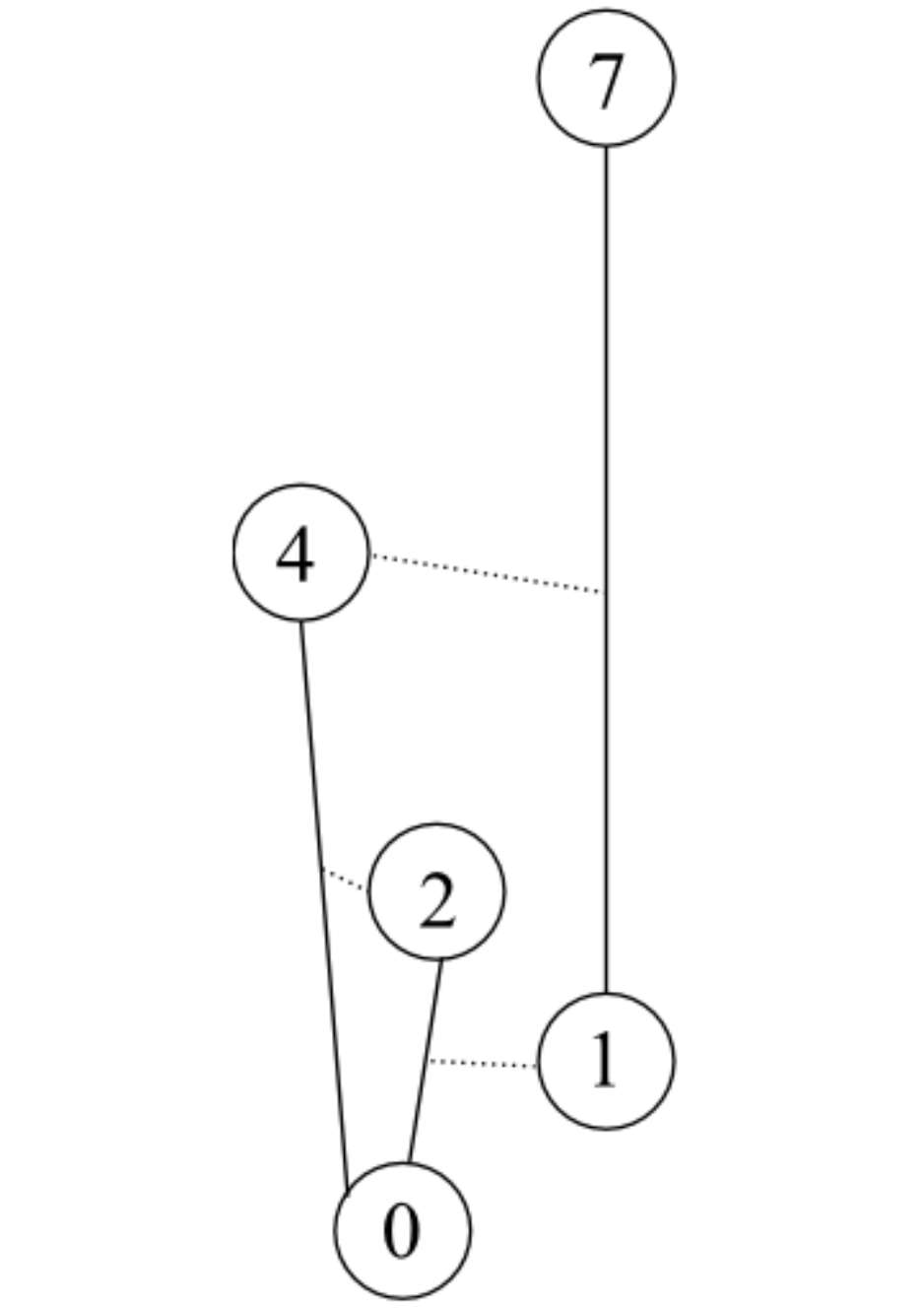}
\caption{}\label{palette01247a}
\end{center}
\end{figure}

First we delete a crossing colored by $\{0|0|0\}$. 
We transform the diagram and the coloring depicted in Figure \ref{Red17a}. 
In the figure, $1(2)$ and $2(4)$ indicate two cases of the colors of the corresponding arcs, that is, when one is 1 (resp. 2), then the other is 2 (resp. 4).

\begin{figure}[H]
\begin{center}
\includegraphics[width=14cm,clip,bb=0 0 740 273]{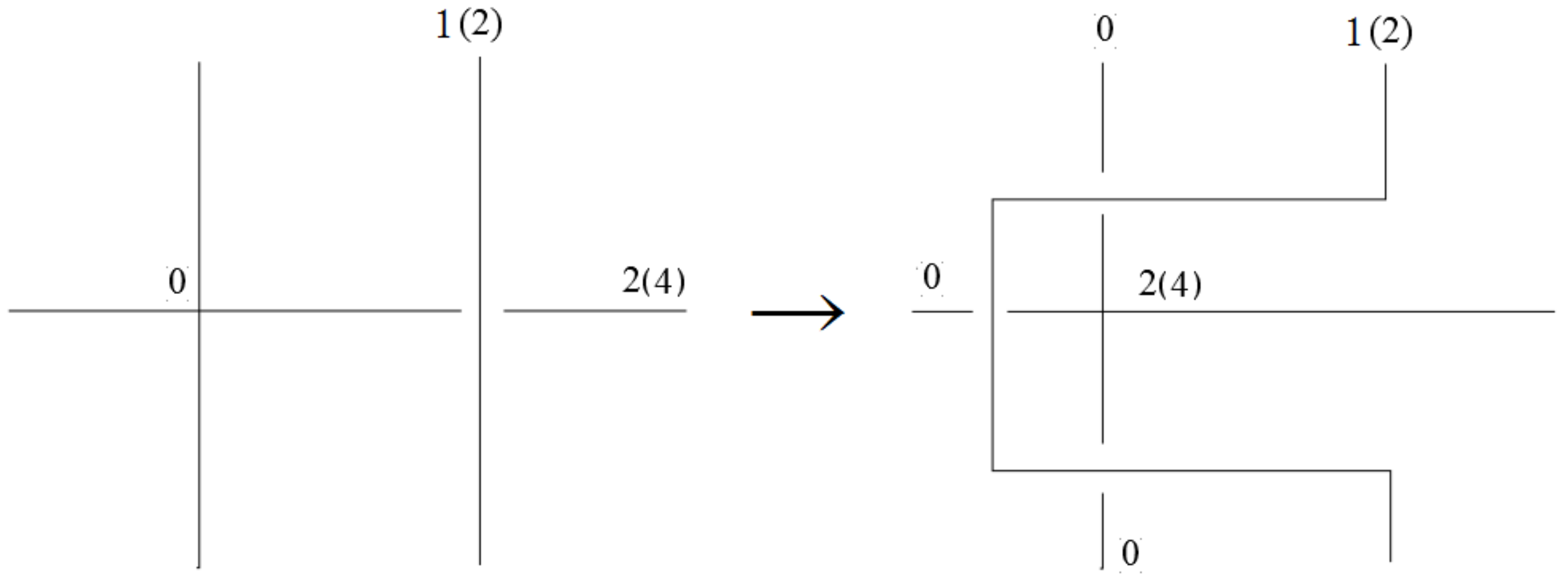}
\caption{}\label{Red17a}
\end{center}
\end{figure}

Next we delete a crossing colored by $\{0|2|4\}$. We transform the diagram and the coloring as follows.

\medskip

\noindent
[1] In the case that, on the extension of the arc colored by $0$, a crossing colored by $\{0|2|4\}$ exists, we transform the diagram as shown in Figure \ref{Red17}. 

\begin{figure}[H]
\begin{center}
\includegraphics[width=13cm,clip,bb=0 0 507 252]{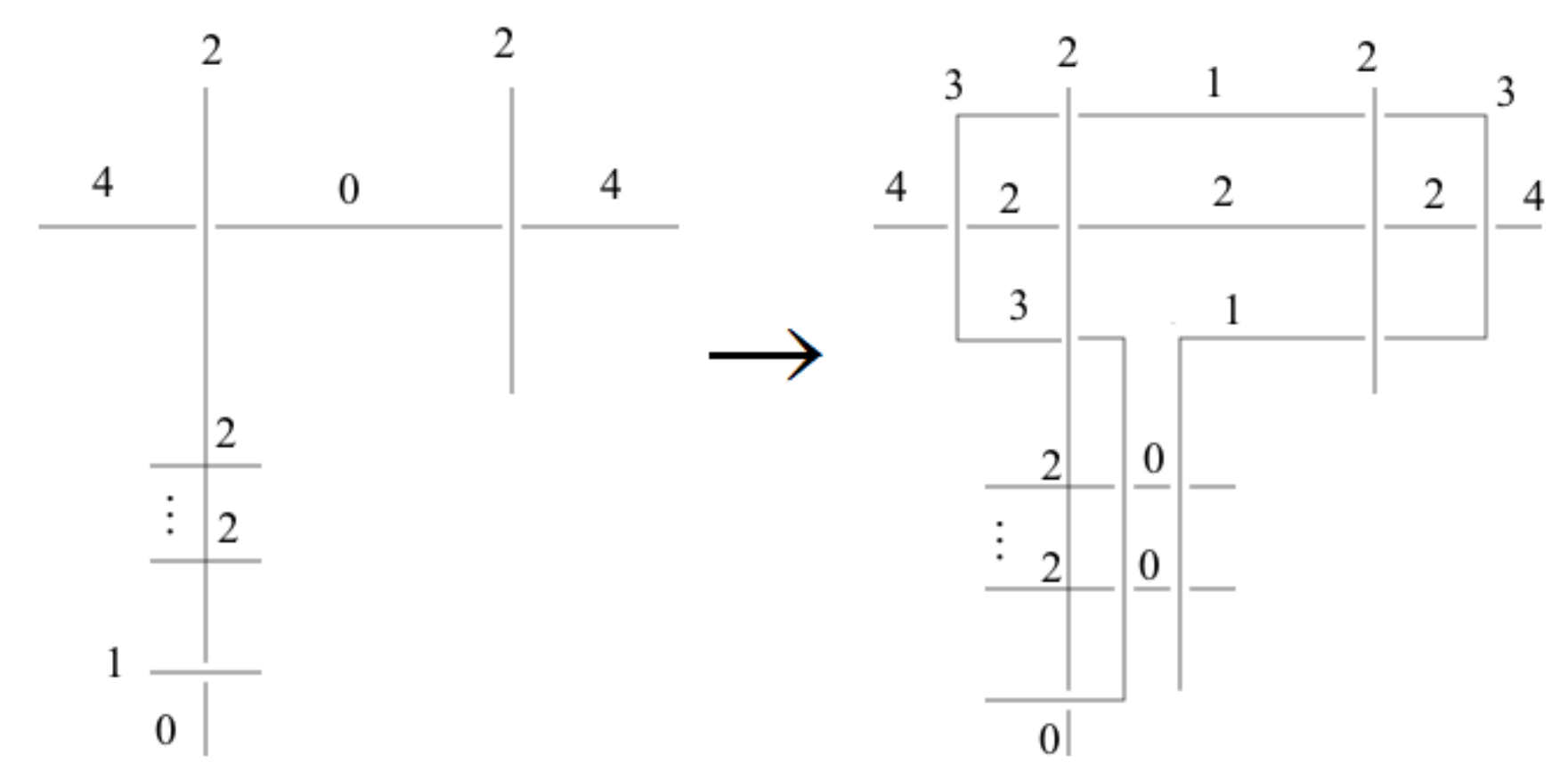}
\caption{}\label{Red17}
\end{center}
\end{figure}

\noindent
[2] In the case that, on the extension of the arc colored by $0$, 
a crossing colored by $\{0|1|2\}$ exists, 
then we transform the diagram as shown in Figure \ref{Red16}.

\begin{figure}[H]
\begin{center}
\includegraphics[width=12cm,clip,bb=0 0 765 294]{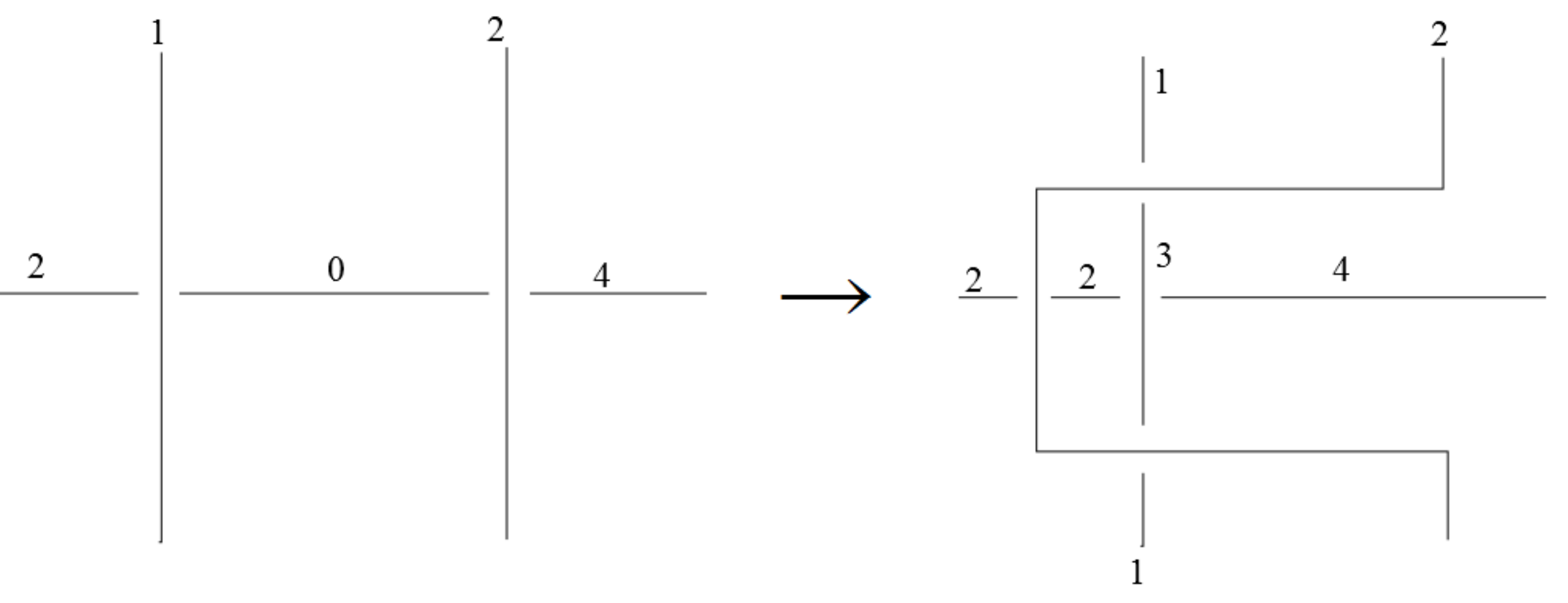}
\caption{}\label{Red16}
\end{center}
\end{figure}

After transformations [1] and [2], we obtain a palette graph as shown in Figure \ref{palette01247b}.

\begin{figure}[H]
\begin{center}
\includegraphics[width=4cm,clip,bb=0 0 302 385]{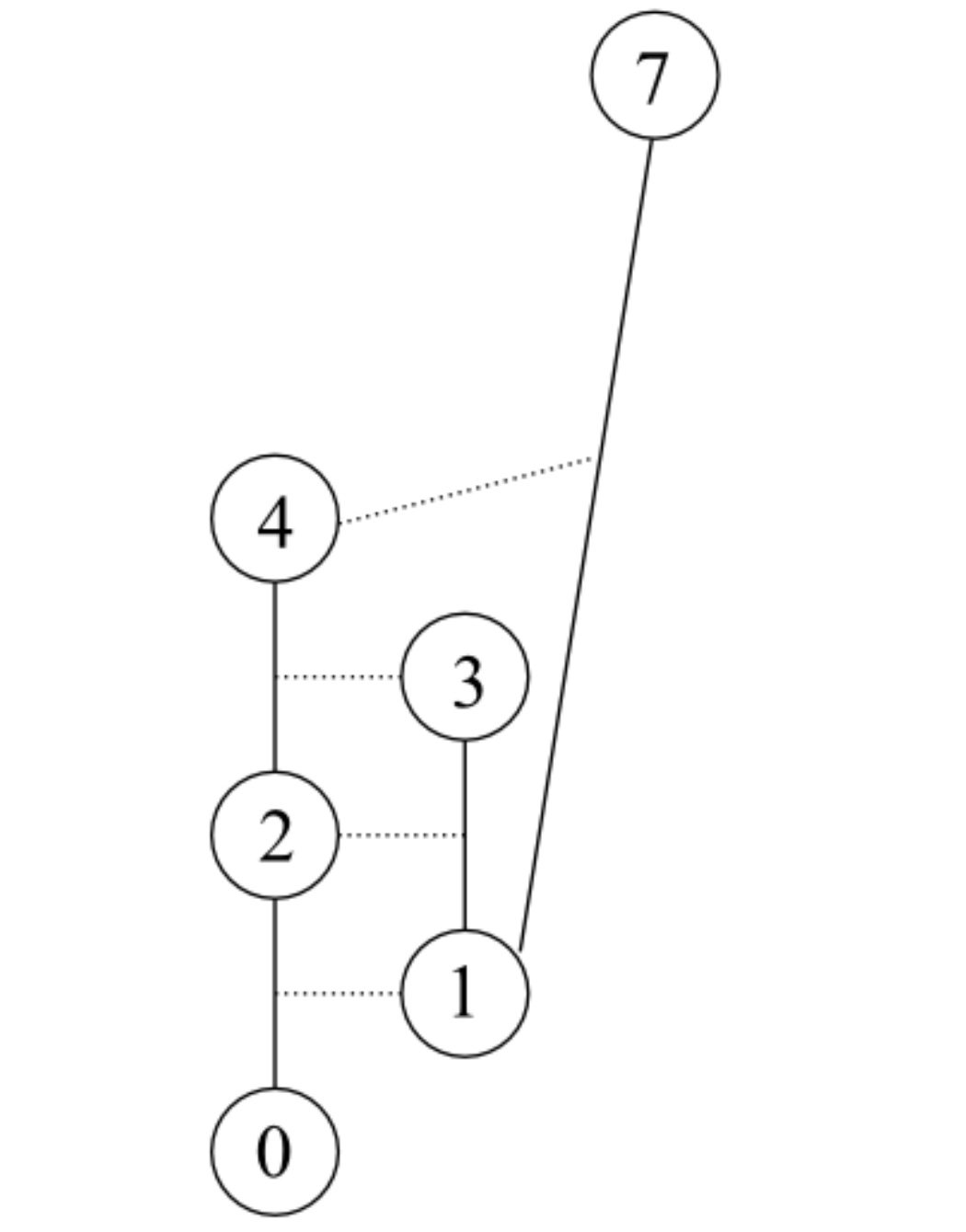}
\caption{}\label{palette01247b}
\end{center}
\end{figure}

\noindent
[3] We now delete a crossing colored by $\{7|4|1\}$. 
On the extension of the arc colored by $4$, 
a crossing colored by $\{4|3|2\}$ exists. 
Then we transform the diagram as shown in Figure \ref{Red18}.

\bigskip\bigskip

\begin{figure}[H]
\begin{center}
\includegraphics[width=13cm,clip,bb=0 0 606 384]{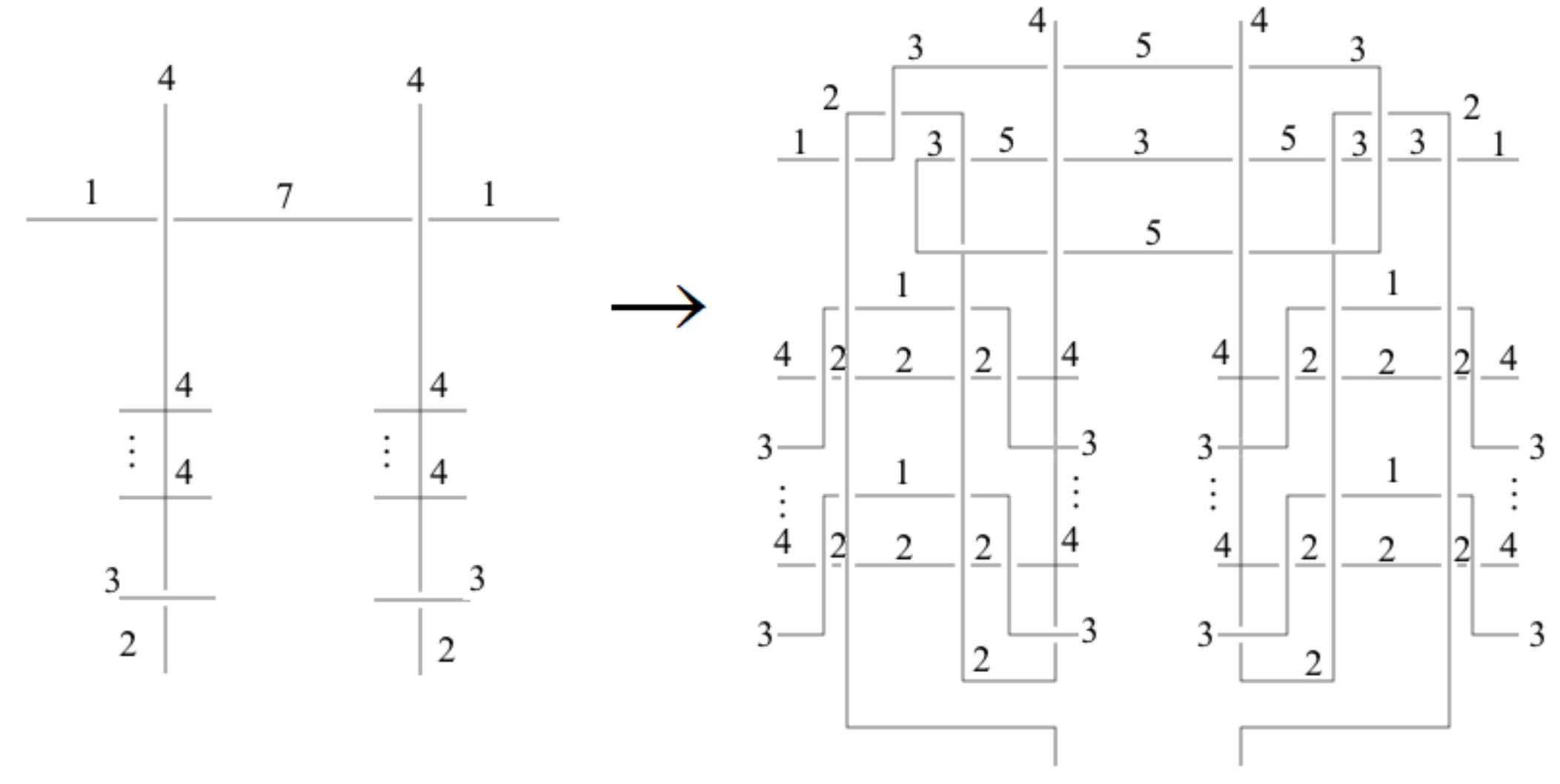}
\caption{}\label{Red18}
\end{center}
\end{figure}

Then we obtain another simple $\mathbb{Z}$-coloring $\gamma'$ such that {\it Im}$(\gamma')=\{0,1,2,3,4,5\}$.  
From Theorem 4.2, we see $mincol_\mathbb{Z}(L)=4$.\\

In the case that {\it Im}$(\gamma)=\{0,3,5,6,7\}$, 
we add $-7$ and multiply $-1$ to all colors for $\gamma$. 
Then this case is reduced to the case of {\it Im}$(\gamma)=\{0,1,2,4,7\}$.

Consequently we have completed our proof of Theorem \ref{thm42}. 
\end{proof}

\section{Links with simple $\mathbb{Z}$-colorings}

In this section, we present diagrams of the links with at most 10 crossings with simple $\mathbb{Z}$-colorings. 
We first use the diagrams given in \cite{linkinfo}. For each link, if the diagram admits
only a non-simple $\mathbb{Z}$-coloring, then we modify it to admit a simple one.

\begin{figure}[H]
\begin{center}
\includegraphics[width=10cm,clip,bb = 0 0 457 341]{ap1.pdf}
\caption{$L8n6$}\label{L8n6}
\includegraphics[width=10cm,clip,bb=0 0 576 432]{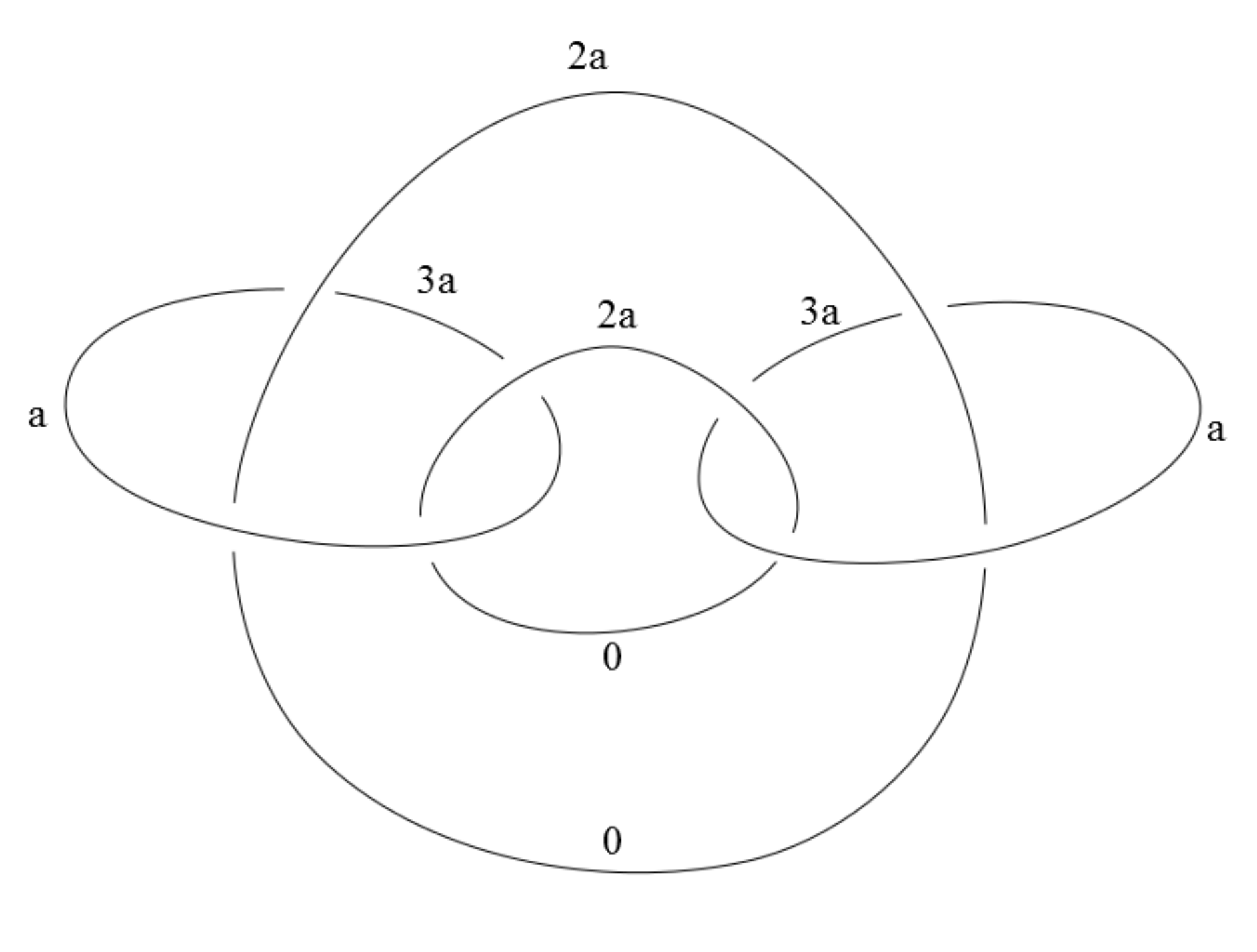}
\caption{$L8n8$}\label{L8n8}
\end{center}
\end{figure}

\begin{figure}[H]
\begin{center}
\includegraphics[width=9cm,clip,bb=0 0 502 383]{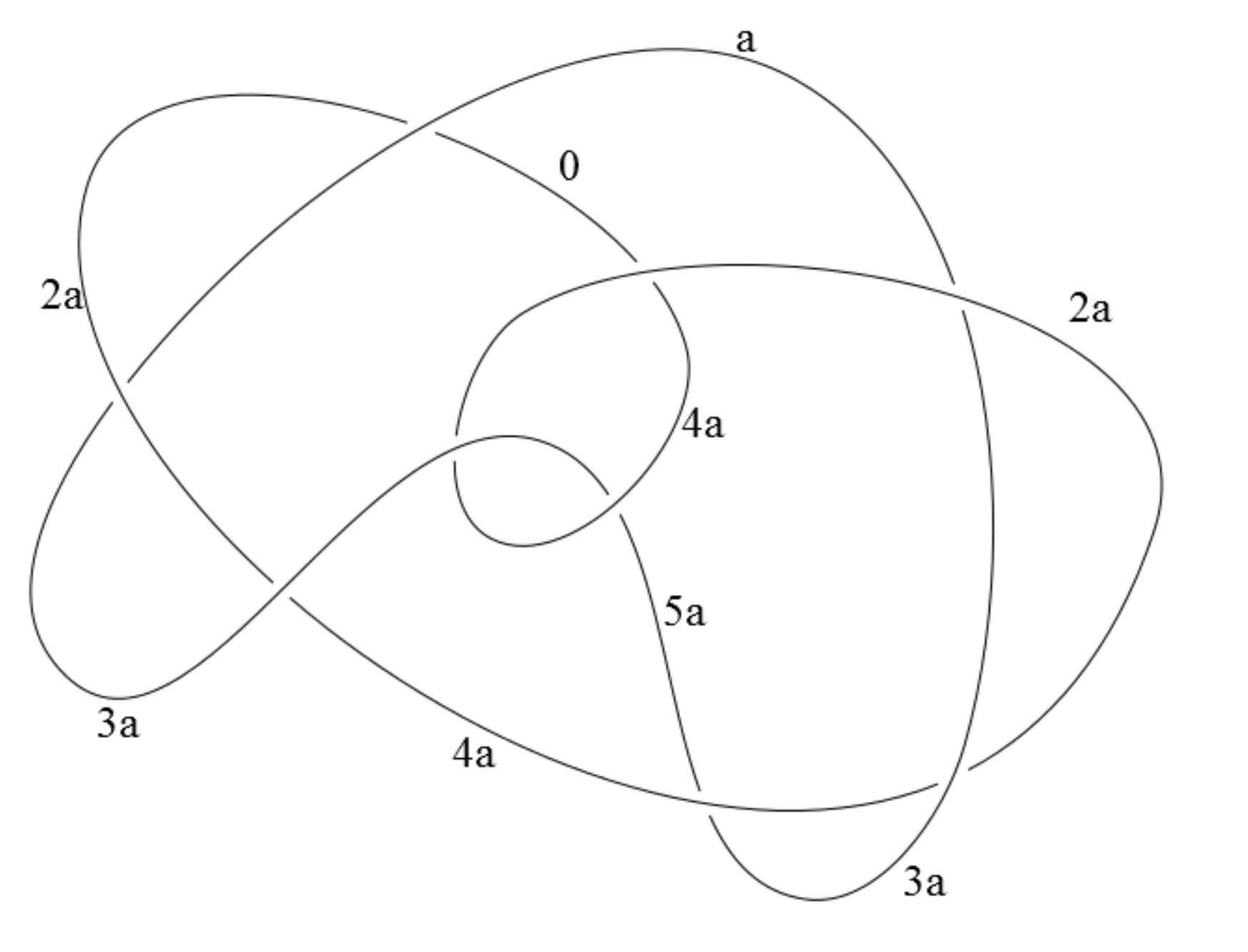}
\caption{$L9n18$}\label{L9n18}
\includegraphics[width=7.8cm,clip,bb=0 0 404 410]{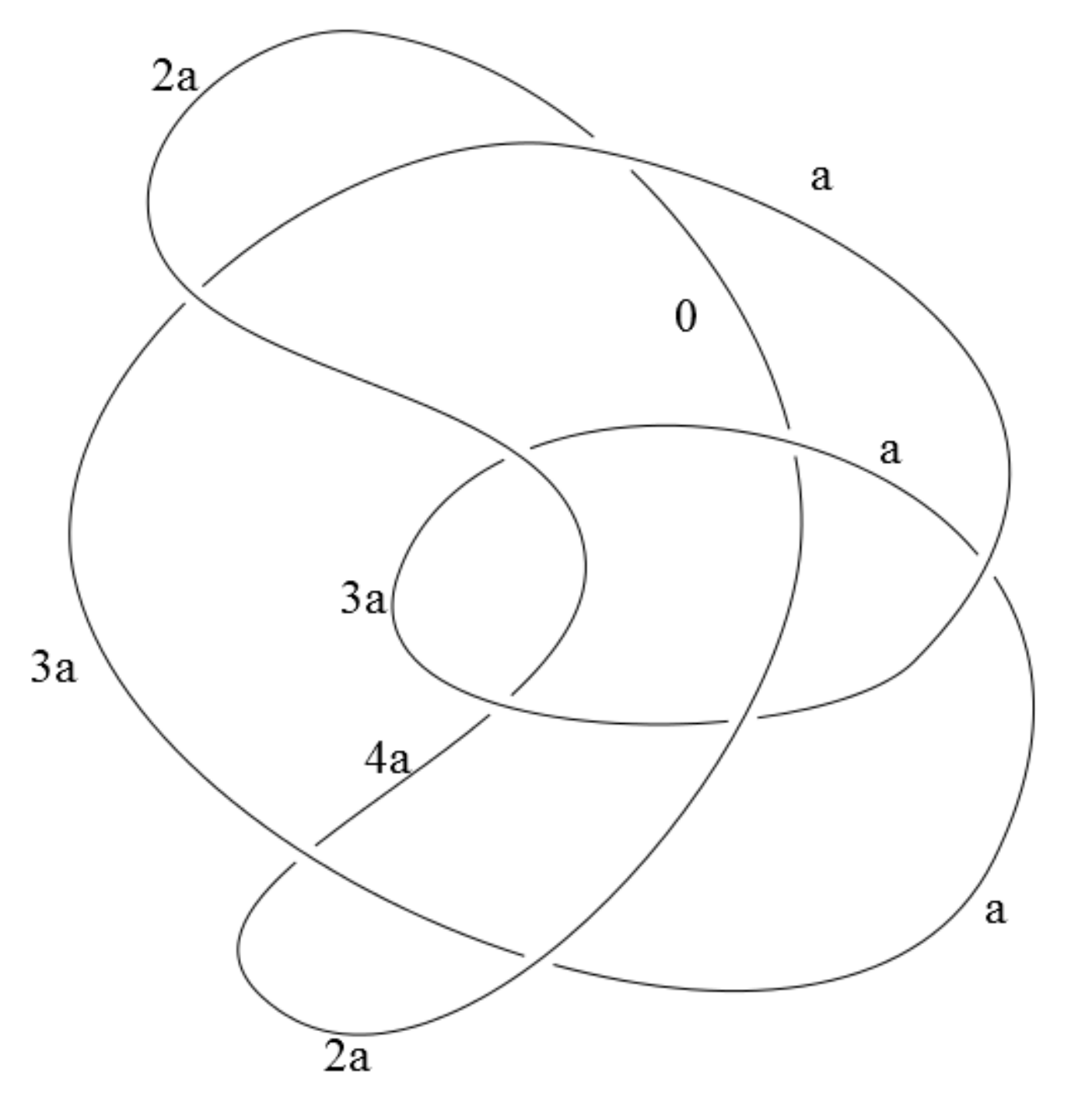}
\caption{$L9n19$}\label{L9n19}
\end{center}
\end{figure}

\begin{figure}[H]
\begin{center}
\includegraphics[width=7cm,clip,bb=0 0 415 376]{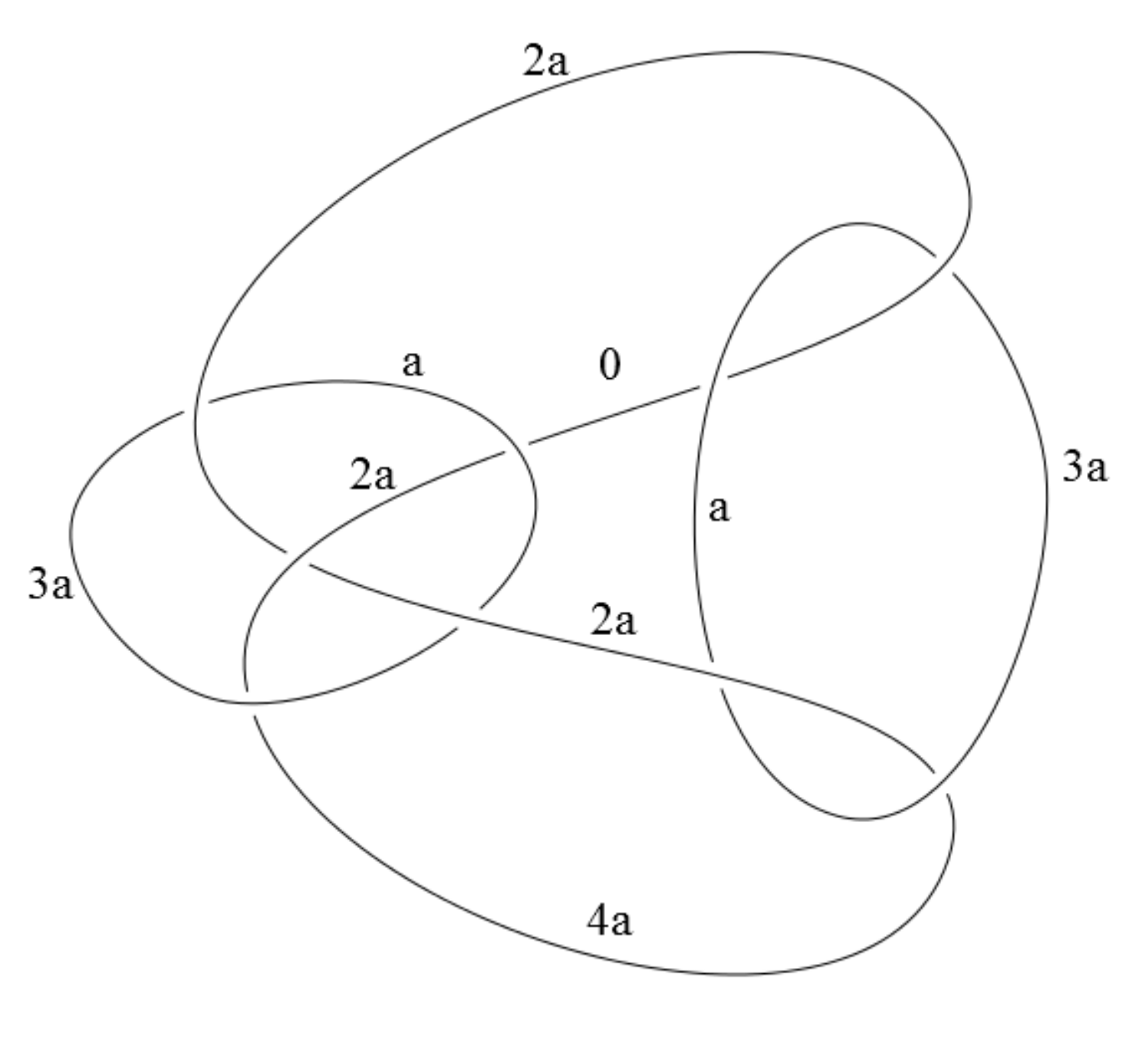}
\caption{$L9n27$}\label{L9n27}
\includegraphics[width=7cm,clip,bb=0 0 453 414]{ap6.pdf}
\caption{$L10n32$ with non-simple $\mathbb{Z}$-coloring}\label{L10n32}
\includegraphics[width=8cm,clip,bb=0 0 600 450]{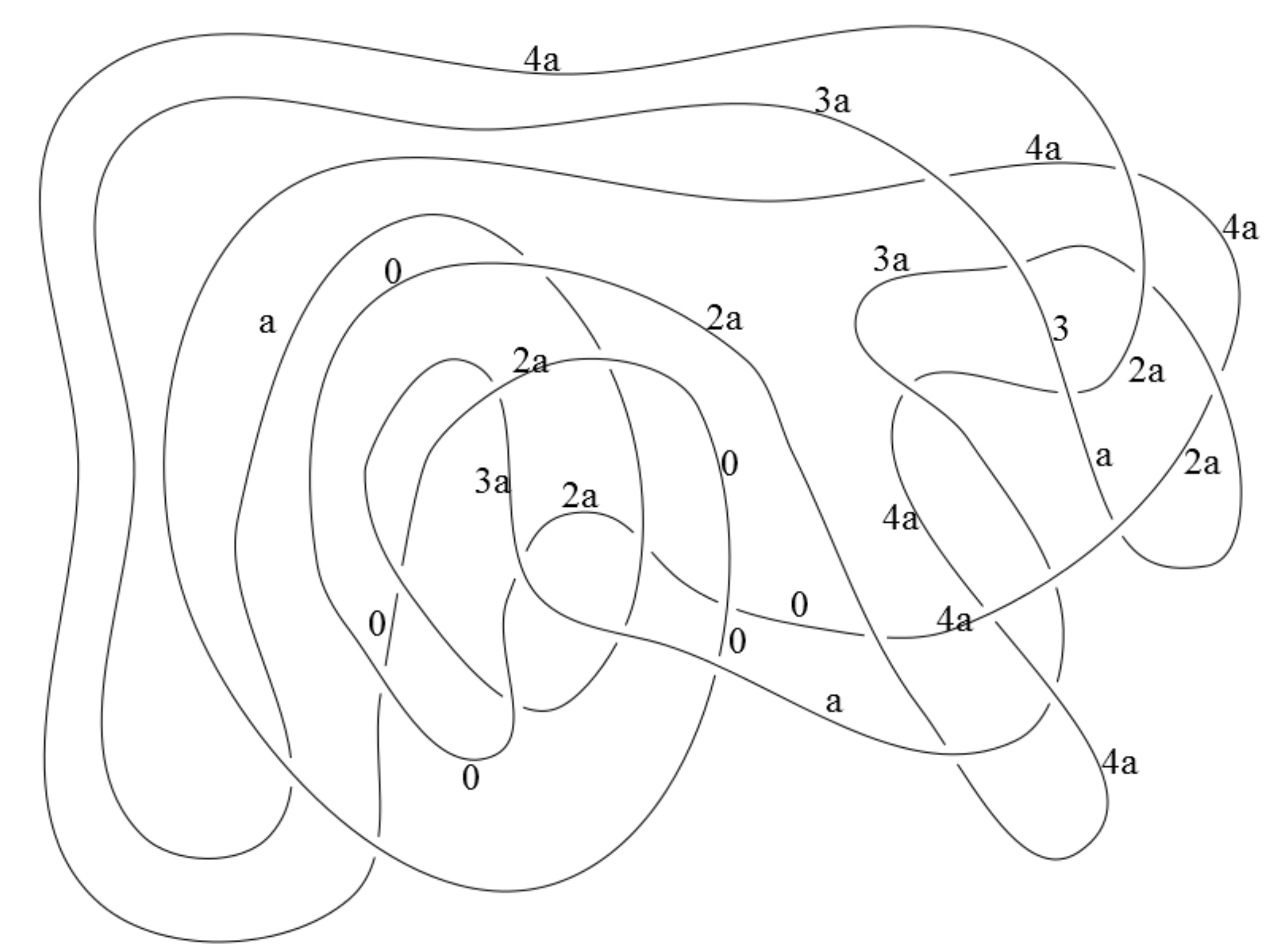}
\caption{$L10n32$ with simple $\mathbb{Z}$-coloring}\label{L10n32-2}
\end{center}
\end{figure}

\begin{figure}[H]
\begin{center}
\includegraphics[width=7cm,clip,bb=0 0 464 375]{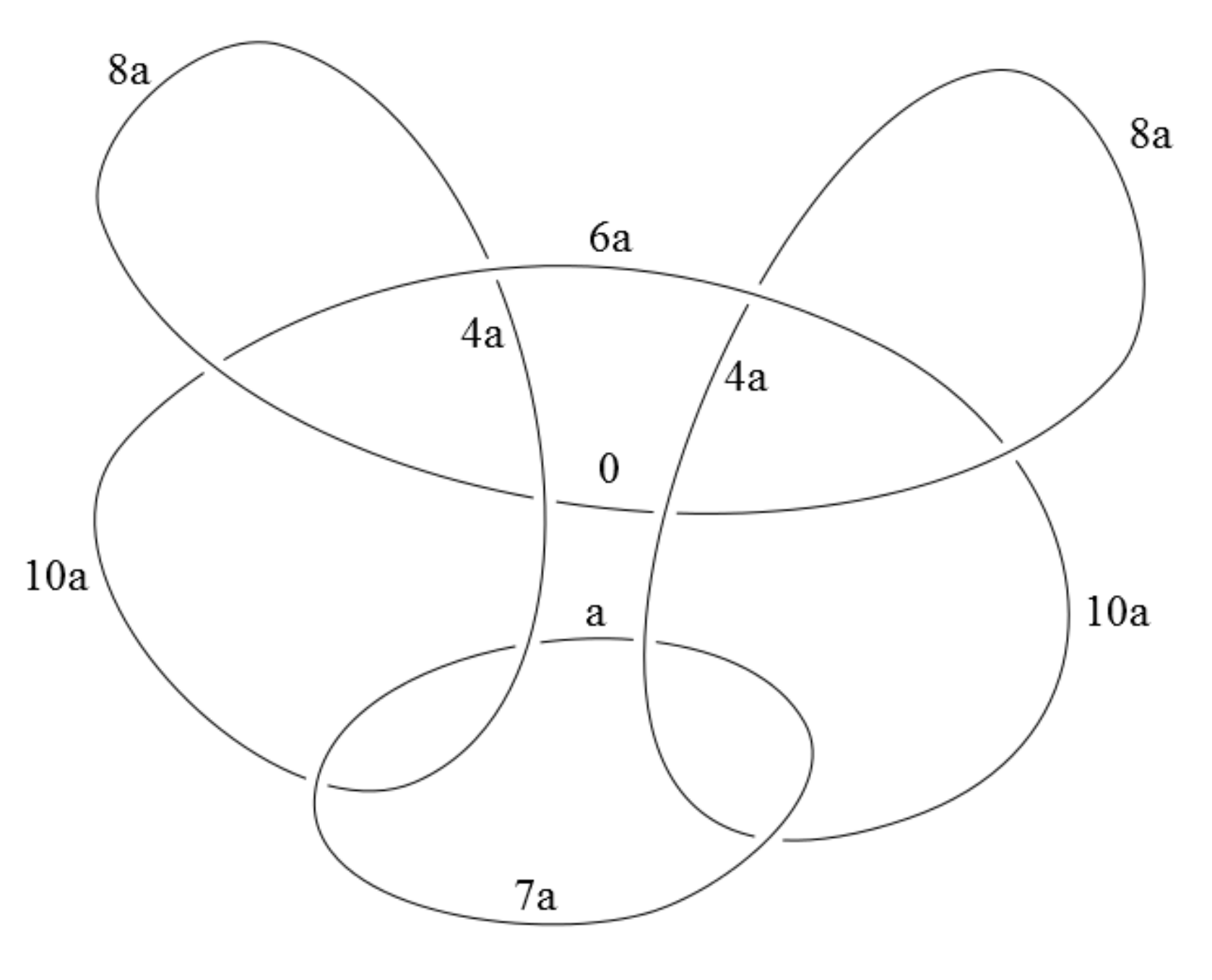}
\caption{$L10n36$}\label{L10n36}
\includegraphics[width=9cm,clip,bb=0 0 600 450]{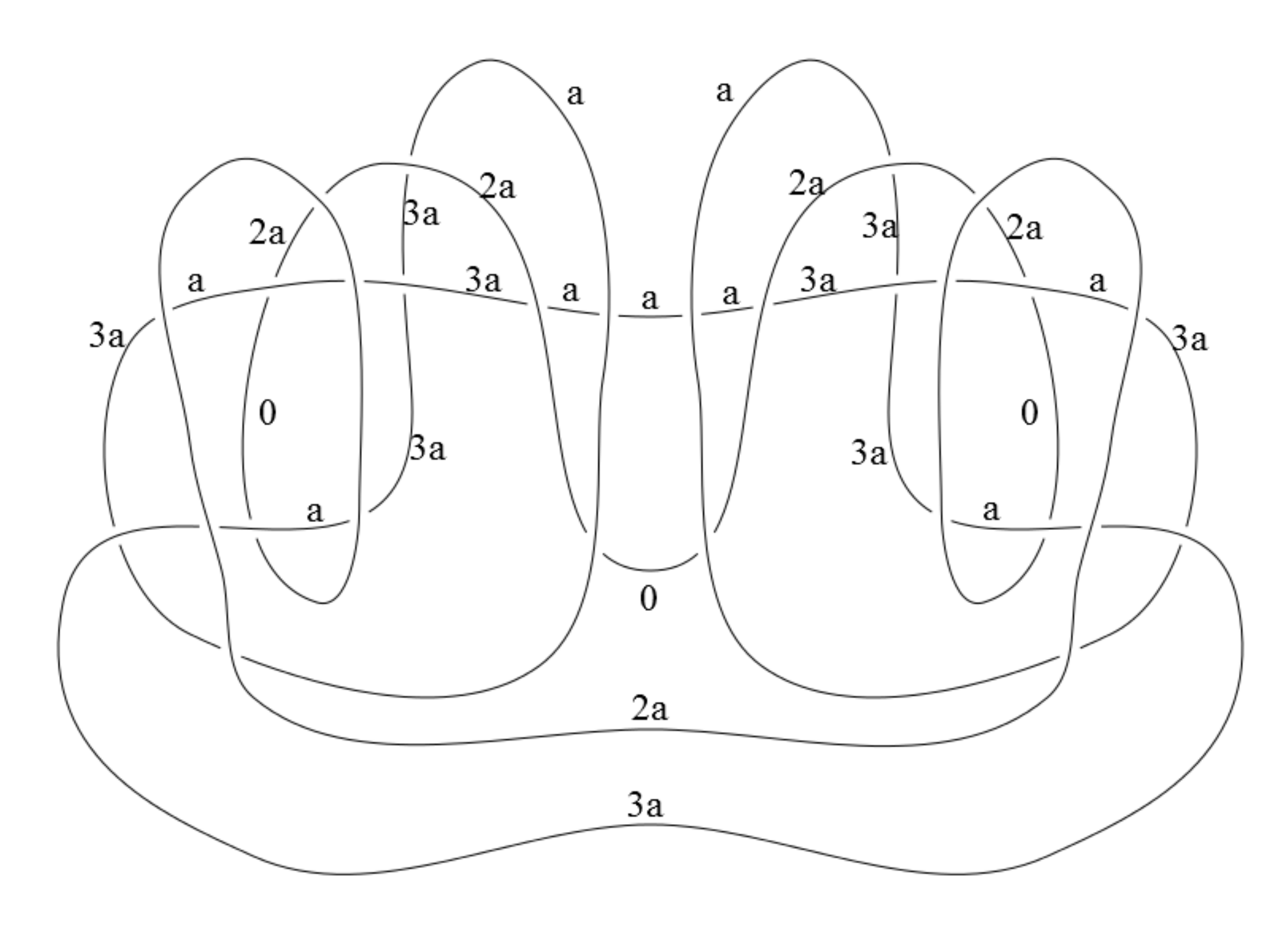}
\caption{$L10n36$ with simple $\mathbb{Z}$-coloring}\label{L10n36-2}
\includegraphics[width=8cm,clip,bb=0 0 489 397]{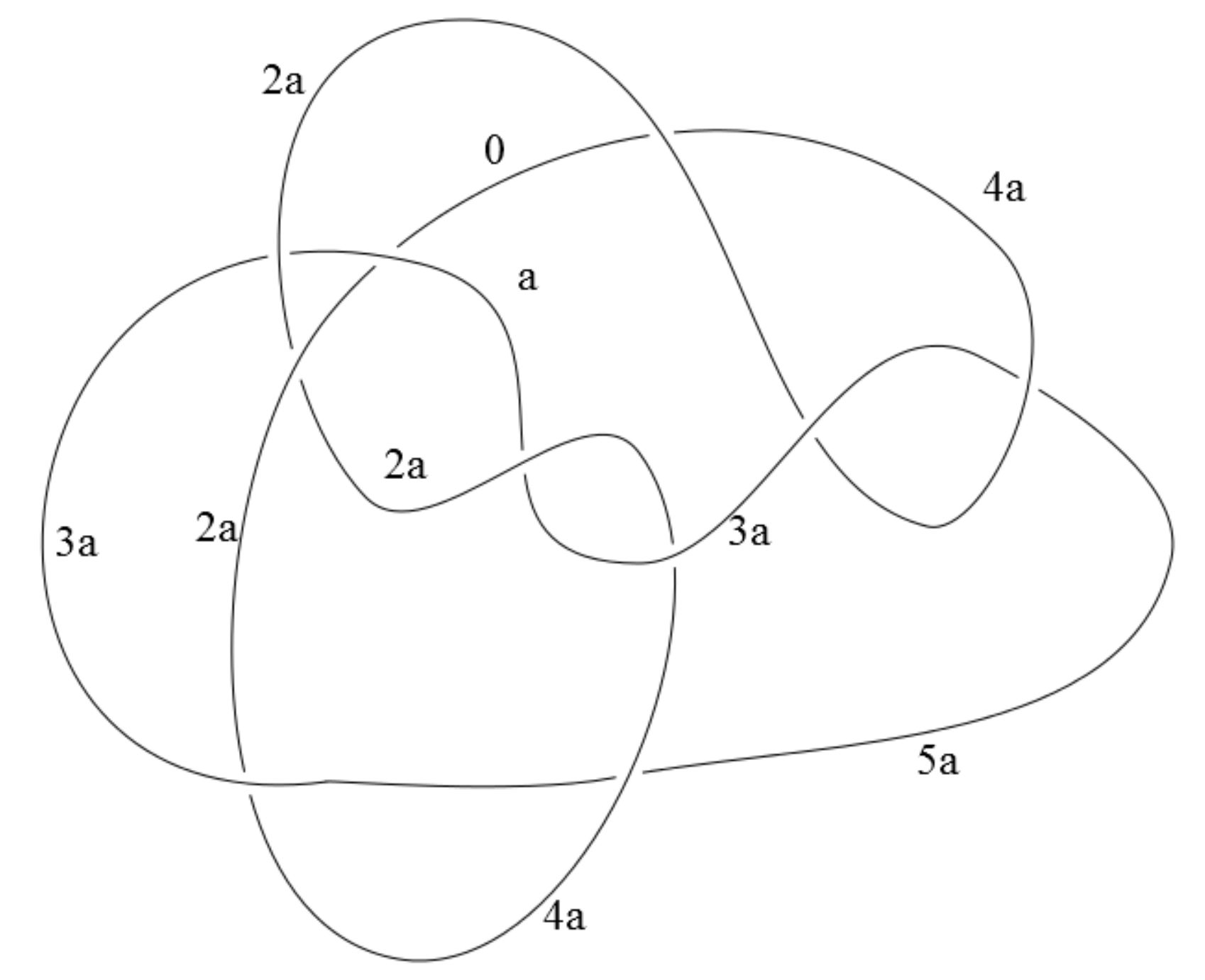}
\caption{$L10n56$}\label{L10n56}
\end{center}
\end{figure}

\begin{figure}[H]
\begin{center}
\includegraphics[width=7cm,clip,bb=0 0 427 373]{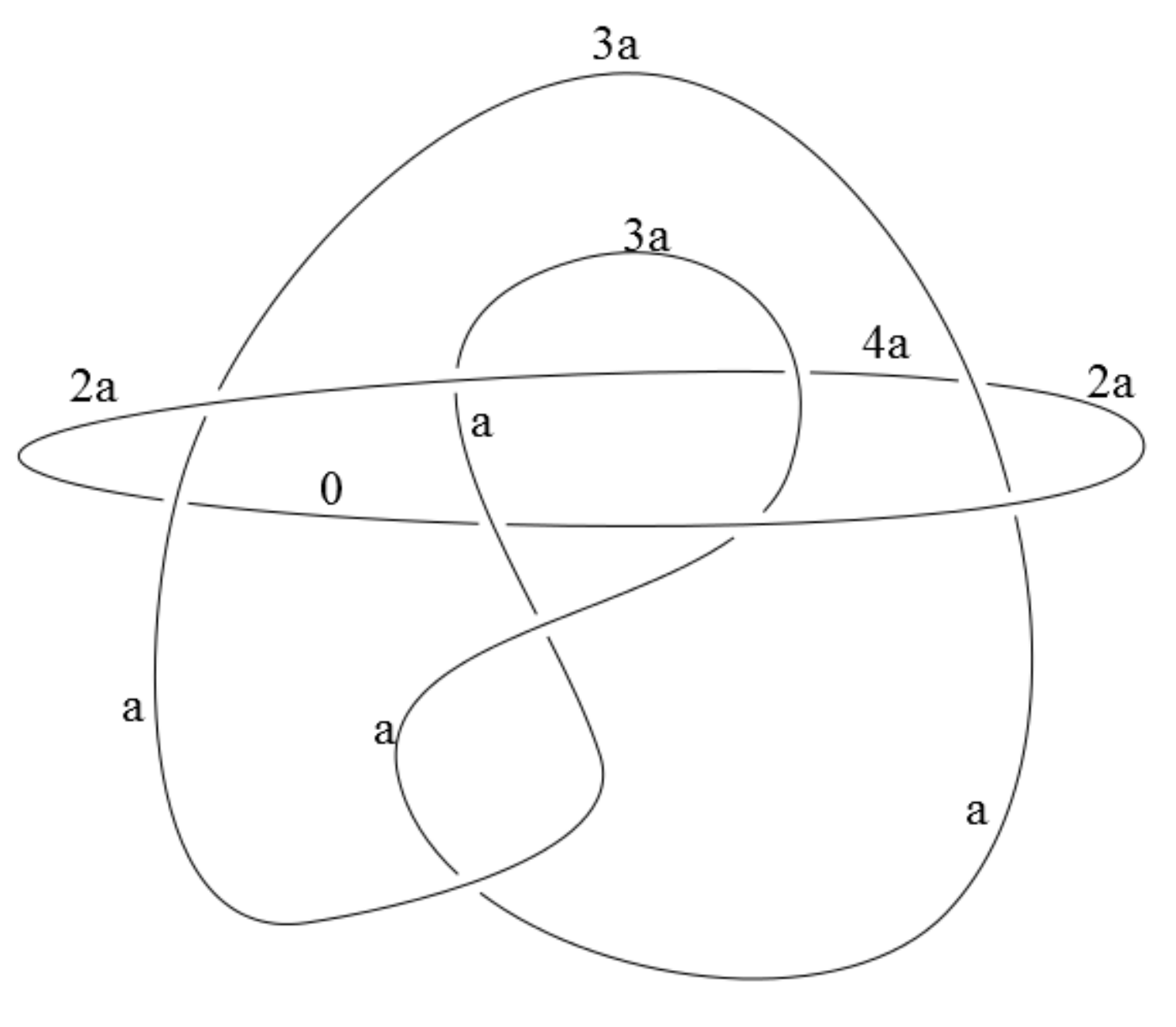}
\caption{$L10n57$}\label{L10n57}
\includegraphics[width=9cm,clip,bb=0 0 560 362]{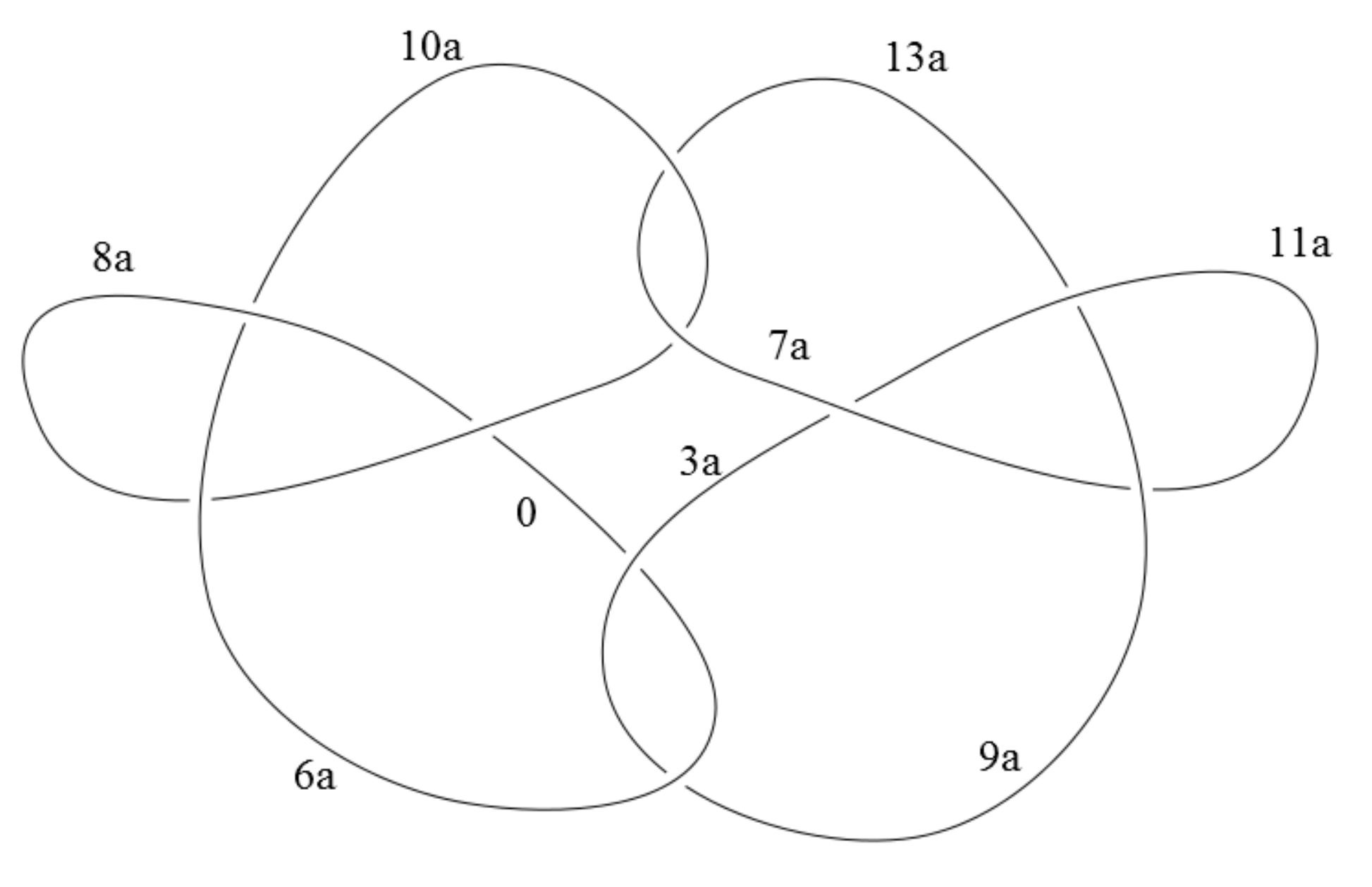}
\caption{$L10n59$ with non-simple $\mathbb{Z}$-coloring}\label{L10n59}
\includegraphics[width=9cm,clip,bb=0 0 600 450]{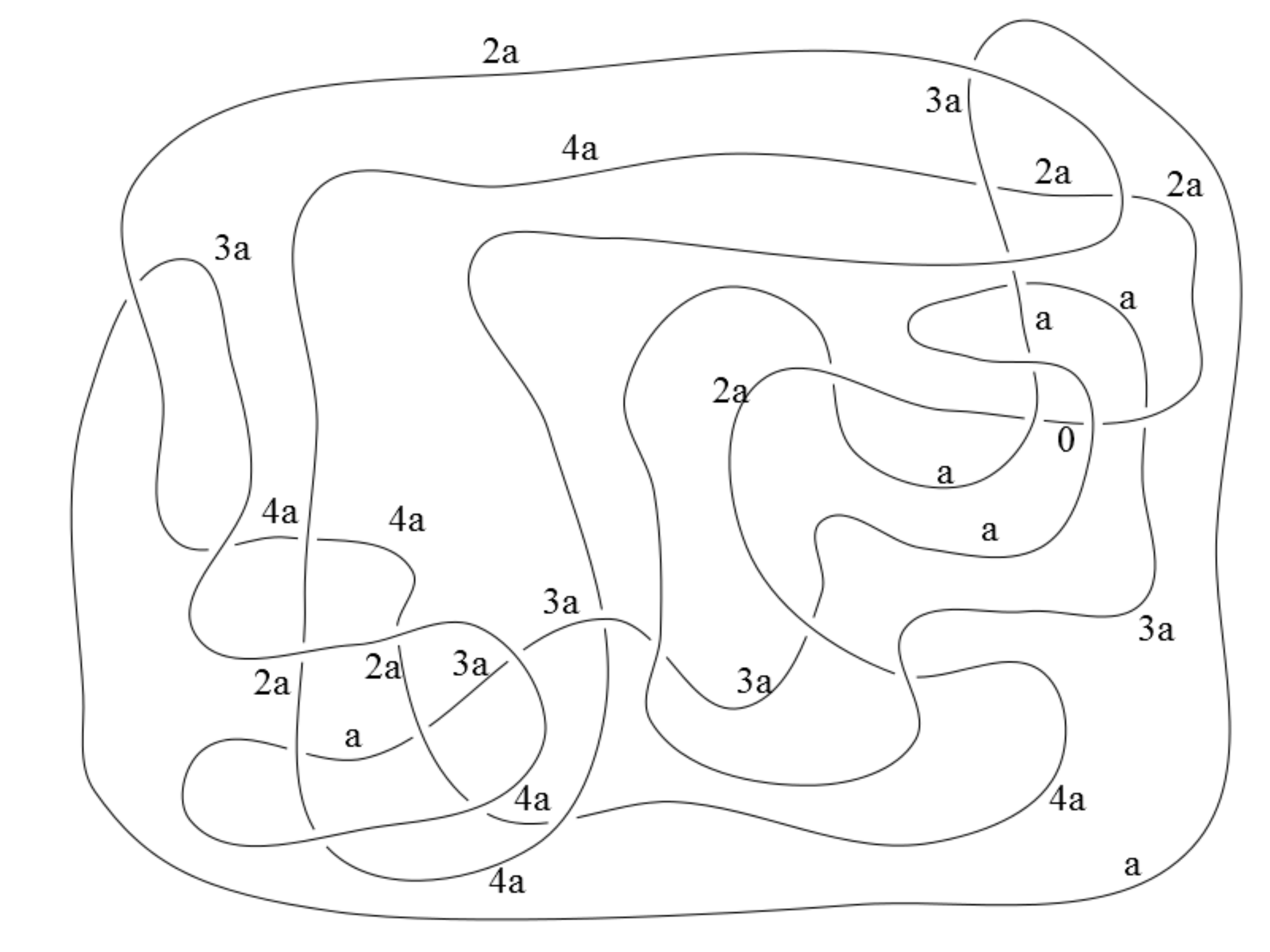}
\caption{$L10n59$ with simple $\mathbb{Z}$-coloring}\label{L10n59-2}
\end{center}
\end{figure}

\begin{figure}[H]
\begin{center}
\includegraphics[width=8cm,clip,bb=0 0 600 450]{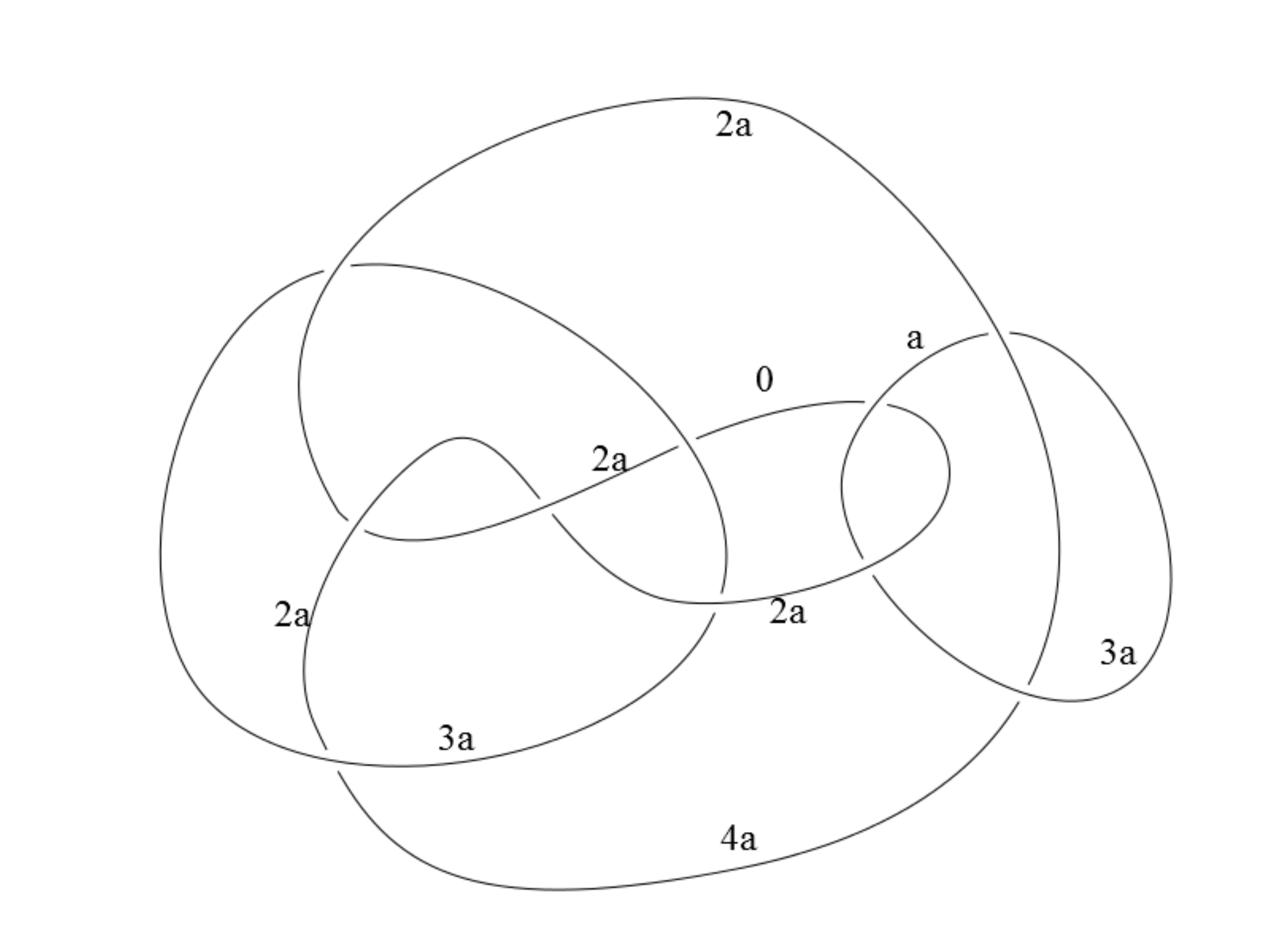}
\caption{$L10n91$}\label{L10n91}
\includegraphics[width=8cm,clip,bb=0 0 490 340]{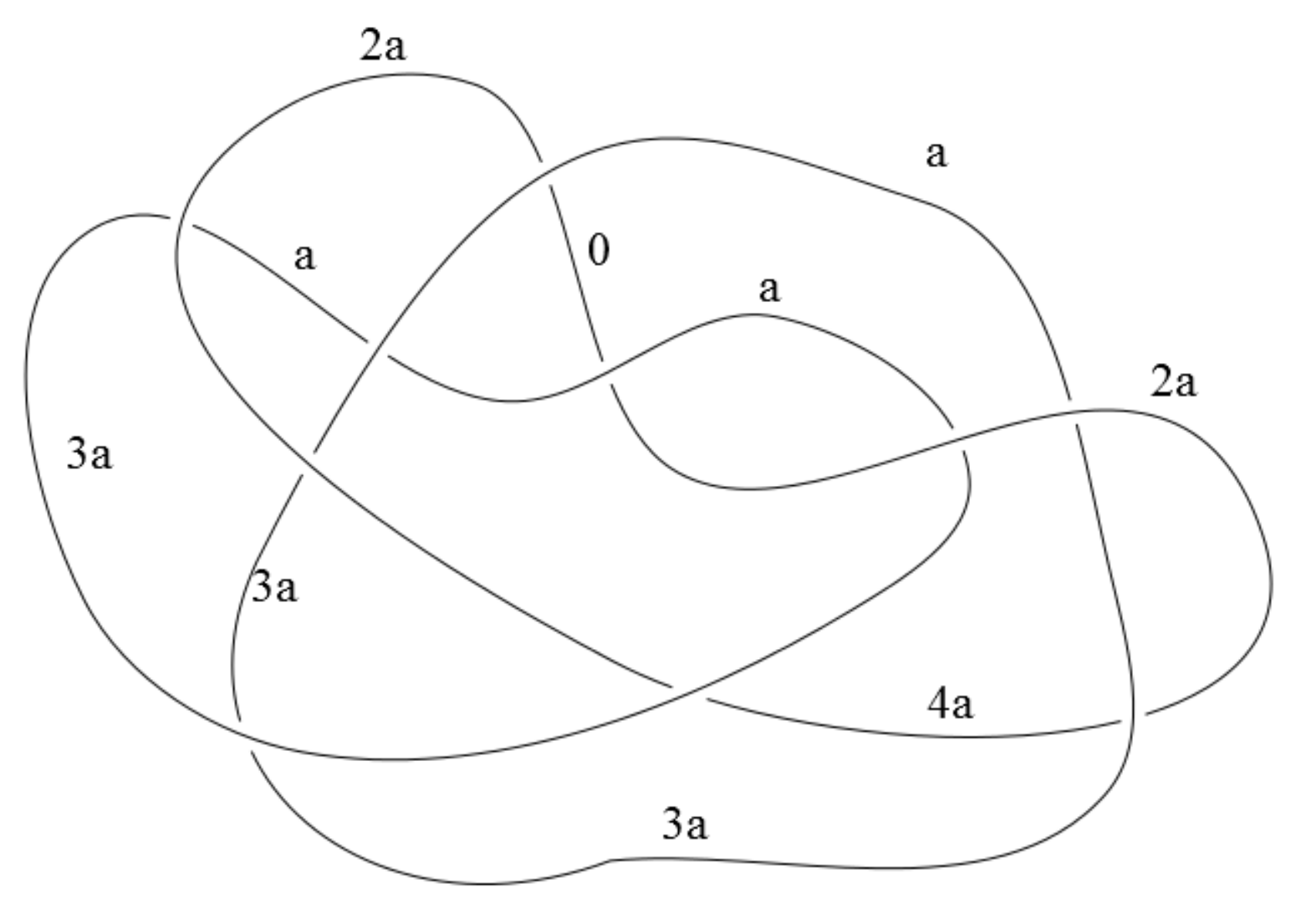}
\caption{$L10n93$}\label{L10n93}
\includegraphics[width=8cm,clip,bb=0 0 412 390]{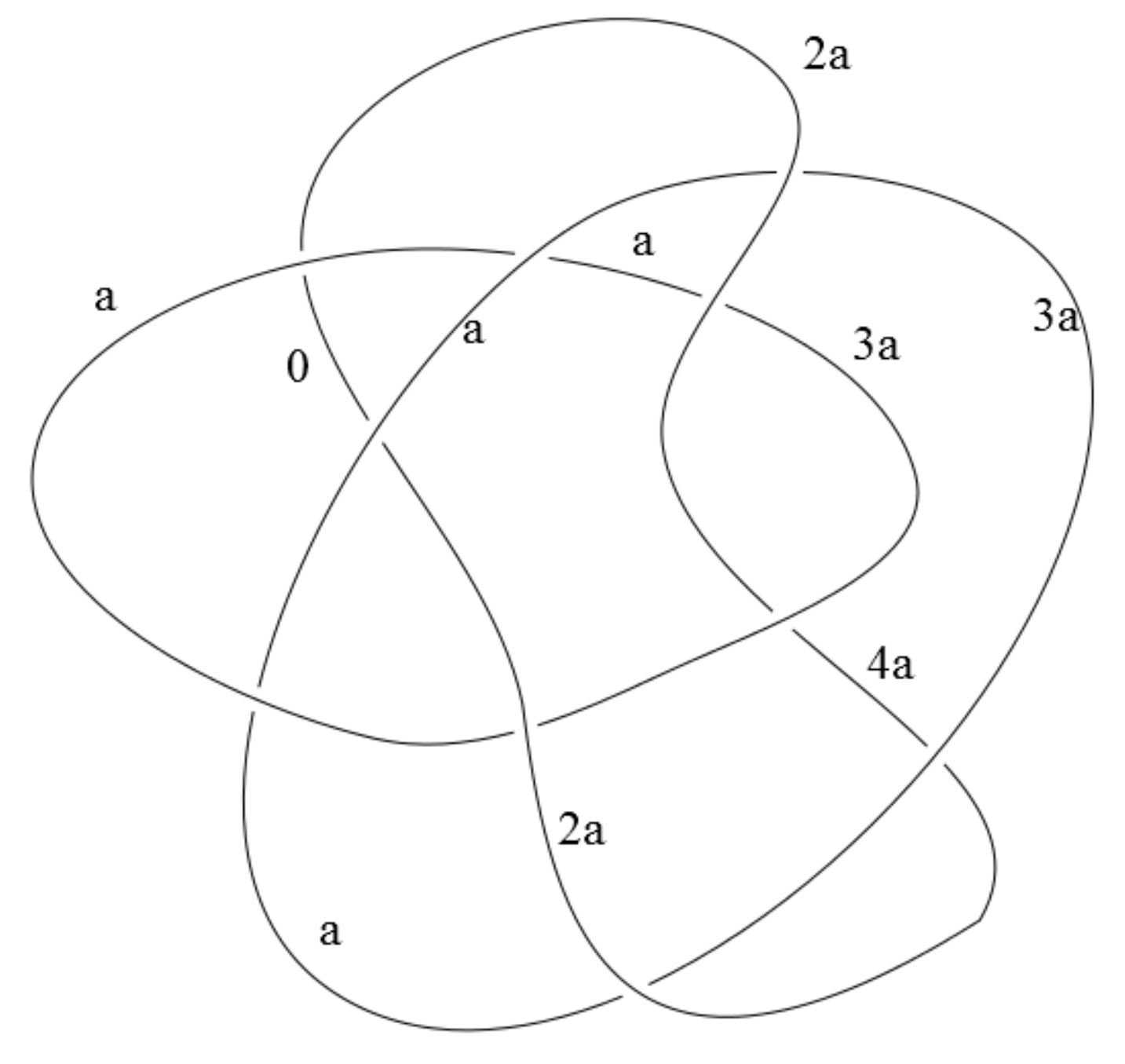}
\caption{$L10n94$}\label{L10n94}
\end{center}
\end{figure}

\begin{figure}[H]
\begin{center}
\includegraphics[width=8cm,clip,bb=0 0 442 346]{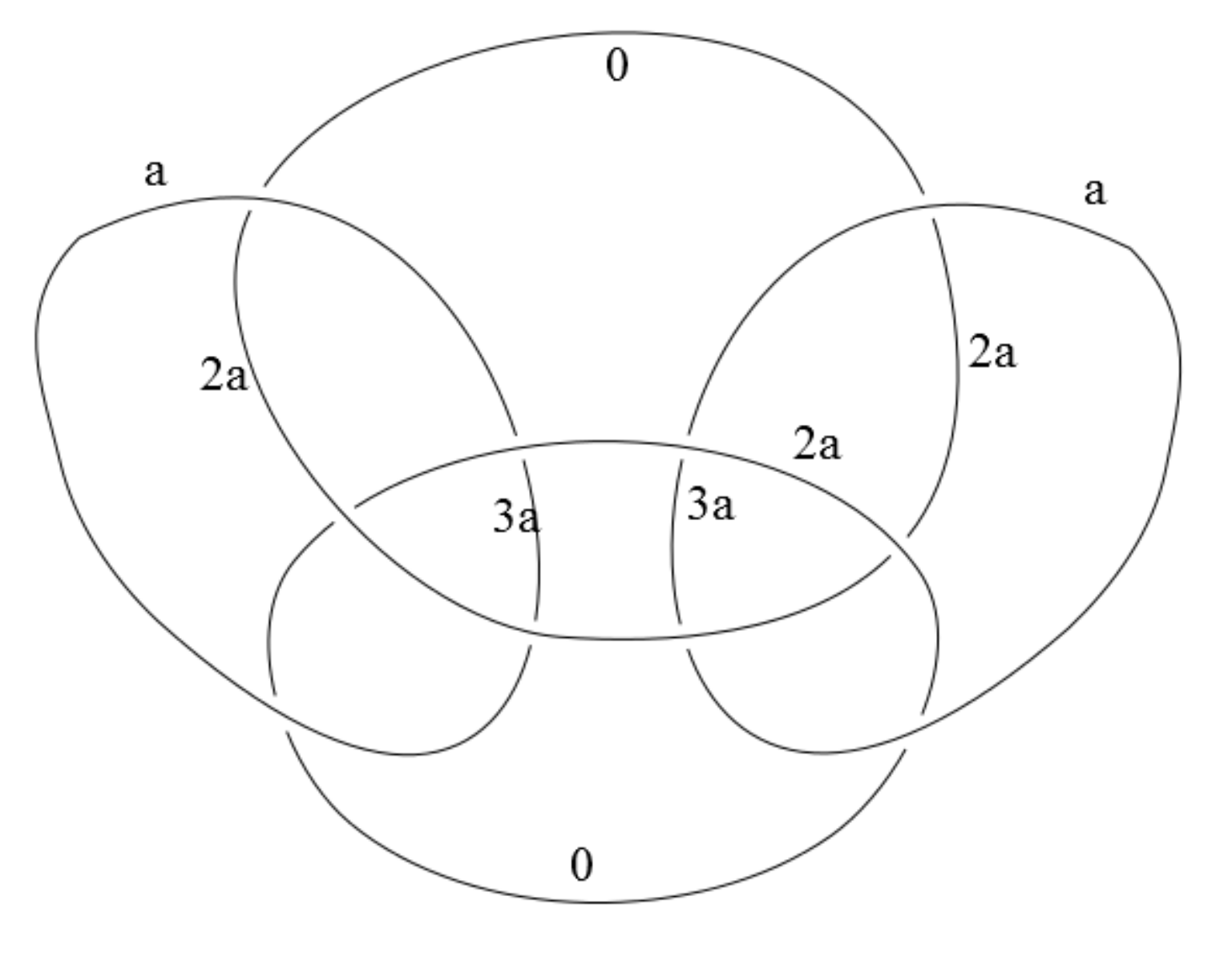}
\caption{$L10n104$}\label{L10n104}
\includegraphics[width=8cm,clip,bb=0 0 432 346]{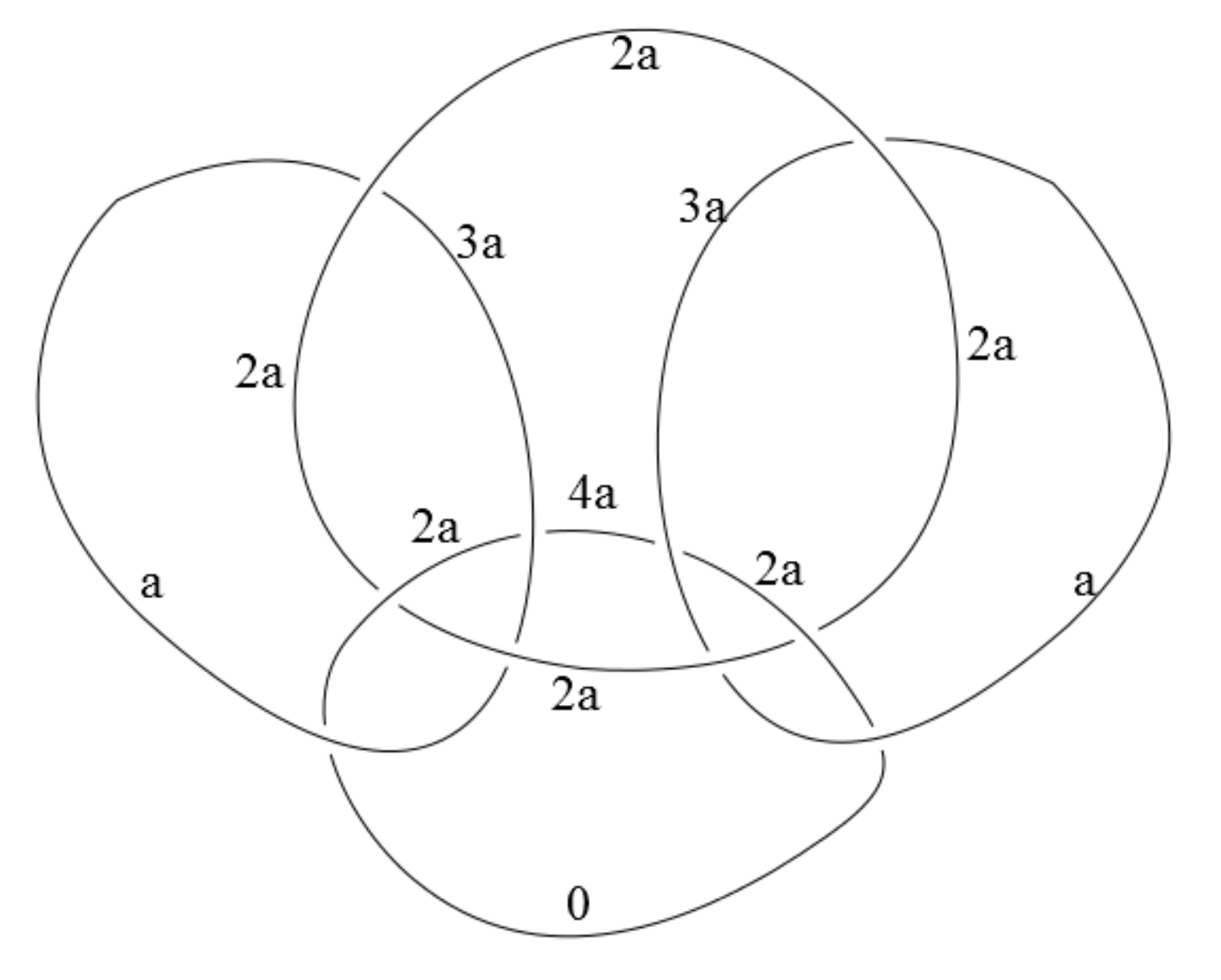}
\caption{$L10n107$}\label{L10n107}
\includegraphics[width=8cm,clip,bb=0 0 459 360]{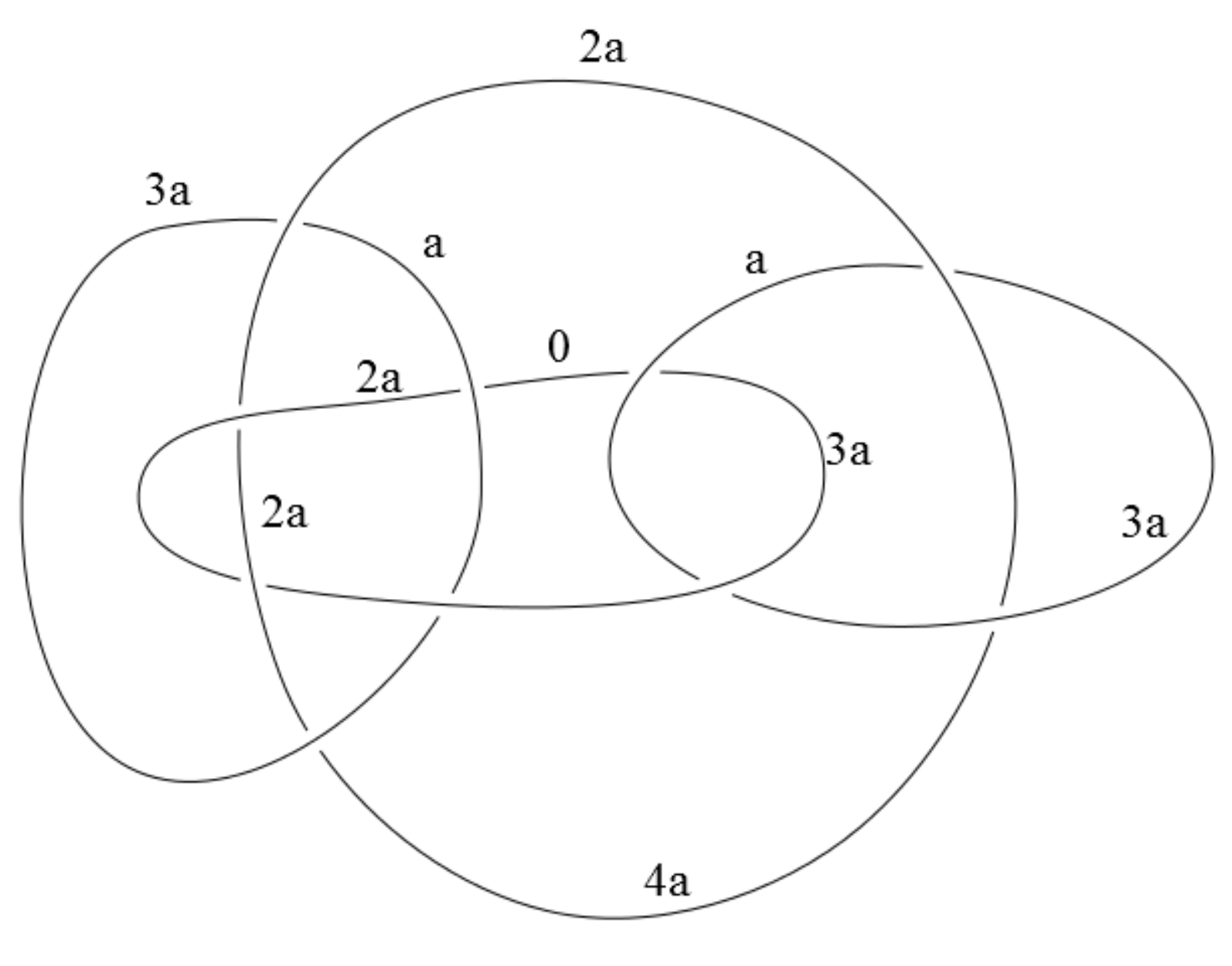}
\caption{$L10n111$}\label{L10n111}
\end{center}
\end{figure}

\section{Question}\label{sec:que}

In view of our studies in the previous sections, 
it is natural to ask;

\begin{question}\label{quest1}
Does mincol$_\mathbb{Z}(L)=4$ always hold for any non-splittable $\mathbb{Z}$-colorable link $L$?
\end{question}

This is equivalent to the next by virtue of Theorem \ref{simplethm}.

\begin{question}\label{quest2}
Does every non-splittable $\mathbb{Z}$-colorable link admit a simple $\mathbb{Z}$-coloring?
\end{question}

\section*{Acknowledgement}
The authors would like to thank Shin Satoh for his useful comments. 
The first author is partially supported by JSPS KAKENHI 
Grant Number 26400100.


\end{document}